\newtheorem{theorem}{Theorem}[section]
\newtheorem{lemma}[theorem]{Lemma}
\theoremstyle{remark}
\newtheorem{remark}[theorem]{Remark}
\theoremstyle{definition}
\newcommand{\R}{\mathbb{R}}
\newcommand{\EE}{\mathcal{E}}
\newcommand{\dx}{\,dx}
\newcommand{\ds}{\,ds}
\tikzstyle{nodino}=[circle,draw,fill,inner sep=0pt,minimum size=0.5mm]
\tikzstyle{infinito}=[circle,inner sep=0pt,minimum size=0mm]
\tikzstyle{nodo}=[circle,draw,fill,inner sep=0pt, minimum size=0.5*width("k")]
\tikzstyle{nodo_vuoto}=[circle,draw,inner sep=0pt, minimum size=0.5*width("k")]
\tikzset{every loop/.style={min distance=10mm,in=300,out=240,looseness=10}}
\tikzset{place/.style={circle,thick,draw=blue!75,fill=blue!20,minimum
		size=6mm}}
\tikzset{place2/.style={circle,thick,draw=red!75,fill=red!20,minimum
		size=6mm}}
\title[1D defocusing NLSE with nonlinear $\delta$-interactions]{Normalized solutions of one-dimensional defocusing NLS equations with nonlinear point interactions}
\author[ ]{Daniele Barbera}
\address[D. Barbera]{Politecnico di Torino, Dipartimento di Scienze Matematiche ``G.L. Lagrange'' Corso Duca degli Abruzzi 24, 10129, Torino, Italy.}
\email{daniele.barbera@polito.it}
\author[ ]{Filippo Boni}
\address[F. Boni]{Politecnico di Torino, Dipartimento di Scienze Matematiche ``G.L. Lagrange'' Corso Duca degli Abruzzi 24, 10129, Torino, Italy.}
\email{filippo.boni@polito.it}
\author[ ]{Simone Dovetta}
\address[S. Dovetta]{Politecnico di Torino, Dipartimento di Scienze Matematiche ``G.L. Lagrange'', Corso Duca degli Abruzzi 24, 10129, Torino, Italy.}
\email{simone.dovetta@polito.it}
\author[ ]{Lorenzo Tentarelli}
\address[L. Tentarelli]{Politecnico di Torino, Dipartimento di Scienze Matematiche ``G.L. Lagrange'', Corso Duca degli Abruzzi 24, 10129, Torino, Italy.}
\email{lorenzo.tentarelli@polito.it}
\begin{document}
	
	\begin{abstract}
		We investigate normalized solutions for doubly nonlinear Schr\"odinger equations on the real line with a defocusing standard nonlinearity and a focusing nonlinear point interaction of $\delta$--type at the origin.  We provide a complete characterization of existence and uniqueness for normalized solutions and for energy ground states for every value of the nonlinearity powers. We show that the interplay between a defocusing standard and a focusing point nonlinearity gives rise to new phenomena with respect to those observed with single nonlinearities, standard combined nonlinearities, and combined focusing standard and pointwise nonlinearities.
	\end{abstract}
	
	\maketitle

	\vspace{-.5cm}
	\noindent {\footnotesize {AMS Subject Classification:} 35Q40, 35Q55, 35R06, 49J40.}
	
	\noindent {\footnotesize {Keywords:} doubly nonlinear Schr\"odinger, nonlinear point interactions, normalized solutions, ground states.}
	
	\section{Introduction}
	
The present paper investigates normalized solutions of doubly nonlinear Schr\"odinger equations on the real line with a defocusing standard nonlinearity and a focusing point nonlinearity of $\delta$--type located at the origin, namely solutions of the problem
\begin{equation}
	\label{norm-nlse}
	\begin{cases}
	u''-|u|^{p-2}u+|u|^{q-2}\delta_0u=\lambda u & \text{ on }\R\\
	\|u\|_{L^2(\R)}^2=\mu\,, &
	\end{cases}
\end{equation}
where $\lambda\in\R$, $\mu>0$, and $2<p,q<\infty$. Equivalently, \eqref{norm-nlse} can be rewritten as the following nonlinear boundary value problem
\begin{equation}
	\label{nlse}
	\begin{cases}
	u''-|u|^{p-2}u=\lambda u & \text{on }\R\setminus\{0\}\\
	u'(0^-)-u'(0^+)=|u(0)|^{q-2}u(0) & \\
	\|u\|_{L^2(\R)}^2=\mu\,, &
	\end{cases}
\end{equation}
where $u'(0^-),u'(0^+)$ denote the left and right derivative of $u$ at the origin, respectively, and the nonlinear boundary condition in the second line encodes the point nonlinearity of $\delta$--type.

\smallskip

Even though the appearance of point perturbations of Schr\"odinger operators can be traced back to Fermi's work \cite{F36} in 1936, it is perhaps since the last decade of the previous century that nonlinear Schr\"odinger equations involving point interactions of $\delta$--type started gathering a prominent interest. Exploiting the presence of such singular point potentials to describe, e.g., strongly--localized defects in a medium or confinement effects in small spatial regions, models of this kind have been proposed by now for a wide range of phenomena in solid state and condensed matter physics (see for instance \cite{DM11,JLPS95,MB02,N98,SCMS20,SKBRC01} and references therein). From the technical point of view, delta--type terms have been implemented in nonlinear models mainly in two ways: either coupling the linear Schr\"odinger equation with a point nonlinearity of $\delta$--type $|u|^{q-2}\delta_0 u$ (see \cite{T23} for a comprehensive review on this approach and, in particular, \cite{ACCT20,ACCT21,ANO13,BD21,CFT19} for a specific focus on standing waves), or perturbing nonlinear Schr\"odinger equations involving standard nonlinearities by a linear delta--interaction $\delta_0 u$ (see, e.g., \cite{ABCT_cvpde,ABCT_jmp,AN09,AN13,ANV13,BC23,CFN21,CFN23,DSS25,FGI22,FJ08,FN23,FOO08,GHW04,HMZ07,KO09,LFFKS08}). Conversely, up to the last few years doubly nonlinear Schr\"odinger equations have been extensively studied with standard nonlinearities only (we refer for instance to \cite{CKS23,CS21,DT19,FH21,KOPV17,KO09,LCMR16,LRN20,M08,MZZ17,O95,PS22,ST_jde,ST_na,S20a,S20b,TVZ07,T16}). However, the recent works \cite{ABD22,BD21,BD22} started the investigation of doubly nonlinear models combining standard and point nonlinearities of $\delta$--type as in \eqref{norm-nlse} above. Precisely, these papers began the study of normalized solutions of focusing--focusing doubly nonlinear equations, i.e.
\[
u''+|u|^{p-2}u+|u|^{q-2}\delta_0u=\lambda u,
\] 
both on the real line and on metric graphs, where existence of fixed mass solutions is shown to be governed by a nontrivial interplay between the two nonlinear terms.

\smallskip
The present paper pushes forward the analysis of doubly nonlinear models with point nonlinearities of $\delta$--type, considering the case of one-dimensional Schr\"odinger equations \eqref{norm-nlse} with a defocusing standard nonlinearity combined with a focusing point nonlinearity of $\delta$--type based at the origin.  Observe that, since the standard nonlinearity in \eqref{nlse} is defocusing, it is well known that, without the point perturbation, there is no function in $L^2(\R)$ satisfying the first line of \eqref{nlse} on the whole $\R$. Hence, we aim at understanding whether an attractive nonlinear point perturbation at the origin can restore the existence of normalized solutions, and how this depends on the specific values of the two nonlinearity powers.  Even though combinations of defocusing-focusing standard nonlinearities have been studied for instance in \cite{CKS23,CS21,LRN20}, when turning to $\delta$--type interactions we are only aware of the investigations in \cite{KO09,DSS25}, where a linear point term (that is, $q=2$ in \eqref{norm-nlse}) is coupled with a defocusing standard nonlinearity on the real line \cite{KO09} and on metric graphs \cite{DSS25}. Note, furthermore, that both \cite{KO09} and \cite{DSS25} are mainly focused on the study of solutions of the equation for fixed $\lambda$ without the mass constraint, and only partial information is provided on normalized solutions in \cite{DSS25}. Hence, to the best of our knowledge, this is the first paper considering attractive nonlinear  perturbations of $\delta$--type of standard defocusing nonlinear Schr\"odinger equations.

Let us now state our main results, with which we develop a comprehensive description of problem \eqref{norm-nlse} for every values of $p,q>2$.  First, we provide a complete characterization of the set of (weak) solutions in $H^1(\R)$ of the doubly nonlinear problem
\begin{equation}
	\label{nls-nomass}
	\begin{cases}
	u''-|u|^{p-2}u=\lambda u & \text{on }\R\setminus\{0\}\\
	u'(0^-)-u'(0^+)=|u(0)|^{q-2}u(0) & 
	\end{cases}
\end{equation}
for every fixed value of $\lambda\in\R$, i.e. \eqref{nlse} without the mass constraint.
\begin{theorem}
	\label{thm:nomass}
	Let $p,q>2$. For every $\lambda\in\R$, if $u\in H^1(\R)$ is a nontrivial solution of \eqref{nls-nomass}, then $u$ is even and (up to a change of sign) positive and radially decreasing on $\R$. Moreover,

	\begin{enumerate}[label=(\roman*)]
		\item when $\lambda<0$, \eqref{nls-nomass} has no nontrivial solution in $H^1(\R)$;
		\smallskip
		
		\item when $\lambda=0$, \eqref{nls-nomass} has a positive solution in $H^1(\R)$ if and only if $p\in(2,6)$ and $ q\neq\frac p2+1$, and this solution is unique;
		
		\smallskip
		\item when $\lambda>0$,
		
		\smallskip
		\begin{enumerate}[label=(\alph*)]
			\item if $ q>\frac p2+1$, \eqref{nls-nomass} has a unique positive solution in $H^1(\R)$;
			
			\smallskip
			\item if $ q=\frac p2+1$, \eqref{nls-nomass} has a positive solution in $H^1(\R)$ if and only if $p>8$, and this solution is unique;
			
			\smallskip
			\item if $ q<\frac p2+1$, there exists $\overline{\lambda}_{p,q}>0$ such that \eqref{nls-nomass} has no nontrivial solution in $H^1(\R)$ when $\lambda>\overline{\lambda}_{p,q}$, it has a unique positive solution in $H^1(\R)$ when $\lambda=\overline{\lambda}_{p,q}$, and it has exactly two positive solutions in $H^1(\R)$ when $\lambda\in(0,\overline{\lambda}_{p,q})$.
		\end{enumerate}
	\end{enumerate}
\end{theorem}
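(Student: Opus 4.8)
The plan is to reduce the boundary-value problem to a planar autonomous ODE on each half-line, glued at the origin by the nonlinear jump condition, and finally to a single scalar equation for the value $c=u(0)$.

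First I would establish the qualitative structure of an arbitrary nontrivial solution. Away from the origin $u$ solves the autonomous equation $u''=\la u+|u|^{p-2}u$, which possesses the first integral $\tfrac12(u')^2-F(u)$ with $F(s)=\tfrac\la2 s^2+\tfrac1p|s|^p$. Since $u\in H^1(\R)$ forces $u(x)\to0$ and, along a sequence, $u'(x)\to0$ as $x\to\pm\infty$ (because $u,u'\in L^2$), the first integral must vanish on each half-line, giving $\tfrac12(u')^2=F(u)$ on $\R\setminus\{0\}$. Several facts then follow: a finite interior zero of $u$ would force $u'=0$ there as well (as $F(0)=0$) and hence, by uniqueness for the locally Lipschitz field $s\mapsto\la s+|s|^{p-2}s$ (here $p>2$ is used), the trivial solution; thus $u$ keeps a constant sign on each half-line and $u(0)\neq0$, and continuity at the origin fixes one global sign, so up to $u\mapsto-u$ we may assume $u>0$. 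When $\la<0$ the relation $\tfrac12(u')^2=F(u)$ is impossible in the decay region, since $F(s)<0$ for small $s>0$, which proves (i). When $\la\ge0$ one has $F(s)>0$ for $s\neq0$, so $u'$ never vanishes where $u\neq0$; hence $u$ is strictly monotone on each half-line, and together with the decay at infinity and the uniqueness of the flow $u'=\mp\sqrt{2F(u)}$ with datum $u(0)=c$ this yields that $u$ is even and radially decreasing.

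With the symmetry at hand, evenness gives $u'(0^-)=-u'(0^+)=\sqrt{2F(c)}$ with $c=u(0)>0$, and the jump condition $u'(0^-)-u'(0^+)=c^{q-1}$ becomes $2\sqrt{2F(c)}=c^{q-1}$, equivalently, after squaring and dividing by $c^2$,
\[
g(c):=c^{2q-4}-\tfrac{8}{p}\,c^{p-2}=4\la,\qquad c>0.
\]
Each admissible root $c$ corresponds to exactly one positive, even, radially decreasing solution, and distinct roots give distinct solutions, so the whole counting reduces to the study of $g$. Here I would record the integrability bookkeeping: for $\la>0$ the decay is exponential, so every root yields an $H^1$ solution, whereas for $\la=0$ the decay is only polynomial, $u(x)\sim x^{-2/(p-2)}$, so that $u\in L^2(\R)$ precisely when $p<6$, which is the origin of the restriction in (ii).

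The last step is the elementary analysis of $g$, whose two exponents compare exactly according to the sign of $q-\tfrac p2-1$, producing the three regimes of (iii). If $q>\tfrac p2+1$ then $g$ decreases from $0$ to a negative minimum and increases to $+\infty$, so $g=4\la$ has a unique root for every $\la>0$, giving (iii)(a); if $q<\tfrac p2+1$ then $g$ increases from $0$ to a positive maximum and decreases to $-\infty$, so defining $4\overline\la_{p,q}$ as that maximum yields two, one, or no roots as $\la$ lies in $(0,\overline\la_{p,q})$, equals $\overline\la_{p,q}$, or exceeds it, giving (iii)(c); and if $q=\tfrac p2+1$ then $g(c)=(1-\tfrac8p)c^{p-2}$, so a (unique) positive root exists iff $p>8$, giving (iii)(b). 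The case $\la=0$ is read off from $g(c)=0$, which has a unique root iff $q\neq\tfrac p2+1$, and intersecting with $p<6$ yields (ii). The main obstacle I anticipate lies not in this scalar analysis but in the rigorous first part: justifying the vanishing of the first integral from mere $H^1$ integrability, ruling out sign-changing, non-monotone, or finitely-decaying orbits so that the characterization is genuinely exhaustive, and pinning down the sharp threshold $p<6$ from the polynomial decay rate at $\la=0$.
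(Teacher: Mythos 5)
Your proposal is correct, and it reaches the same case analysis by a genuinely more direct reduction than the paper's. Where you evaluate the first integral $\tfrac12(u')^2=F(u)$ at the origin and turn the jump condition into the two--power scalar equation $c^{2q-4}-\tfrac8p c^{p-2}=4\lambda$ in the single unknown $c=u(0)$, the paper instead integrates the ODE explicitly, obtaining the closed--form profiles \eqref{eq:exp_ul>0} (for $\lambda>0$) and \eqref{eq:expul=0} (for $\lambda=0$), and then encodes the jump condition in the auxiliary variable $t=\coth\left(\tfrac{p-2}{2}\sqrt{\lambda}\,a\right)$, leading to the transcendental equation \eqref{eq:condt}, i.e. $f(t)=g(\lambda)$ with $f,g$ as in \eqref{eq:f e g}. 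The two scalar equations are equivalent under the substitution $c^{p-2}=\tfrac{p\lambda}{2}(t^2-1)$, and in both cases the trichotomy is driven by the sign of $q-\tfrac p2-1$, so the counting arguments are parallel; your version of the case analysis (unique critical point of $g$, sign of $1-\tfrac 8p$ on the diagonal $q=\tfrac p2+1$, and the $L^2$--threshold $p<6$ from the decay $u\sim x^{-2/(p-2)}$ at $\lambda=0$) is sound. What your route buys is economy: no special--function formulas, and the qualitative part (vanishing of the first integral via a sequence $u'(x_n)\to0$, exclusion of zeros by Cauchy--Lipschitz at a point where $u=u'=0$, monotonicity from non--vanishing of $u'$, evenness by uniqueness of the decreasing flow) is exactly the content of the paper's Lemma \ref{lem:statQual}. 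What it costs is the converse direction, which you correctly flag: each root $c$ must be shown to generate a genuine $H^1$ solution, which requires the shooting argument $u'=-\sqrt{2F(u)}$ together with the observation that $\int_0 du/\sqrt{2F(u)}=\infty$ (so the orbit never hits zero in finite time and decays exponentially for $\lambda>0$, polynomially for $\lambda=0$); the paper gets this for free by verifying its explicit formulas. Note also that the paper's explicit profiles are not a detour: they are reused heavily in Section \ref{sec:mass} to compute the mass $\mu(t)$ in \eqref{eq:mut}, so the coth parametrization does double duty, whereas your argument, optimized for Theorem \ref{thm:nomass} alone, would have to be supplemented later.
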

Theorem \ref{thm:nomass} spots for the first time two elements that will be crucial all along our discussion: the value $p=6$, as a critical threshold for the problem with $\lambda=0$, and the identity $ q=\frac p2+1$. In particular, we highlight that, even though this latter relation between $p$ and $q$ was already proved to be relevant for the focusing--focusing doubly nonlinear model on metric graphs in \cite{ABD22, BD22}, it was shown to play no role on the real line when both nonlinearities are focusing (at least in the case $2<p,q<6$, see \cite{BD21}). Here, on the contrary, the actual location of the point $(p,q)$ with respect to the line $q=\frac p2+1$ in the $pq$--plane determines a sharp transition for positive solutions of \eqref{nls-nomass}, that exist and are unique for every $\lambda>0$ when $(p,q)$ is above this line (Theorem \ref{thm:nomass}(iii-a)), whereas they exist only for values of $\lambda$ smaller than a positive threshold for $(p,q)$ below it (Theorem \ref{thm:nomass}(iii-c)), and uniqueness fails in this regime. We also highlight that, in view of \cite[Theorem 1]{KO09}, the lack of uniqueness identified in Theorem \ref{thm:nomass}(iii-c) is a purely doubly nonlinear effect. Indeed, \cite{KO09} considers the problem \eqref{nls-nomass} with a linear delta--potential ($q=2$) multiplied by a real parameter $\gamma$. For attractive interaction $\gamma>0$, it is proved therein that the problem admits a positive solution in $H^1(\R)$ if and only if $\lambda\in[0,\gamma^2/4)$ and $p\in(2,6)$ or $\lambda\in(0,\gamma^2/4)$ and $p>6$, but at fixed $\lambda$ this solution is always unique. This is in sharp contrast with Theorem \ref{thm:nomass}(iii-c), which ensures that, for every $p>2$ and $q$ sufficiently close to $2$ (depending on $p$), problem \eqref{nls-nomass} admits multiple positive solutions for a whole interval of values of $\lambda$. Observe also that, even though the existence of positive solutions occuring only for $\lambda$ below a threshold is independent of the $\delta$--interaction being linear or nonlinear, in the linear case the threshold is $\gamma^2/4$, that does not depend on $p$, whereas in our doubly nonlinear model the value $\overline{\lambda}_{p,q}$ is sensitive to both nonlinearities.

We then turn our attention to normalized solutions. Our first aim is to provide a complete portrait for existence of such solutions depending on the value of $\mu$. From this perspective, when in the following we say that problem \eqref{nlse} admits a normalized solution with mass $\mu>0$, we will mean that, given $\mu$, there exist $u\in H^1(\R)$ and $\lambda\geq0$ for which \eqref{nlse} is satisfied.

Since the behaviour of normalized solutions turns out to be quite varied depending on the values of $p$ and $q$, to ease the statement of the next theorem we introduce the following sets in the $pq$--plane (see Figure \ref{fig:piano_pq}):

\begin{itemize}
	
	\item $\displaystyle A = \left\{(p,q)\in\R^2\,:\, 2<p<6,\, 2<q<\frac p2+1\right\}$;
	
	\smallskip
	\item $\displaystyle B = \left\{(p,q)\in\R^2\,:\, p\geq6,\, 2<q<4\right\}$;
	
	\smallskip
	\item $\displaystyle C=\left\{(p,q)\in\R^2\,:\,p>6,\, 4< q<\frac p2+1\right\}$
	
	\smallskip
	\item $\displaystyle D=\left\{(p,q)\in\R^2\,:\, p\geq6,\, q>\frac p2+1\right\}$;
	
	\smallskip
	\item $\displaystyle E=\left\{(p,q)\in\R^2\,:\,2<p<6,\, q>4\right\}$
	
	\smallskip
	\item $\displaystyle F=\left\{(p,q)\in\R^2\,:\, 2 < p< 6,\, \frac p2+1 < q <4 \right\}$;
	
	\smallskip
	\item $\displaystyle G=\left\{(p,q)\in\R^2\,:\,2<p<6,\, q=4\right\}$;
	
	\smallskip
	\item $\displaystyle H=\left\{(p,q)\in\R^2\,:\, p>6,\, q=4\right\}$;
	
	\smallskip
	\item $\displaystyle I=\left\{(p,q)\in\R^2\,:\,p>2,\,q=\frac p2+1\right\}$.
	
\end{itemize}
Observe that these sets form a partition of the region $(2,\infty)\times(2,\infty)$ in the $pq$--plane.
\begin{figure}[t]
	\centering
	\includegraphics[width=0.7\textwidth]{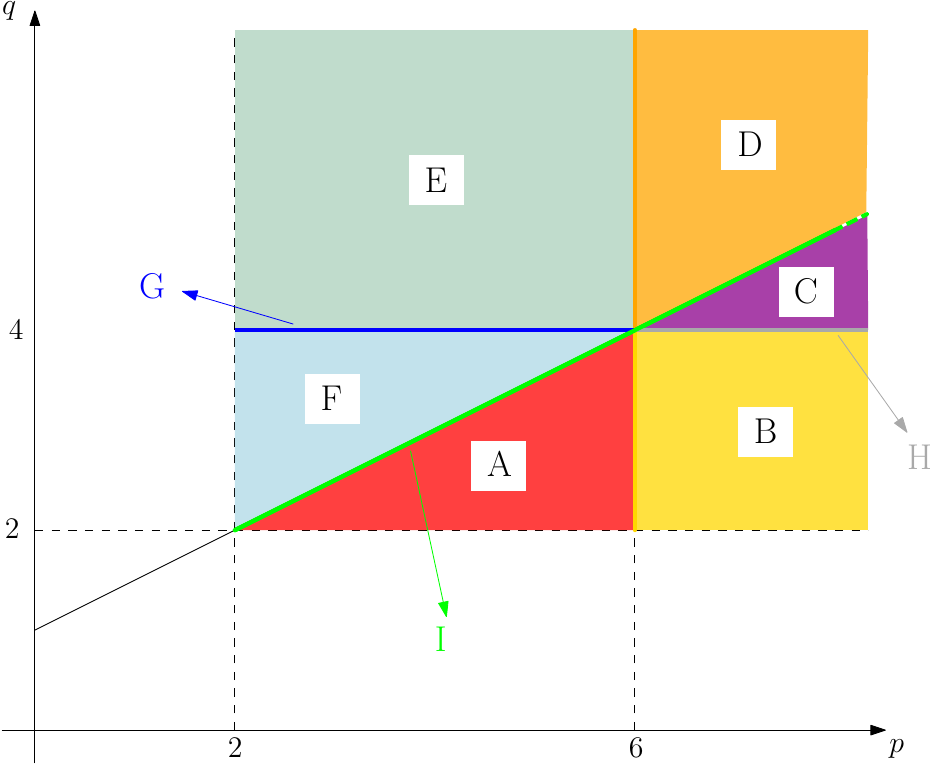}
	\caption{The subsets of the $pq$-plane identified by the straight lines $p=2$, $p=6$, $q=2$, $q=4$, and $q=\frac p2+1$.}
	\label{fig:piano_pq}
\end{figure}

\begin{theorem}
\label{thm:exsol}
Let $p,q>2$. The following holds:

\begin{enumerate}[label=(\roman*)]
	\item if $(p,q)\in A\cup E$, then there exists $\mu_{p,q}>0$ such that \eqref{nlse} admits a solution if and only if $\mu\leq\mu_{p,q}$;
	
	\smallskip
	\item if $(p,q)\in B\cup D$, then \eqref{nlse} admits a solution for every $\mu>0$;
	
	\smallskip
	\item if $(p,q)\in C\cup F$, then there exists $\mu_{p,q}>0$ such that \eqref{nlse} admits a solution if and only if $\mu\geq\mu_{p,q}$;
	
	\smallskip
	\item if $(p,q)\in G$, then there exists $\mu_{p,4}>2$ such that \eqref{nlse} admits a solution if and only if $\mu\in(2,\mu_{p,4}]$;
	
	\smallskip
	\item if $(p,q)\in H$, then \eqref{nlse} admits a solution if and only if $\mu>2$;
	
	\smallskip
	\item if $(p,q)\in I$, then \eqref{nlse} has no solution for every $\mu>0$ when $p\leq8$, whereas it admits a solution for every $\mu>0$ when $p>8$.
\end{enumerate}
Furthermore, if $p,q,\mu$ are as in (i), (ii), (iv), (v) and (vi), up to a change of sign the solution of \eqref{nlse} is unique.  
\end{theorem}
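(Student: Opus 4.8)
The plan is to reduce the entire statement to the study of a single scalar function—the mass of the solution as a function of its value at the origin—and then to read off each regime from the graph of this function. By Theorem \ref{thm:nomass}, every nontrivial solution of \eqref{nlse} with $\lambda\geq0$ is even and, up to sign, positive and radially decreasing, so it is completely determined by $a:=u(0)>0$. On $(0,\infty)$ the equation $u''=\lambda u+|u|^{p-2}u$ admits the first integral $(u')^2=\lambda u^2+\frac2p u^p$ (the constant vanishes since $u,u'\to0$ at infinity), while the jump condition together with evenness gives $u'(0^+)=-\frac12 a^{q-1}$. Evaluating the first integral at $0^+$ then forces the constraint
\[
\lambda=\lambda(a):=\tfrac14 a^{2q-4}-\tfrac2p a^{p-2},
\]
and, since $u'=-u\sqrt{\lambda+\frac2p u^{p-2}}$, the change of variables $x\mapsto u$ yields the mass
\[
\mu(a)=2\int_0^a\frac{u\,du}{\sqrt{\lambda(a)+\tfrac2p u^{p-2}}}.
\]

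First I would put this in normal form. The admissibility condition $\lambda(a)\geq0$ reads $a^{2q-p-2}\geq 8/p$, which—according to the sign of $2q-p-2$, i.e. of $q-(\tfrac p2+1)$—confines $a$ to $(0,a_*]$ when $q<\tfrac p2+1$ and to $[a_*,\infty)$ when $q>\tfrac p2+1$, with $a_*:=(8/p)^{1/(2q-p-2)}$, the degenerate line $q=\tfrac p2+1$ being governed by the sign of $p-8$. Rescaling $u=as$ in the integral gives
\[
\mu(a)=4\,a^{4-q}\,\Phi(\kappa(a)),\qquad \Phi(\kappa):=\int_0^1\frac{s\,ds}{\sqrt{1-\kappa(1-s^{p-2})}},\qquad \kappa(a):=\tfrac8p a^{p-2q+2}\in(0,1],
\]
where $\Phi$ is continuous and strictly increasing on $[0,1)$ with $\Phi(0)=\tfrac12$ and $\lim_{\kappa\to1^-}\Phi(\kappa)=\tfrac2{6-p}$ if $p<6$ and $+\infty$ if $p\geq6$ (this is where the threshold $p=6$ enters). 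Since $a\mapsto u_a$ is injective through $u_a(0)=a$, a normalized solution of mass $\mu$ exists precisely when $\mu$ lies in the image of $a\mapsto\mu(a)$ over the admissible range, and it is unique precisely when this map is injective there.

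The proof then becomes a case analysis of $\mu(a)=4a^{4-q}\Phi(\kappa(a))$. Both factors are monotone in $a$: $a^{4-q}$ increases or decreases according to whether $q<4$ or $q>4$ (hence the threshold $q=4$), while $\Phi(\kappa(a))$ increases or decreases according to whether $q<\tfrac p2+1$ or $q>\tfrac p2+1$. When the two factors are co-monotone—which is exactly the complement of $C\cup F$—their product is strictly monotone, so the image is an interval and the solution is unique; evaluating the endpoint limits (using $\Phi(0)=\tfrac12$, whence the value $2$ appearing when $q=4$, and the limit $\lim_{\kappa\to1^-}\Phi$) pins down the thresholds $\mu_{p,q}=\tfrac{8}{6-p}a_*^{4-q}$ and so on, yielding cases (i), (ii), (iv), (v). The line $q=\tfrac p2+1$ is treated separately: there $\kappa\equiv\tfrac8p$ is constant, so $\mu(a)=4\Phi(\tfrac8p)\,a^{4-q}$ is strictly monotone when $p>8$ and identically $+\infty$ when $p\leq8$, which is case (vi).

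The delicate regimes are $C=\{p>6,\,4<q<\tfrac p2+1\}$ and $F=\{p<6,\,\tfrac p2+1<q<4\}$, where the two factors are anti-monotone and $\mu(a)$ is genuinely non-monotone; this is the main obstacle, and it is the reason uniqueness is not asserted for $C\cup F$. Rather than studying $\mu'$ directly—whose sign is delicate and can change sign within $F$—I would argue by continuity and coercivity: in region $C$ one has $\mu(a)\to+\infty$ at both ends of the admissible interval (from $a^{4-q}\to\infty$ as $a\to0^+$ and from $\Phi\to\infty$ as $a\to a_*^-$, since $p>6$), while in region $F$ one has $\mu(a)\to+\infty$ as $a\to\infty$ and $\mu$ finite at $a_*$. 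In either case $\mu$ is continuous and coercive at the free end(s), so it attains a global minimum $\mu_{p,q}$ and its image is exactly $[\mu_{p,q},\infty)$, which is case (iii). This coercivity argument delivers the existence threshold without requiring the full monotonicity profile, and the absence of a uniqueness claim in (iii) reflects precisely the fact that $\mu(a)$ need not be injective there.
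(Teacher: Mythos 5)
Your proposal is correct, and it reaches the conclusion by a route that differs from the paper's in its key technical step. The overall skeleton is the same: both arguments use the qualitative description from Lemma \ref{lem:statQual}/Theorem \ref{thm:nomass} to reduce the problem to a one-parameter family of positive solutions, then combine continuity of the mass along this family with endpoint asymptotics, invoking strict monotonicity where available and only coercivity in $C\cup F$ (your coercivity treatment of region (iii) is essentially identical to the paper's, which uses Lemma \ref{lem:mu_as} plus continuity). The difference lies in the parametrization and, consequently, in how monotonicity is obtained. The paper parametrizes solutions by $t=\mathrm{coth}\left(\frac{p-2}{2}\sqrt{\lambda}\,a\right)\in(1,+\infty)$ via \eqref{eq:exp_ul>0}--\eqref{eq:condt}, writes the mass as in \eqref{eq:mut}, and must then prove strict monotonicity through Lemma \ref{lem:mon_mu}, a delicate case-by-case analysis of the auxiliary function $h(t)$ involving integrations by parts and sharp inequalities. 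You parametrize instead by $a=u(0)$ (the two variables are linked by the monotone change $\kappa=1-t^{-2}$), and the payoff is the factorization $\mu(a)=4a^{4-q}\Phi(\kappa(a))$ with $\Phi$ manifestly continuous and strictly increasing on $[0,1)$: in the complement of $C\cup F$ the two factors are co-monotone, so strict monotonicity of $\mu$ is immediate, and every structural constant of the theorem is read off transparently ($q=4$ from the prefactor $a^{4-q}$, $q=\frac p2+1$ from the exponent in $\kappa$, $p=6$ from finiteness of $\Phi(1)=\frac{2}{6-p}$, the mass $2$ from $4\Phi(0)$; your threshold $\frac{8}{6-p}\,a_*^{4-q}$ agrees with $\mu_0$ in \eqref{eq-mu0}). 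This replaces the hardest lemma of the paper's proof by a triviality, which is a genuine gain. Two small imprecisions, neither a gap: in case (vi) with $p<8$ the correct statement is not that $\mu\equiv+\infty$ but that $\lambda(a)<0$ for every $a>0$, so there is no admissible $a$ at all (only at $p=8$ is the mass genuinely infinite, since the $\lambda=0$ profile fails to lie in $L^2(\R)$ for $p\geq6$); similarly, for $p\geq6$ the endpoint $a=a_*$ should be excluded from the admissible range, which is harmless because $\mu(a)\to+\infty$ there in your formula.
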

Theorem \ref{thm:exsol} identifies in the straight lines $p=6$, $q=4$, and $q=\frac p2+1$ the edges separating the regimes of nonlinearities where the behaviour of normalized solutions is sensibly different. On the one hand, the value $p=6$ governs the existence of large mass solutions. Indeed, for any fixed $q>2$, Theorem \ref{thm:exsol} shows that crossing $p=6$ (that is, passing from regions A to B, G to H, or E to D) restores existence of solutions with large $\mu$. This transition is rooted in the lack of nontrivial solutions in $H^1(\R)$ of \eqref{nls-nomass} with $\lambda=0$ for every $p\geq6$, as stated in Theorem \ref{thm:nomass}(ii). Indeed, as shown in Section \ref{sec:mass} below, for every $p<6$ the critical value $\mu_{p,q}$ in Theorem \ref{thm:exsol}(i)--(iv) is exactly the mass of the unique positive solution in $H^1(\R)$ of \eqref{nls-nomass} with $\lambda=0$ (computed in \eqref{eq-mu0}). On the other hand, the existence of solutions with small mass is determined by $q=4$ and $q=\frac p2+1$: in the regions enclosed between the two straight lines (C, F, G and H) no such solution exist, whereas they do exist in the outer regions (A, B, D and E).

Furthermore, we point out that the lack of a discussion on the uniqueness of normalized solutions when $p,q,\mu$ are as in Theorem \ref{thm:exsol}(iii) is not accidental. In fact, in this regime uniqueness fails to be true in general.
\begin{theorem}
	\label{thm:nonun}
	There exist $p,q,\mu$ as in Theorem \ref{thm:exsol}(iii) for which \eqref{nlse} admits two different solutions.
\end{theorem}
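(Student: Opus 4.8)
The plan is to reduce \eqref{nls-nomass} to an explicit one-parameter family of profiles and then exhibit non-injectivity of the associated mass map inside region $C$, which is contained in $C\cup F$. By Theorem \ref{thm:nomass}, any nontrivial solution is, up to sign, positive, even and radially decreasing, hence completely determined by its peak $a:=u(0)>0$. On $(0,\infty)$ the equation $u''=\lambda u+u^{p-1}$ admits the first integral $(u')^2=\lambda u^2+\frac2p u^p$ (the constant vanishes since $u,u'\to0$ at infinity), and combining $u'(0^+)=-\sqrt{\lambda a^2+\frac2p a^p}$ with the jump condition $-2u'(0^+)=a^{q-1}$ (using evenness) yields the peak--frequency relation
\begin{equation}
\lambda=\Lambda(a):=\tfrac14 a^{2(q-2)}-\tfrac2p a^{p-2}.
\end{equation}
Admissibility of a normalized solution forces $\lambda>0$, that is $a\in(0,a_0)$ with $a_0:=(p/8)^{1/(p-2q+2)}$ (here $p-2q+2>0$ because $(p,q)\in C$). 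Along this branch the mass is the explicit, continuous (indeed smooth) function
\begin{equation}
\mu(a)=2\int_0^a \frac{s^2\,ds}{\sqrt{\Lambda(a)\,s^2+\frac2p s^p}},\qquad a\in(0,a_0),
\end{equation}
so producing two solutions of a single normalized problem reduces to finding $a'\neq a''$ in $(0,a_0)$ with $\mu(a')=\mu(a'')$.

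Next I would analyse the two boundary limits of $\mu$. As $a\to0^+$, the rescaling $s=a\sigma$ together with $2(q-2)<p-2$ (valid in $C$) shows that the frequency term $\Lambda(a)s^2$ dominates $\frac2p s^p$ throughout the integral, giving $\mu(a)\sim 2a^{4-q}$; since $q>4$ in $C$, this forces $\mu(a)\to+\infty$. As $a\to a_0^-$ one has $\lambda=\Lambda(a)\to0^+$ while the peak stays bounded, so the integrand converges pointwise to $\sqrt{p/2}\,s^{2-p/2}$, whose integral near $s=0$ diverges precisely because $p>6$ gives $2-p/2<-1$; splitting the integral at the crossover scale $s_*\sim\lambda^{1/(p-2)}$ confirms $\mu(a)\to+\infty$ here as well. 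Thus $\mu$ is continuous and positive on $(0,a_0)$ and blows up at both endpoints, hence attains an interior global minimum $\mu_{p,q}:=\min_{(0,a_0)}\mu>0$ at some $\overline a\in(0,a_0)$.

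To conclude, I would fix any $(p,q)\in C$ and any $\mu>\mu_{p,q}$. Applying the intermediate value theorem on each side of $\overline a$ produces two distinct peaks $a'\in(0,\overline a)$ and $a''\in(\overline a,a_0)$ with $\mu(a')=\mu(a'')=\mu$, whose corresponding profiles are two genuinely different positive normalized solutions of \eqref{nlse} (with, in general, distinct frequencies $\lambda$). Since $C\subset C\cup F$ and $\mu_{p,q}$ is exactly the threshold mass appearing in Theorem \ref{thm:exsol}(iii), these data $(p,q,\mu)$ are admissible for that case, which establishes the claim.

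The hard part will be the rigorous justification of the limit $a\to a_0^-$ (equivalently $\lambda\to0^+$): one must quantify the $L^2$-blow-up of the profile as it degenerates towards the zero-frequency ground state of $u''=u^{p-1}$, which fails to be square integrable exactly when $p>6$. This requires controlling the transition between the algebraically decaying core (where $\frac2p s^p$ dominates) and the exponentially decaying tail (where $\lambda s^2$ dominates) uniformly as $\lambda\to0$. By contrast, the $a\to0^+$ asymptotics is a routine scaling computation, and the final non-injectivity is an immediate consequence of continuity and the two divergences.
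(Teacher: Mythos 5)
Your proposal is correct, and it implements exactly the mechanism the paper uses: the paper's proof consists of citing Remark \ref{rem:nomon_mu}, which observes that in region $C$ (i.e. $p>6$, $4<q<\frac p2+1$) the mass map along the solution branch diverges at \emph{both} endpoints (by Lemma \ref{lem:mu_as}, $\mu(t)\to+\infty$ as $t\to1^+$ and as $t\to+\infty$), so it cannot be injective and two distinct solutions must share a mass. The only difference is bookkeeping: you parametrize the branch by the peak $a=u(0)\in(0,a_0)$ and re-derive the mass asymptotics directly from the integral $\mu(a)=2\int_0^a s^2\bigl(\Lambda(a)s^2+\tfrac2p s^p\bigr)^{-1/2}\,ds$, whereas the paper parametrizes by $t=\coth\bigl(\tfrac{p-2}{2}\sqrt{\lambda}\,a\bigr)$ and quotes Lemma \ref{lem:mu_as}; your endpoints $a\to0^+$ and $a\to a_0^-$ correspond precisely to $t\to1^+$ and $t\to+\infty$, and the roles of $q>4$ and $p>6$ in producing the two divergences match the paper's exponents $(q-4)/(2q-p-2)<0$ and $(p-6)/(p-2)>0$. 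One simplification you missed: the step you flag as ``the hard part'' (the limit $a\to a_0^-$) needs no uniform matching of core and tail regimes, because you only need divergence, not an asymptotic rate; since the integrand increases pointwise as $\Lambda(a)$ decreases to $0$, Fatou's lemma (or monotone convergence) immediately gives $\liminf_{a\to a_0^-}\mu(a)\geq 2\sqrt{p/2}\int_0^{a_0}s^{2-p/2}\,ds=+\infty$ when $p>6$. Finally, note that the paper's remark also treats region $F$ (via an explicit non-monotonicity computation at $p=4$), but since the theorem is purely existential your restriction to $C$ is enough.
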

Recall that, as it is well-known, $u\in H^1(\R)$ solves \eqref{nlse} if and only if $u$ is a critical point of the associated energy functional $E_{p,q}:H^1(\R)\to\R$
\begin{equation}
	\label{E}
	E_{p,q}(v):=\frac12\|v'\|_{L^2(\R)}^2+\frac1p\|v\|_{L^p(\R)}^p-\frac1q|v(0)|^q
\end{equation}
constrained to the set of functions with mass $\mu$
\begin{equation}
\label{eq-vincolo}
H_\mu^1(\R):=\left\{v\in H^1(\R)\,:\,\|v\|_{L^2(\R)}^2=\mu\right\},
\end{equation}
with the parameter $\lambda$ in \eqref{nlse} popping up as a Lagrange multiplier associated to the mass constraint. In particular, among all critical points of the energy, a specific interest is usually devoted to the ground states, defined as global minimizers of $E_{p,q}$ in $H_\mu^1(\R)$, i.e. functions in $H_\mu^1(\R)$ that attain
\begin{equation}
	\label{Elevel}
	\EE_{p,q}(\mu):=\inf_{u\in H_\mu^1(\R)}E_{p,q}(u)\,.
\end{equation}
Observe that, before wondering whether ground states exist, one should understand for which values of $p,q$ and $\mu$ the ground state energy level $\EE_{p,q}(\mu)$ is finite. In our setting, this is not evident due to the opposite signs of the two nonlinearities. The next theorem actually shows that the proper balance of the nonlinear terms for $\EE_{p,q}$ to be finite is determined again by the relation between $q$ and $\frac p2+1$.
\begin{theorem}
	\label{thm:propE}
	For every $p,q>2$ the ground state energy level $\EE_{p,q}:[0,+\infty)\to\R\cup\{-\infty\}$ is non--positive and non--increasing on $[0,+\infty)$. Moreover,

	\begin{enumerate}[label=(\roman*)]
		\item if $ q<\max\left\{4,\frac p2+1\right\}$, then $\EE_{p,q}(\mu)>-\infty$ for every $\mu>0$;
		
		\smallskip
		\item if $p\in(2,6)$ and $q=4$, then
		\[
		\EE_{p,4}(\mu)=\begin{cases}
		0, & \text{if }\mu\in(0,2],\\
		-\infty, & \text{if }\mu>2;
		\end{cases}
		\]
		
		\smallskip
		\item if $ q>\max\left\{4,\frac p2+1\right\}$, then $\EE_{p,q}(\mu)=-\infty$ for every $\mu>0$.
	\end{enumerate}
	Finally, if $q\neq\frac{p}{2}+1$ or $p<6$, then $\EE_{p,q}$ is continuous on every interval where it is finite
\end{theorem}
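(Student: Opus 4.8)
The plan is to prove the four assertions in the order: monotonicity and sign, the finiteness dichotomy (parts (i) and (iii)), the borderline line $q=4$ (part (ii)), and finally continuity.

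\emph{Monotonicity and non-positivity.} First I would show $\EE_{p,q}$ is non-increasing by a free-mass-at-infinity construction. Given $0\le\mu_1<\mu_2$ and $u\in H^1_{\mu_1}(\R)$ with $E_{p,q}(u)\le\EE_{p,q}(\mu_1)+\eps$, I truncate $u$ to compact support at $o(1)$ cost in energy, then glue, on an interval far from the origin and disjoint from $\mathrm{supp}\,u$, a broad low plateau of width $L$ and height of order $L^{-1/2}$ carrying the missing mass. As $L\to\infty$ this plateau's kinetic and $L^p$ energies vanish, and, its support avoiding $0$, it does not alter the value at the origin; disjoint supports cancel all cross terms, so the glued function lies in $H^1_{\mu_2}(\R)$ with energy $E_{p,q}(u)+o(1)$. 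Letting $o(1)\to0$ and then $\eps\to0$ gives $\EE_{p,q}(\mu_2)\le\EE_{p,q}(\mu_1)$. Since $H^1_0(\R)=\{0\}$ forces $\EE_{p,q}(0)=0$, monotonicity yields $\EE_{p,q}(\mu)\le0$ for every $\mu\ge0$.

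\emph{Finiteness dichotomy.} For (i) I would bound the sole negative term by the two positive ones. Integrating $\frac{d}{dx}|v|^{\frac p2+1}$ on each half-line and applying Hölder gives $|v(0)|^{\frac p2+1}\le C\,\|v'\|_{L^2(\R)}\,\|v\|_{L^p(\R)}^{p/2}$, hence $|v(0)|^q\le C\big(\|v'\|_{L^2(\R)}^2\,\|v\|_{L^p(\R)}^p\big)^{q/(p+2)}$; when $q<\tfrac p2+1$ the exponent $2q/(p+2)<1$ and Young's inequality with a remainder absorbs this into $\tfrac12\|v'\|_{L^2(\R)}^2+\tfrac1p\|v\|_{L^p(\R)}^p$. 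When instead $q<4$ the cruder trace bound $|v(0)|^2\le\|v\|_{L^2(\R)}\|v'\|_{L^2(\R)}$ already suffices, since then $|v(0)|^q\le C\mu^{q/4}\|v'\|_{L^2(\R)}^{q/2}$ with $q/2<2$. Either way one obtains a coercive lower bound $E_{p,q}(v)\ge c\big(\|v'\|_{L^2(\R)}^2+\|v\|_{L^p(\R)}^p\big)-C$, so $\EE_{p,q}>-\infty$. For (iii) I use a spike of height $A$ and width $\ell=A^{-\alpha}$ at the origin (profile $A\varphi(\cdot/\ell)$ with $\varphi(0)=1$), completed by a free plateau carrying the remaining mass as above; its kinetic, $L^p$, point, and mass contributions scale like $A^{2+\alpha}$, $A^{p-\alpha}$, $A^q$, and $A^{2-\alpha}$. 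Since $q>\max\{4,\tfrac p2+1\}$ I can pick $\alpha\in(\max\{2,p-q\},\,q-2)$, a nonempty interval precisely because $q>4$ and $q>\tfrac p2+1$; then $\alpha>2$ makes the spike mass vanish while $2+\alpha<q$ and $p-\alpha<q$ make the point term dominate, so $E_{p,q}\to-\infty$ as $A\to\infty$.

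\emph{The line $q=4$.} Here the optimal constant in $|v(0)|^2\le\|v\|_{L^2(\R)}\|v'\|_{L^2(\R)}$ (equality on $e^{-|\cdot|}$ profiles) is decisive. It yields $E_{p,4}(v)\ge\big(\tfrac12-\tfrac\mu4\big)\|v'\|_{L^2(\R)}^2+\tfrac1p\|v\|_{L^p(\R)}^p$, which is non-negative when $\mu\le2$; combined with the non-positivity above this gives $\EE_{p,4}(\mu)=0$ on $(0,2]$. For $\mu>2$ I test with $\varphi_\sigma(x)=\sigma^{1/2}\varphi(\sigma x)$, $\varphi=\sqrt\mu\,e^{-|\cdot|}$: as $q=4$, the kinetic and point terms both scale like $\sigma^2$ with combined coefficient $\tfrac\mu4(2-\mu)<0$, whereas the $L^p$ term scales like $\sigma^{p/2-1}$. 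Because $p<6$ forces $\tfrac p2-1<2$, the $\sigma^2$ term prevails as $\sigma\to\infty$ and $E_{p,4}(\varphi_\sigma)\to-\infty$; this is exactly where the hypothesis $p<6$ enters.

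\emph{Continuity.} On an interval where $\EE_{p,q}$ is finite I would prove continuity by amplitude rescaling. For $u\in H^1_\mu(\R)$ and $s$ near $1$ one has $\sqrt s\,u\in H^1_{s\mu}(\R)$ and $|E_{p,q}(\sqrt s\,u)-E_{p,q}(u)|\le C|s-1|\big(\|u'\|_{L^2(\R)}^2+\|u\|_{L^p(\R)}^p+|u(0)|^q\big)$. Applying this to near-minimizers at masses $\mu$ and $s\mu$ and using monotonicity in both directions gives $|\EE_{p,q}(s\mu)-\EE_{p,q}(\mu)|\le CM|s-1|$, provided the three energy terms are bounded by a constant $M$ locally uniformly in the mass. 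This bound is furnished by the coercivity established above whenever $q<\max\{4,\tfrac p2+1\}$ (so that any function with $E_{p,q}\le\EE_{p,q}+1$ is controlled in $H^1$); in the only other finite case, $q=4$ and $p<6$, one has $\EE_{p,4}\equiv0$ on $(0,2]$ and continuity is immediate. The hypothesis ``$q\neq\tfrac p2+1$ or $p<6$'' is consistent with this, since whenever $\EE_{p,q}$ is finite on an interval that hypothesis holds; the excluded borderline $q=\tfrac p2+1$ with $p\ge6$ is exactly where $2q/(p+2)=1$, so the absorption—and with it the coercivity—becomes only marginal. Securing the locally uniform bound $M$, equivalently the coercivity of $E_{p,q}$ along minimizing sequences, is the step I expect to be the main obstacle of this last part.
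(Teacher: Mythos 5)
Your proposal is correct, and it follows the paper's overall architecture (plateau constructions for monotonicity and non-positivity, scaling test functions for unboundedness, rescaled near-minimizers for continuity), but it diverges from the paper in the key technical step of part (i) and in the sourcing of part (ii). For part (i) in the regime $4\le q<\frac p2+1$, the paper argues by contradiction: assuming $E_{p,q}(u_n)\to-\infty$ at fixed mass, it localizes on $(-1,1)$, writes $|u_n(0)|^q$ via an integral identity, and uses interpolation between $L^q$ and $L^p$ plus Young to deduce $|u_n(0)|^q\le C\|u_n\|_p^q$, reaching a contradiction with $q<p$. You instead prove the direct trace-type inequality $|v(0)|^{\frac p2+1}\le C\|v'\|_{L^2(\R)}\|v\|_{L^p(\R)}^{p/2}$ and absorb $|v(0)|^q$ by Young (using $\frac{2q}{p+2}<1$), which yields a quantitative, \emph{mass-independent} coercive bound $E_{p,q}(v)\ge c\bigl(\|v'\|_2^2+\|v\|_p^p\bigr)-C$. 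This is cleaner and in fact stronger: it immediately gives the uniform-in-$\mu$ lower bound on $\EE_{p,q}$ that the paper only obtains later (Lemma \ref{lem:limE}) through the Euler--Lagrange identity \eqref{eq-u0lim} for ground states. For part (ii), the paper cites \cite[Theorem 1.2]{BD21} both for $\EE_{p,4}=0$ on $(0,2]$ and for the existence of a competitor with negative "kinetic minus point" part when $\mu>2$, whereas you make this self-contained via the sharp constant in $|v(0)|^2\le\|v\|_2\|v'\|_2$ and the explicit optimizer $\sqrt{\mu}\,e^{-|\cdot|}$; the subsequent $\sigma$-scaling (with $\frac p2-1<2$ from $p<6$) is identical to the paper's. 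For part (iii), your spike-plus-plateau at fixed mass replaces the paper's exponential profile with mass-scaling $\mu\to0^+$ combined with monotonicity; both are routine and equivalent. Finally, your closing worry about securing the locally uniform bound $M$ in the continuity step is unfounded: the coercivity you established in part (i) (mass-independent when $q<\frac p2+1$, locally uniform in $\mu$ when $q<4$) already bounds $\|u'\|_2^2+\|u\|_p^p$, and hence $|u(0)|^q$, along any near-minimizing sequence, which is exactly the same boundedness the paper invokes ("arguing as in the proof of Lemma \ref{lem:Ebound}") before applying its rescaling estimate \eqref{eq:scalEst} --- of which your amplitude rescaling $\sqrt s\,u$ is the special case $\alpha=\tfrac12$.
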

As a direct consequence of Theorem \ref{thm:propE}, ground states never exist when $ q>\max\left\{4,\frac p2+1\right\}$, i.e. regions $D$ and $E$ in Figure \ref{fig:piano_pq}, since in these regimes the energy is always unbounded from below in the mass constrained space. By \cite[Theorem 1.2]{BD21}, the same is true also when $p\in(2,6)$ and $q=4$, i.e. region $G$ in Figure \ref{fig:piano_pq}, even though $\EE_{p,q}$ is actually finite for some values of $\mu$ (see Remark \ref{rem:noGS} below for details). On the contrary, in the regimes of $p,q$ for which the ground state energy level is finite for every value of the mass, i.e. regions A, B, C, F and H in Figure \ref{fig:piano_pq}, ground states display a quite nontrivial phenomenology. Finally, note that, although Theorem \ref{thm:propE} does not manage completely region I in Figure \ref{fig:piano_pq} (only the subregion with $p\in(2,6)$), we are nevertheless able to discuss ground state existence there.

\smallskip
The next three theorems report our main results on ground states existence and multiplicity.

\begin{theorem}
	\label{thm:gs1}
	Let $p>2$ and $2< q<\min\left\{4,\frac p2+1\right\}$. The following holds:

	\begin{enumerate}[label=(\roman*)]
		
		\item if $p\in(2,6)$, then $\EE_{p,q}$ is negative on $(0,+\infty)$, strictly decreasing on $[0,\mu_{p,q})$, and $\EE_{p,q}(\mu)=\EE_{p,q}(\mu_{p,q})$ for every $\mu\geq\mu_{p,q}$, where $\mu_{p,q}$ is the value identified in Theorem \ref{thm:exsol}(i). Furthermore, ground states exist if and only if $\mu\in(0,\mu_{p,q}]$ and, for all such masses, they are unique (up to a change of sign);
			
			\smallskip
			\item if $p\geq6$, then $\EE_{p,q}$ is negative on $(0,+\infty)$ and strictly decreasing on $[0,+\infty)$, and $\displaystyle \lim_{\mu\to+\infty}\EE_{p,q}(\mu)>-\infty$. Furthermore, ground states exist and are unique (up to a change of sign) for every $\mu>0$.
	\end{enumerate}
	Moreover, there exists $\overline{\mu}_{p,q}>0$ such that $\EE_{p,q}$ is concave on $[0,\overline{\mu}_{p,q}]$ and convex on $[\overline{\mu}_{p,q},+\infty)$.
\end{theorem}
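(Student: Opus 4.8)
The plan is to reduce every assertion to the explicit one–parameter family of solutions produced by Theorems \ref{thm:nomass} and \ref{thm:exsol}, together with the sign and monotonicity of the associated Lagrange multiplier. First I would parametrize the positive solutions of \eqref{nls-nomass} by their height $a:=u(0)>0$ at the origin. The first–order reduction $(u')^2=\frac2p|u|^p+\lambda u^2$ (valid on each half--line because $u\in H^1(\R)$ decays at infinity), combined with the jump condition, forces $\lambda=\lambda(a)=\frac14a^{2q-4}-\frac2pa^{p-2}$ and yields quadrature formulas for $\mu(a)=\|u\|_{L^2(\R)}^2$ and for $E_{p,q}(u)$. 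In the regime $2<q<\min\{4,\frac p2+1\}$ one has $2q-4<p-2$, so $\lambda(a)>0$ on $(0,a_0)$ with $\lambda(a_0)=0$, where $a_0$ is the unique zero of $\lambda$; moreover $\mu(a)\to0$ as $a\to0^+$, while $\mu(a_0)=\mu_{p,q}<\infty$ if $p<6$ and $\mu(a)\to+\infty$ as $a\to a_0^-$ if $p\ge6$ (the $L^p$–quadrature remains convergent in both cases, which is exactly what distinguishes (i) from (ii)). By the uniqueness in Theorem \ref{thm:exsol}, $a\mapsto\mu(a)$ is strictly increasing, so it sets up the bijections onto $(0,\mu_{p,q}]$, respectively $(0,+\infty)$. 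Finally, combining the Nehari and Pohozaev identities gives the clean formula
\[
E_{p,q}(u)=\frac{2q-p-2}{pq}\,\|u\|_{L^p(\R)}^p+\frac{q-4}{2q}\,\lambda\mu,
\]
both summands being non--positive since $q<\frac p2+1$ and $q<4$; hence every solution has strictly negative energy, which already yields $\EE_{p,q}(\mu)<0$ on the range where solutions exist.

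Next I would show that $\EE_{p,q}(\mu)$ is attained and that the minimizer coincides with the unique solution above. Boundedness of minimizing sequences in $H^1(\R)$ follows from the one–dimensional interpolation $|v(0)|^q\le C\mu^{q/4}\|v'\|_{L^2(\R)}^{q/2}$ and $q<4$, which makes the kinetic term dominate; after symmetric decreasing rearrangement (which lowers $E_{p,q}$ and raises $|v(0)|$) a weak limit $v^*$ exists, and local uniform convergence gives $v_n(0)\to v^*(0)$, so $E_{p,q}$ is weakly lower semicontinuous. The only way compactness can fail is loss of mass at infinity, where the defocusing profile carries zero energy; ruling this out is the crux of the argument. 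The key observation is that, whenever the constrained minimizer solves \eqref{nls-nomass} with multiplier $\lambda>0$, differentiating $E_{p,q}((1+\eps)u)$ and using the Nehari identity gives $\frac{d}{d\mu}\EE_{p,q}(\mu)=-\frac\lambda2<0$ to first order, i.e.\ $\EE_{p,q}$ is strictly decreasing wherever $\lambda>0$; strict decrease forbids the splitting $\EE_{p,q}(\mu)=\EE_{p,q}(m)+0$ and therefore restores compactness. I would set this up as a continuation in $\mu$: minimizers exist for small $\mu$, and as long as $\lambda>0$ the strict monotonicity propagates and excludes dichotomy, covering $(0,\mu_{p,q})$ when $p<6$ (the endpoint being recovered by passing to the limit $\lambda\to0^+$ along the branch, where the profile still has finite mass $\mu_{p,q}$) and all of $(0,+\infty)$ when $p\ge6$. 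Since every minimizer is a solution, the uniqueness in Theorem \ref{thm:exsol} identifies it with $u_{a(\mu)}$, giving existence and uniqueness of ground states and the identity $\EE_{p,q}(\mu)=E_{p,q}(u_{a(\mu)})$.

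With this identification the global shape of $\EE_{p,q}$ follows from the smooth branch. Because $\mu'(a)>0$, the value function $\EE_{p,q}(\mu)=E_{p,q}(u_{a(\mu)})$ is smooth and, by the Hellmann–Feynman computation along the branch (using that $u_a$ is a constrained critical point), satisfies $\frac{d}{d\mu}\EE_{p,q}(\mu)=-\frac{\lambda(a(\mu))}2$. When $p\ge6$ this is negative on all of $(0,+\infty)$, giving strict decrease, and the limit $\lim_{\mu\to+\infty}\EE_{p,q}(\mu)=\lim_{a\to a_0^-}E_{p,q}(u_a)$ is finite because only the convergent $L^p$–quadrature survives at $\lambda=0$. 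When $p<6$ the same formula gives strict decrease on $[0,\mu_{p,q})$ with $\lambda\to0$ at $\mu_{p,q}$; for $\mu>\mu_{p,q}$ Theorem \ref{thm:exsol}(i) rules out solutions and hence ground states, and I would establish the plateau $\EE_{p,q}(\mu)=\EE_{p,q}(\mu_{p,q})$ by combining monotonicity (for $\le$) with a truncation that removes the far tail of a near–minimizer to restore mass $\mu_{p,q}$ at arbitrarily small energy cost (for $\ge$).

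The concavity–convexity statement then reduces to the monotonicity of $\lambda$ along the branch: differentiating once more, $\frac{d^2}{d\mu^2}\EE_{p,q}=-\frac12\frac{d}{d\mu}\lambda(a(\mu))$, so the sign change of $\EE_{p,q}''$ is governed by that of $\lambda'(a)$ since $\mu'(a)>0$. A direct computation shows $\lambda'$ vanishes at a unique $a_*$ with $a_*^{2q-p-2}=\frac{4(p-2)}{p(q-2)}$, with $\lambda$ increasing on $(0,a_*)$ and decreasing on $(a_*,a_0)$, and the strict inequality $q<\frac p2+1$ is precisely what guarantees $a_*<a_0$, so the transition lies inside the admissible range. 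Setting $\overline{\mu}_{p,q}:=\mu(a_*)<\mu_{p,q}$ yields concavity on $[0,\overline{\mu}_{p,q}]$ and convexity on $[\overline{\mu}_{p,q},+\infty)$ (the constant plateau for $p<6$ being trivially convex). I expect the genuine difficulty to be concentrated in the second step: making the exclusion of escaping mass rigorous and uniform in $\mu$, since the natural strict–subadditivity input is itself equivalent to the strict monotonicity one wants to prove, and it is exactly the first–order identity $\frac{d}{d\mu}\EE_{p,q}=-\frac\lambda2$ with $\lambda>0$ that breaks this circularity.
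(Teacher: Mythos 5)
Your overall strategy is the same as the paper's: parametrize the solution branch (your height parameter $a=u(0)$, with $\lambda(a)=\tfrac14 a^{2q-4}-\tfrac2p a^{p-2}$, is equivalent to the paper's parameter $t$ from \eqref{eq:condt}), establish $\EE_{p,q}'(\mu)=-\lambda(\mu)/2$, deduce strict decrease of $\EE_{p,q}$ wherever the multiplier is positive, and read the concavity/convexity off the monotonicity of $\lambda$ along the branch. Your explicit computations are correct: the Nehari--Pohozaev formula $E_{p,q}(u)=\tfrac{2q-p-2}{pq}\|u\|_p^p+\tfrac{q-4}{2q}\lambda\mu$, the critical height $a_*$ with $a_*^{2q-p-2}=\tfrac{4(p-2)}{p(q-2)}$, and the equivalence $a_*<a_0\iff q<\tfrac p2+1$ all check out. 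The genuine gap is in the step you yourself flag: the ``continuation in $\mu$'' does not close as written. First, the base case ``minimizers exist for small $\mu$'' is asserted, not proved, and it requires exactly the same mechanism as the rest of the argument. Second, and more seriously, the propagation step fails: knowing that minimizers exist on $(0,\mu_*]$ with $\lambda>0$ yields (via the one-sided derivative estimate) $\EE_{p,q}(\nu)<\EE_{p,q}(m)$ for every $\nu>\mu_*$ and every $m\leq\mu_*$, but to restore compactness at a mass $\overline\mu>\mu_*$ one needs $\EE_{p,q}(\overline\mu)<\EE_{p,q}(m)$ for \emph{all} $m<\overline\mu$, including $m\in(\mu_*,\overline\mu)$, where nothing is known yet. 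The paper avoids continuation altogether: if $\EE_{p,q}$ is not strictly decreasing, it has a maximal constancy interval whose left endpoint satisfies the full strict-decrease hypothesis, so a ground state exists there (Lemma \ref{lem:critex}); local constancy on the right then forces its Lagrange multiplier to vanish (Lemma \ref{lem:Estretdec}), and $\lambda=0$ solutions are completely classified (they exist only for $p<6$, with mass exactly $\mu_{p,q}$). This single dichotomy supplies the base case, the propagation, and the plateau simultaneously; your ingredients suffice for it, but the logical organization is the nontrivial point and is missing from your sketch.

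A second, smaller flaw: your proof of the plateau inequality $\EE_{p,q}(\mu)\geq\EE_{p,q}(\mu_{p,q})$ for $\mu>\mu_{p,q}$ by truncating the far tail of a near-minimizer cannot work as described. The cutoff error scales like $\mu/L^2$ over a cutoff region of length $L$, while keeping the truncated mass at or below $\mu_{p,q}$ caps $L$ by a fixed constant (a bounded, radially decreasing near-minimizer can store mass $\mu_{p,q}$ within a region of bounded size), so the error cannot be made small; and if instead one allows the truncated mass to exceed $\mu_{p,q}$, bounding $\EE_{p,q}$ at that larger mass is precisely what is to be proved. The fix is already in your toolkit: take the weak limit $u$ of a rearranged minimizing sequence at mass $\mu$; since $\EE_{p,q}(\mu)<0$ forces $u(0)\neq0$, the lower-semicontinuity chain shows $u$ is itself a ground state at its own mass $m\leq\mu$, hence a solution of \eqref{nlse}, hence $m\leq\mu_{p,q}$, whence $\EE_{p,q}(\mu)=\EE_{p,q}(m)\geq\EE_{p,q}(\mu_{p,q})$ by monotonicity. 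This is exactly the mechanism of Lemma \ref{lem:critex} in the paper.
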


\begin{theorem}
	\label{thm:gs2}
	Let $2<p<6$ and $ \frac p2+1< q <4$, or $p>6$ and $ 4\leq q < \frac p2+1$. Then, there exists $\widetilde{\mu}_{p,q}>0$ such that
	\[
	\EE_{p,q}(\mu)\begin{cases}
	=0, & \text{if }\mu\leq\widetilde{\mu}_{p,q},\\
	<0, & \text{if }\mu>\widetilde{\mu}_{p,q},
 	\end{cases}
	\]
	$\EE_{p,q}$ is strictly decreasing on $[\widetilde{\mu}_{p,q},+\infty)$ and
	\[
	\lim_{\mu\to+\infty}\EE_{p,q}(\mu)\begin{cases}
	=-\infty, & \text{if }2<p<6,\,\frac p2+1<q<4,\\
	>-\infty, & \text{if }p>6,\,4\leq q < \frac p2+1.
	\end{cases}
	\]
	Moreover, when $q\neq 4$ ground states exist if and only if $\mu\geq\widetilde{\mu}_{p,q}$. Conversely, when $q=4$ it holds $\widetilde{\mu}_{p,4}=2$ for every $p>6$, ground states exist if and only if $\mu>2$ and, for all such masses, they are unique (up to a change of sign).
\end{theorem}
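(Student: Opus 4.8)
The plan is to treat separately the three sub-regimes covered by the statement — call them (F) $2<p<6,\ \frac p2+1<q<4$; (C) $p>6,\ 4<q<\frac p2+1$; (H) $p>6,\ q=4$ — and to reduce the behaviour of $\EE_{p,q}$ to the family of solutions of \eqref{nls-nomass} furnished by Theorem \ref{thm:nomass}. Throughout I use that $\EE_{p,q}\le0$ is non--increasing and, since $q\neq\frac p2+1$ in all three regimes, finite and continuous on $(0,+\infty)$ (Theorem \ref{thm:propE}(i) and its last assertion), together with the elementary observation that a minimiser of $E_{p,q}$ on $H^1_\mu(\R)$ is a critical point, hence, up to sign, one of the positive even decreasing solutions $u_\lambda$ of \eqref{nls-nomass} with $\lambda\ge0$ classified in Theorem \ref{thm:nomass}.

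First I would fix the sign of $\EE_{p,q}$ and the threshold. The key linear estimate is the sharp bound $|v(0)|^2\le\|v\|_{L^2(\R)}\|v'\|_{L^2(\R)}$, with equality for $v(x)=e^{-|x|}$. In regime (H) this gives $\tfrac14|v(0)|^4\le\tfrac\mu4\|v'\|_{L^2(\R)}^2$ on $H^1_\mu(\R)$, whence $E_{p,4}(v)\ge\big(\tfrac12-\tfrac\mu4\big)\|v'\|_{L^2(\R)}^2+\tfrac1p\|v\|_{L^p(\R)}^p$; for $\mu\le2$ this is $\ge0$, so $\EE_{p,4}(\mu)=0$, and at $\mu=2$ it is $>0$ for every admissible $v$, so the infimum is not attained, which identifies $\widetilde\mu_{p,4}=2$. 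For $\mu>2$ the dilation $v_\sigma(x)=\sigma^{1/2}v(\sigma x)$ of $v=\sqrt{\mu}\,e^{-|x|}$ has energy $\tfrac\mu4(2-\mu)\sigma^2+\tfrac1p\|v\|_{L^p(\R)}^p\sigma^{p/2-1}$, which is negative for small $\sigma$ since $p/2-1>2$; hence $\EE_{p,4}(\mu)<0$. In (C) and (F) the threshold is read off the family instead: for $\mu<\mu_{p,q}$ problem \eqref{nlse} has no solution with $\lambda\ge0$ (Theorem \ref{thm:exsol}(iii)), so there is no critical point, and a negative value of $\EE_{p,q}$ — which, as explained below, would be attained — is excluded; thus $\EE_{p,q}\equiv0$ on $(0,\mu_{p,q})$ and, by continuity, at $\mu_{p,q}$. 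Evaluating the energy along the branch through the Pohozaev identity $\|u'\|_{L^2(\R)}^2+\tfrac{p-2}{2p}\|u\|_{L^p(\R)}^p=\tfrac12|u(0)|^q$ yields $E_{p,q}(u_0)=\tfrac{2q-p-2}{2q}\|u_0'\|_{L^2(\R)}^2>0$ at the bottom of the branch in (F) (so $\widetilde\mu_{p,q}>\mu_{p,q}$), while in (C), where there is no $\lambda=0$ solution, continuity forces $\widetilde\mu_{p,q}=\mu_{p,q}$ and $E_{p,q}=0$ at the fold $\lambda=\overline{\lambda}_{p,q}$.

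The analytic core — and the step I expect to be the main obstacle — is the attainment of $\EE_{p,q}(\mu)$ when it is strictly negative, together with the strict subadditivity that drives it. For a minimising sequence $v_n$ with $E_{p,q}(v_n)\to\EE_{p,q}(\mu)<0$, the strict sign forces $|v_n(0)|$ to stay bounded away from $0$, ruling out vanishing; boundedness in $H^1(\R)$ comes, in (F), from the fixed mass and the subcritical Gagliardo--Nirenberg absorption of $|v(0)|^q$ (valid as $q<4$), and in (C), (H) from the uniform lower bound of the next paragraph, so the continuity of point evaluation under weak convergence produces a nontrivial limit $v$ with $v(0)\neq0$. The delicate point is excluding dichotomy: the problem at infinity is the purely defocusing $w\mapsto\tfrac12\|w'\|_{L^2(\R)}^2+\tfrac1p\|w\|_{L^p(\R)}^p$, whose constrained infimum is $0$, so ruling out escaping mass is equivalent to the strict subadditivity $\EE_{p,q}(\mu)<\EE_{p,q}(m)$ for $0<m<\mu$. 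I would obtain this from the relation $\frac{d}{d\mu}\EE_{p,q}(\mu)=-\tfrac12\lambda(\mu)<0$ along the branch (the multiplier of \eqref{nlse} being strictly positive on $[\widetilde\mu_{p,q},+\infty)$), which says that moving mass to the origin lowers the energy at first order and thus beats the zero cost of leaving it at infinity. A Brezis--Lieb splitting then upgrades weak to strong convergence, the infimum is attained, and the minimiser, being a critical point, is one of the $u_\lambda$ — in (C) the lower of the two branches of Theorem \ref{thm:nomass}(iii-c) — so $\EE_{p,q}(\mu)=E_{p,q}(u_{\lambda(\mu)})$ is strictly decreasing on $[\widetilde\mu_{p,q},+\infty)$. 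Below the threshold the same classification shows that a minimiser would have to be a critical point of energy $0$, of which there are none, so $\EE_{p,q}$ is not attained there; at $\widetilde\mu_{p,q}$ the zero--energy solution exhibited above attains $\EE_{p,q}=0$, completing the existence dichotomy across $\widetilde\mu_{p,q}$.

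It remains to identify the limit at infinity and, for $q=4$, uniqueness. In (C) and (H) the bound $q\le\frac p2+1$, i.e. $2q<p+2$, makes $|v(0)|^q$ subcritical with respect to $\tfrac12\|v'\|_{L^2(\R)}^2+\tfrac1p\|v\|_{L^p(\R)}^p$, so Young's inequality applied to $|v(0)|^q\le C\|v'\|_{L^2(\R)}^{\frac{2q}{p+2}}\|v\|_{L^p(\R)}^{\frac{pq}{p+2}}$ gives a lower bound $E_{p,q}(v)\ge-C_{p,q}$ uniform in $\mu$; hence $\lim_{\mu\to+\infty}\EE_{p,q}(\mu)>-\infty$ (consistently, $\lambda(\mu)\to0^+$ along the large--mass branch, so $\frac{d}{d\mu}\EE_{p,q}\to0^-$). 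In (F) the same term is supercritical, and testing along $u_\lambda$ as $\lambda\to+\infty$ drives $E_{p,q}(u_\lambda)\to-\infty$, giving the stated limit. Finally, for $q=4$ and $\mu>2$ uniqueness follows by reducing to positive even decreasing profiles (Theorem \ref{thm:nomass}) and showing that the mass is strictly monotone along the minimising branch, so that the minimiser--to--mass correspondence is injective; the extremal case $v=e^{-|x|}$ of the sharp bound pins the branch at the threshold $\widetilde\mu_{p,4}=2$.
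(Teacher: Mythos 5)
Your core compactness step is circular. You propose to rule out dichotomy via the strict subadditivity $\EE_{p,q}(\mu)<\EE_{p,q}(m)$ for $0<m<\mu$, and to obtain this from the relation $\frac{d}{d\mu}\EE_{p,q}(\mu)=-\tfrac12\lambda(\mu)<0$ ``along the branch''. But $\lambda(\mu)$ is the multiplier of a ground state $u_\mu$ at mass $\mu$: the identity $\EE_{p,q}'=-\lambda(\mu)/2$ presupposes exactly the existence (and continuity in $\mu$) of minimizers that this argument is supposed to deliver. If instead you read $\lambda$ along the solution branch of \eqref{nls-nomass}, the relation controls $E_{p,q}(u_{t(\mu)})$ but not the infimum $\EE_{p,q}(\mu)$ until you know minimizers lie on that branch; moreover in region C the branch is not a graph over the mass (two solutions for each $\lambda\in(0,\overline{\lambda}_{p,q})$ and non-monotone mass, cf.\ Remark \ref{rem:nomon_mu}), so ``the minimising branch'' is not even well defined a priori. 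The paper breaks this circle in the opposite order: Lemma \ref{lem:critex} yields existence \emph{once} one knows $\EE_{p,q}(\overline\mu)<\EE_{p,q}(\mu)$ for all $\mu<\overline\mu$, and the strict decrease is proved without any minimizer — in region F by the pure scaling inequality \eqref{eq:scalEst} with $\alpha=\tfrac1{4-q}$ (giving \eqref{eq:Emu12}), and for $p>6$ by contradiction: were $\EE_{p,q}$ locally constant, Lemmas \ref{lem:critex}--\ref{lem:Estretdec} would produce a ground state solving \eqref{nlse} with $\lambda=0$, which is impossible for $p\geq6$ by Theorem \ref{thm:nomass}(ii). You need one of these mechanisms (or a genuine replacement); as written, the subadditivity step, and with it attainment for $\mu>\widetilde\mu_{p,q}$ and even the positivity of the threshold in regions C and F (which you justify by ``a negative value would be attained''), all fail.

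A second genuine gap is your identification of the threshold and of attainment \emph{at} $\widetilde\mu_{p,q}$. In region F the computation $E_{p,q}(u_0)=\tfrac{2q-p-2}{2q}\|u_0'\|_2^2>0$ concerns the $\lambda=0$ solution, which is the $t\to+\infty$ endpoint of the branch, not the minimal-mass solution (mass is not monotone along the branch there), so it does not prove $\widetilde\mu_{p,q}>\mu_{p,q}$. In region C nothing forces $E_{p,q}=0$ at the fold $\lambda=\overline{\lambda}_{p,q}$: by the Nehari and Pohozaev identities the energy of any solution equals $\tfrac{q-4}{2q}\|u'\|_2^2+\tfrac{q-p+2}{pq}\|u\|_p^p$, whose sign is indeterminate in C, and indeed the paper explicitly leaves open whether $\widetilde\mu_{p,q}=\mu_{p,q}$. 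Consequently your argument for existence at $\mu=\widetilde\mu_{p,q}$ when $q\neq4$ has no support; the paper instead takes ground states at masses $\mu_n\downarrow\widetilde\mu_{p,q}$ and passes to the limit using the $H^1(\R)$-continuity of the solution map (Remark \ref{rem:cont_sol}). On the positive side, two of your ingredients are correct and cleaner than the paper's: the treatment of region H via the sharp bound $|v(0)|^2\le\|v\|_2\|v'\|_2$ and the explicit dilation of $\sqrt\mu\,e^{-|x|}$ (avoiding the citation of \cite{BD21}), and the uniform-in-$\mu$ lower bound in C and H from $|v(0)|^{(p+2)/2}\le C\|v'\|_2\|v\|_p^{p/2}$ plus Young's inequality (using $2q<p+2$), which replaces the paper's Lemma \ref{lem:limE} without ever invoking ground states. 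These, however, do not repair the two gaps above.
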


\begin{theorem}
	\label{thm:gs3}
	Let $p>2$ and $ q=\frac p2+1$. Then $\EE_{p,q}(\mu)$ is not attained for any $\mu>0$.
\end{theorem}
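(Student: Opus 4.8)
The plan is to argue by contradiction: assume $\EE_{p,q}(\mu)$ is attained at some $u\in H_\mu^1(\R)$ for some $\mu>0$, and derive a contradiction. Such a minimizer is a constrained critical point of $E_{p,q}$, hence a nontrivial weak solution of \eqref{nlse} for a suitable Lagrange multiplier $\lambda\in\R$. Since $q=\frac p2+1$, Theorem \ref{thm:nomass} applies on two fronts: it guarantees that $u$ is even, positive (up to a change of sign) and radially decreasing, and it classifies for which $\lambda$ a nontrivial solution can exist. Precisely, combining parts (i), (ii) and (iii-b), on the line $q=\frac p2+1$ problem \eqref{nls-nomass} has no nontrivial solution in $H^1(\R)$ when $\lambda\le0$, while it admits one (the unique positive one) for $\lambda>0$ if and only if $p>8$. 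This splits the discussion into two regimes.

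When $2<p\le8$, there is simply no nontrivial solution of \eqref{nls-nomass} for any value of $\lambda$. Since a minimizer of $E_{p,q}$ on $H_\mu^1(\R)$ would be exactly such a solution (with $\lambda$ its multiplier), no minimizer can exist, and $\EE_{p,q}(\mu)$ is not attained for any $\mu>0$.

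The delicate case is $p>8$, where genuine solutions do exist for every $\lambda>0$: here I must show that, although they solve the equation, they are never ground states because their energy is strictly positive, while $\EE_{p,q}\le0$ by Theorem \ref{thm:propE}. To this end I would compute $E_{p,q}(u)$ for the unique positive even solution $u$ at a given $\lambda>0$ through two identities. First, multiplying the equation on $(0,+\infty)$ by $u'$ and integrating, using the decay of $u,u'$ at infinity, yields the first integral $\tfrac12 u'(x)^2=\tfrac1p u(x)^p+\tfrac\lambda2 u(x)^2$; integrating this over $(0,+\infty)$ and exploiting evenness gives
\[
\tfrac12\|u'\|_{L^2(\R)}^2=\tfrac1p\|u\|_{L^p(\R)}^p+\tfrac{\lambda\mu}{2}.
\]
Second, testing \eqref{nlse} against $u$ (integrating by parts on each half--line and using the jump condition to produce the boundary term $|u(0)|^q$) yields the Nehari--type identity $|u(0)|^q-\|u'\|_{L^2(\R)}^2-\|u\|_{L^p(\R)}^p=\lambda\mu$.

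Writing $K=\tfrac12\|u'\|_{L^2(\R)}^2$, $P=\tfrac1p\|u\|_{L^p(\R)}^p$ and $V=\tfrac1q|u(0)|^q$, the two identities read $K-P=\tfrac{\lambda\mu}{2}$ and $qV=2K+pP+\lambda\mu$. Eliminating $V$ via $q=\tfrac{p+2}{2}$ and substituting $\lambda\mu=2(K-P)$, a short computation collapses the energy to the closed form
\[
E_{p,q}(u)=K+P-V=\frac{(p-6)(K-P)}{p+2}=\frac{(p-6)\,\lambda\mu}{2(p+2)}.
\]
Since $p>8>6$ and $\lambda,\mu>0$, this gives $E_{p,q}(u)>0\ge\EE_{p,q}(\mu)$, so $u$ cannot be a minimizer; as every solution is of this form, $\EE_{p,q}(\mu)$ is again not attained. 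The main obstacle is precisely this last computation: because solutions do exist when $p>8$, non-attainment cannot follow from Theorem \ref{thm:nomass} alone, and one must extract enough structure (the first integral together with the Pohozaev/Nehari balance forced by the critical relation $q=\frac p2+1$) to see that every available solution sits strictly above the non-positive infimum. Conceptually, this rigidity reflects the exact dilation $u_s(x)=s^{2/(p-2)}u(sx)$, under which $E_{p,q}(u_s)=s^{(p+2)/(p-2)}E_{p,q}(u)$ on the critical line, so that the energy of a solution is pinned down by its multiplier $\lambda$ and mass $\mu$ alone.
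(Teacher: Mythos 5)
Your proof is correct, and while its skeleton matches the paper's (split at $p=8$ using the classification of solutions of \eqref{nls-nomass}, then for $p>8$ show every available solution has strictly positive energy, which beats the non-positive infimum from Theorem \ref{thm:propE}), the key step is carried out by a genuinely different and cleaner route. The paper proves positivity of the energy for $p>8$ by brute force: it plugs the explicit solution formula \eqref{eq:exp_ul>0} into $E_{p,q}$, computes $\|u_t'\|_2^2$, $\|u_t\|_p^p$ and $|u_t(0)|^q$ as integrals in the parameter $t$ of \eqref{eq:condt}, and after integration by parts and the relation \eqref{eq:fgdiag} arrives at a positive multiple of $I(t)$. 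You instead avoid the explicit solution entirely: combining the first integral \eqref{eq:emec} (integrated over $\R$, giving $\tfrac12\|u'\|_2^2-\tfrac1p\|u\|_p^p=\tfrac{\lambda\mu}{2}$) with the Nehari identity obtained by testing against $u$, and using only the critical relation $q=\tfrac p2+1$, you collapse the energy to the closed form
\[
E_{p,q}(u)=\frac{(p-6)\,\lambda\mu}{2(p+2)}>0 \qquad (p>8,\ \lambda>0),
\]
which is immediate and moreover exhibits the exact dependence of the energy on the multiplier and the mass. Your formula is in fact consistent with the paper's computation: substituting the mass formula \eqref{eq:mass1} into your expression reproduces $\frac{(p-6)2^{\frac{p-4}{p-2}}p^{\frac{2}{p-2}}\lambda^{\frac{p+2}{2(p-2)}}}{(p+2)(p-2)}I(t)$, so the factor $(t^2-1)^{\frac{2}{p-2}}$ appearing in the paper's final display appears to be a typo for $(p-6)$ (harmless, since both are positive in the relevant range). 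What the paper's approach buys is uniformity with the rest of Section \ref{sec:gs}, where the same explicit integrals are needed anyway; what your approach buys is brevity, independence from the closed-form solution, and a conceptual explanation (via the scaling $u_s(x)=s^{2/(p-2)}u(sx)$, under which all three terms of $E_{p,q}$ scale identically on the critical line) of why the energy of a solution is pinned down by $\lambda$ and $\mu$ alone.
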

With respect to Figure \ref{fig:piano_pq}, Theorem \ref{thm:gs1}(i) refers to region A, Theorem \ref{thm:gs1}(ii) to B, Theorem \ref{thm:gs2} to C, F, H, and Theorem \ref{thm:gs3} to I. In view of Theorem \ref{thm:exsol} above, it is no surprise that the existence/non-existence of small mass ground states as in Theorems \ref{thm:gs1}--\ref{thm:gs2} depends only on $(p,q)$ being outside/inside the portion of the $pq$--plane enclosed by $q=4$ and $q=\frac p2+1$. Furthermore, the uniqueness of fixed mass ground states reported in Theorem \ref{thm:gs1} and in Theorem \ref{thm:gs2} with $q=4$ is a direct consequence of the uniqueness results of Theorem \ref{thm:exsol}. In these regimes, corresponding to regions A, B and H, normalized solutions and ground states coincide. Conversely, when $(p,q)$ is either in C or in F, not only we do not know whether ground states at fixed mass are unique in view of Theorem \ref{thm:nonun}, but we cannot even tell whether we have ground states for every mass for which a normalized solution exists, since we do not know whether the thresholds $\mu_{p,q}, \widetilde{\mu}_{p,q}$ found in Theorem \ref{thm:exsol}(iii) and Theorem \ref{thm:gs2} coincide (we only have the trivial estimate $\mu_{p,q}\leq\widetilde{\mu}_{p,q}$).

One of the main element of interest of Theorems \ref{thm:gs1}--\ref{thm:gs2} lies perhaps in some rather unexpected properties of the ground state energy level $\EE_{p,q}$. First, $\EE_{p,q}$ is bounded from below uniformly on $\mu>0$ whenever $q<\frac p2+1$. In particular, this automatically implies that ground states are bounded in $L^\infty(\R)$ uniformly on $\mu>0$. To the best of our knowledge, this is the first time that such phenomenon is observed for NLS equations, since usually both the $L^\infty$ norm and the energy blow up along sequences of ground states with mass diverging to infinity. Second, when one further assumes $q<4$, Theorem \ref{thm:gs1} proves that $\EE_{p,q}$ is neither concave nor convex on $(0,+\infty)$.  This feature is quite surprising, since the NLS ground state energy level has been proved to be concave not only when the nonlinearities share the same sign (see e.g. \cite{DST23}), but also with defocusing-focusing standard nonlinearities \cite[Theorem 6]{LRN20}. Thus, it seems that these new phenomena are seated not only in the competition between defocusing and focusing nonlinearities, but also in the point nature of one of them.

Observe that, when $(p,q)$ belongs to region A, the fact that $\EE_{p,q}$ is constant on $[\mu_{p,q},+\infty)$ is particularly interesting also from a variational point of view. Typically, when NLS ground states at a certain mass do not exist, this is due to the presence of an energetically convenient problem at infinity that makes energy minimizing sequences converge weakly to zero, thus losing all their mass in the limit (the vanishing case in the concentration--compactness framework).  On the contrary, this is not what happens here in region A. Indeed, since $\EE_{p,q}(\mu)=\EE_{p,q}(\mu_{p,q})$ for every $\mu>\mu_{p,q}$, but it is attained only when $\mu=\mu_{p,q}$, energy minimizing sequences with any mass $\mu$ strictly larger than the threshold will fail to be compact in $H_\mu^1(\R)$ because only the partial amount of mass $\mu-\mu_{p,q}$ runs away at infinity (with vanishing energy), while the rest of the mass concentrates towards a non-zero weak limit that is a ground state with mass $\mu_{p,q}$.

To conclude, we highlight that, even though Theorem \ref{thm:gs3} rules out ground states at every mass when $q=\frac p2+1$, no information is given about $\EE_{p,q}$ when $p\geq6$ and $ q=\frac p2+1$ in Theorem \ref{thm:propE}. On the contrary, combining Theorem \ref{thm:gs3} with Section \ref{sec:E} below, it is easy to see that $\EE_{p,q}(\mu)=0$ for every $\mu$ when $p\in(2,6)$.

\subsection*{Organization of the paper}

The paper is organized as follows:

\begin{itemize}
 \item Section \ref{sec:stat} addresses the problem \eqref{nls-nomass} without the mass constraint, providing the proof of Theorem \ref{thm:nomass};

 \smallskip
 \item Section \ref{sec:mass} investigates the dependence on $\lambda$ of the mass of the solutions of  \eqref{nls-nomass} and proves Theorems \ref{thm:exsol}--\ref{thm:nonun};

 \smallskip
 \item in Section \ref{sec:E} we derive preliminary properties of the ground state energy level, including Theorem \ref{thm:propE};

 \smallskip
 \item in Section \ref{sec:gs} we complete the proof of our main results about ground states, namely Theorems \ref{thm:gs1}--\ref{thm:gs2}--\ref{thm:gs3}.
\end{itemize}

\subsection*{Notation}

Whenever possible, in the rest of the paper we will use the shorthand notation $\|u\|_r$ for the $L^r$--norm of $u$.

\section{Proof of Theorem \ref{thm:nomass}}
\label{sec:stat}

In this section we present the proof of the first result of the paper, namely Theorem \ref{thm:nomass}. For the sake of simplicity, we split the proof in three preliminary lemmas. The first one concerns qualitative features of the solutions of \eqref{nls-nomass}.

\begin{lemma}
	\label{lem:statQual}
	Let $u\in H^1(\R)$ be a nontrivial solution of \eqref{nls-nomass}, for some $\lambda\in\R$. Then, $\lambda\geq0$. Moreover, $u$ is even and, up to a change of sign, positive on $\R$ and strictly decreasing on $\R^+$.
\end{lemma}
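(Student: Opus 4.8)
The plan is to exploit the fact that, away from the origin, \eqref{nls-nomass} is an autonomous second-order ODE possessing a first integral, and to combine this with the nonlinear interface condition at $0$ and ODE uniqueness to extract the symmetry, positivity and monotonicity of $u$, as well as the sign of $\lambda$.

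First I would upgrade the regularity and decay of $u$. Since $u\in H^1(\R)$ solves $u''=\lambda u+|u|^{p-2}u$ classically on each of $(-\infty,0)$ and $(0,+\infty)$, one has $u\in C^2$ there; moreover $|u|^{p-2}u\in L^2$ on each half-line (because $H^1(\R)\hookrightarrow L^2\cap L^\infty$), so the restriction of $u'$ to each half-line lies in the corresponding $H^1$, whence both $u(x)\to0$ and $u'(x)\to0$ as $|x|\to\infty$. The key object is then the conserved energy
\[
H(x):=\tfrac12\,u'(x)^2-\tfrac{\lambda}{2}\,u(x)^2-\tfrac1p\,|u(x)|^p,
\]
which is constant on each half-line, its derivative being $u'(u''-\lambda u-|u|^{p-2}u)=0$. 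Letting $|x|\to\infty$ forces $H\equiv0$ on each half-line, i.e.
\[
u'(x)^2=\lambda\,u(x)^2+\tfrac2p\,|u(x)|^p=u(x)^2\Big(\lambda+\tfrac2p\,|u(x)|^{p-2}\Big),\qquad x\neq0.
\]

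From this identity everything except evenness follows quickly. For $\lambda\geq0$: since $u$ is nontrivial, continuous and vanishing at infinity, it attains arbitrarily small nonzero values, and at such a point the right-hand side above would be strictly negative if $\lambda<0$, contradicting that the left-hand side is a square. For the absence of zeros: at any zero $x_0\neq0$ of $u$ the identity gives $u'(x_0)=0$, and since $s\mapsto|s|^{p-2}s$ is $C^1$ the Cauchy problem is uniquely solvable, so $u\equiv0$ on that half-line, a contradiction; the same reasoning rules out $u(0)=0$, which through the identity would give $u'(0^+)=0$. Hence $u$ never vanishes, has constant sign on each half-line and, matching signs through $u(0)\neq0$, has constant sign on all of $\R$; up to replacing $u$ by $-u$ I may assume $u>0$.

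The heart of the argument is the interface at $0$. Evaluating the identity from both sides yields $u'(0^+)^2=u'(0^-)^2$, while the interface condition reads $u'(0^-)-u'(0^+)=u(0)^{q-1}>0$; these are compatible only if $u'(0^+)=-u'(0^-)$, which in turn forces $u'(0^+)<0$. Setting $v(x):=u(-x)$, the function $v$ solves the same ODE on $(0,+\infty)$ with $v(0)=u(0)$ and $v'(0^+)=-u'(0^-)=u'(0^+)$, so uniqueness of the Cauchy problem gives $v\equiv u$, i.e. $u$ is even. Finally, on $(0,+\infty)$ the identity reads $u'(x)^2=u(x)^2(\lambda+\tfrac2p u(x)^{p-2})>0$, so $u'$ never vanishes there; being continuous and negative at $0^+$, it stays negative, proving that $u$ is strictly decreasing on $\R^+$. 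I expect the main obstacle to be the careful treatment of the origin: one must glue the two one-sided copies of the first integral (which fix only $|u'(0^\pm)|$) to the nonlinear jump condition (which fixes the sign of the jump) and to ODE uniqueness, in order to read off simultaneously the sign of $u'(0^+)$ and the evenness of $u$; dealing with the degenerate possibility $u(0)=0$ and justifying $u'(x)\to0$ at infinity (noting that the distributional $u''$ carries a Dirac mass at $0$, so the decay must be argued half-line by half-line) are the points demanding most care.
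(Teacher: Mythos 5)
Your proposal is correct and follows essentially the same route as the paper's proof: the same first integral $u'^2=\lambda u^2+\tfrac2p|u|^p$ obtained from decay at infinity, Cauchy--Lipschitz uniqueness to exclude zeros and to derive evenness from the matching of $u(0)$ and $u'(0^\pm)$, and the sign analysis at the interface combined with the identity to get monotonicity. The only differences are cosmetic (you fix the sign of $u'(0^+)$ and propagate it, where the paper argues monotonicity by contradiction), and your write-up actually makes explicit the "suitable change of variable" that the paper leaves vague.
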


\begin{proof}
Let $u\in H^1(\R)$ be a nontrivial solution of \eqref{nls-nomass}, for some $\lambda\in\R$. A standard bootstrap argument shows that $u\in H^2(\R^+)\oplus H^2(\R^-)$, so that $\displaystyle \lim_{|x|\to+\infty}u(x)=\lim_{|x|\to+\infty}u'(x)=0$. Multiplying the equation in \eqref{nls-nomass} by $u'$ and suitably integrating one has
\begin{equation}
	\label{eq:emec}
	u'(x)^2=\lambda u(x)^2+\frac2p|u(x)|^p,\qquad\forall x\in\R\setminus\{0\}.
\end{equation}
Combining the continuity of $u$ on $\R$ and $u'$ on $\R\setminus\{0\}$, the boundary condition in \eqref{nls-nomass} and \eqref{eq:emec}, one sees that $u$ cannot vanish on $\R$ (otherwise, $u\equiv0$ by the Cauchy--Lipschitz Theorem) and, consequently, that  $\lambda\geq0$ (otherwise $ 0<\left(\frac{p|\lambda|}{2}\right)^{\frac{1}{p-2}}\leq |u(x)|\to0$, as $|x|\to+\infty$). Furthermore, owing again to the continuity of $u$ and $u'$, the boundary condition in \eqref{nls-nomass} and \eqref{eq:emec}, it holds that $u'(0^-)=-u'(0^+)\neq0$. Thus, with a suitable change of variable, one finds that $u$ solves somehow the same Cauchy Problem on $\R^+$ and $\R^-$, which proves even symmetry. Finally, in order to obtain decreasing monotonicity on $\R^+$ of a positive solution $u$, assume by contradiction that $u'(x)>0$ for some $x\in\R^+$. Then, \eqref{eq:emec} implies $u'(y)=\sqrt{\lambda u(y)^2+\frac{2|u(y)|^p}{p}}>0$, for every $y\geq x$, contradicting $\displaystyle \lim_{s\to+\infty}u(s)=0$.
\end{proof}

Now, in view of Lemma \ref{lem:statQual}, in order to discuss existence and multiplicity of nontrivial solutions in $H^1(\R)$ of \eqref{nls-nomass}, it suffices to focus on positive solutions. We present below such discussion dividing the cases $\lambda>0$ and $\lambda=0$ (recall that Lemma \ref{lem:statQual} also prevents the case $\lambda<0$).

\begin{lemma}
	\label{prop:l>0}
	Let $p,q>2$. The following holds:

	\begin{enumerate}[label=(\roman*)]

		\item if $ q>\frac p2+1$, then \eqref{nls-nomass} has a unique positive solution in $H^1(\R)$, for every $\lambda>0$;

		\smallskip
		\item if $ q<\frac p2+1$, then there exists $\overline\lambda_{p,q}>0$ such that \eqref{nls-nomass} has exactly:

		\begin{itemize}
		 \item two positive solutions in $H^1(\R)$ for every $\lambda\in(0,\overline{\lambda}_{p,q})$,

		 \smallskip
		 \item one positive solution in $H^1(\R)$ for $\lambda=\overline{\lambda}_{p,q}$,

		 \smallskip
		 \item no positive solution in $H^1(\R)$ for $\lambda>\overline\lambda_{p,q}$;
		\end{itemize}

		\medskip
		\item if $ q=\frac p2+1$, then, for every $\lambda>0$, \eqref{nls-nomass} has:

		\begin{itemize}
		 \item no positive solution in $H^1(\R)$ for $p\in(2,8]$,

		 \smallskip
		 \item a unique positive solution in $H^1(\R)$ for $p>8$.
		\end{itemize}
 	\end{enumerate}
\end{lemma}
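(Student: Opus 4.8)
The plan is to reduce the counting of positive $H^1(\R)$ solutions to a scalar algebraic equation for the value $a := u(0) > 0$. By Lemma \ref{lem:statQual} we may restrict to positive, even, and decreasing profiles, so everything is governed by the behaviour on $\R^+$. Evaluating the first integral \eqref{eq:emec} at $x = 0^+$ and using that evenness forces $u'(0^-) = -u'(0^+)$, the boundary condition in \eqref{nls-nomass} becomes $-2u'(0^+) = a^{q-1}$, that is $u'(0^+) = -\f12 a^{q-1}$. Inserting this into \eqref{eq:emec} and dividing by $a^2$ yields the key relation
\begin{equation}
	\lambda = f(a) := \f14 a^{2q-4} - \f2p a^{p-2}.
\end{equation}
Hence every positive solution produces a root $a > 0$ of $f(a) = \lambda$, and the trichotomy of the statement corresponds exactly to the sign of the exponent gap $2q-4 - (p-2) = 2q - p - 2$, i.e. to the position of $q$ with respect to $\f p2 + 1$.

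I would then prove the converse, namely that positive $H^1(\R)$ solutions are in bijection with the positive roots of $f(a) = \lambda$. Given such a root $a$, I integrate the separable first-order equation $u'(x) = -\sqrt{\lambda u^2 + \f2p u^p}$ (the negative branch, as $u$ decreases) with datum $u(0) = a$: the right-hand side is locally Lipschitz for $u > 0$, so the profile on $\R^+$ is uniquely determined, and even reflection extends it to $\R$. Since $\lambda > 0$, near $u = 0$ the integrand behaves like $(\sqrt\lambda\, u)^{-1}$, whose primitive diverges logarithmically; thus $u$ reaches $0$ only as $x \to +\infty$, with exponential decay $u \sim e^{-\sqrt\lambda\, x}$, which secures $u \in H^1(\R)$. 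Checking that this correspondence is exactly one-to-one, and that no further solutions can appear, is the step that will require the most care.

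Finally I would analyze $f$ on $(0, +\infty)$. A direct computation shows that $f$ has a single critical point $a_*$, solving $a^{2q-p-2} = \f{4(p-2)}{p(q-2)}$, which is a strict maximum when $q < \f p2 + 1$ and a strict minimum when $q > \f p2 + 1$. In case (i), with $q > \f p2 + 1$, one has $f(a) \to 0^-$ as $a \to 0^+$ and $f(a) \to +\infty$ as $a \to +\infty$, with a negative minimum in between, so $f(a) = \lambda$ has exactly one root for every $\lambda > 0$. In case (ii), with $q < \f p2 + 1$, instead $f(a) \to 0^+$ as $a \to 0^+$ and $f(a) \to -\infty$ as $a \to +\infty$, with a positive maximum $f(a_*) =: \overline{\lambda}_{p,q}$, yielding two roots for $\lambda \in (0, \overline{\lambda}_{p,q})$, one for $\lambda = \overline{\lambda}_{p,q}$, and none for $\lambda > \overline{\lambda}_{p,q}$. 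In the borderline case (iii), with $q = \f p2 + 1$, the two exponents coincide and $f$ collapses to the monomial $f(a) = \f{p-8}{4p} a^{p-2}$, whose monotonicity is dictated by the sign of $p - 8$: for $p > 8$ it is strictly increasing from $0$ to $+\infty$, so $f(a) = \lambda$ has a unique root, whereas for $p \le 8$ one has $f(a) \le 0$, leaving no positive root for any $\lambda > 0$. Reading off the number of roots in each regime gives the claimed counts and closes the proof.
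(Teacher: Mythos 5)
Your proof is correct, but it takes a genuinely different route from the paper's. The paper integrates the first integral \eqref{eq:emec} explicitly (via an arccoth primitive from a table of integrals) to obtain the closed-form profile \eqref{eq:exp_ul>0}, parametrized by a translation parameter, and then converts the jump condition into the transcendental equation \eqref{eq:condt} in the variable $t=\mathrm{coth}\left(\frac{p-2}{2}\sqrt{\lambda}\,a\right)\in(1,+\infty)$, counting intersections of $f(t)$ with the level $g(\lambda)$. You never integrate the ODE at all: evaluating \eqref{eq:emec} at the origin and coupling it with the jump condition (using evenness from Lemma \ref{lem:statQual}) gives the purely algebraic relation $\lambda=\frac14 u(0)^{2q-4}-\frac2p u(0)^{p-2}$, after which the trichotomy follows from one-variable calculus on this function of $u(0)$; your shooting argument (local Lipschitz continuity of the vector field for $u>0$, plus the logarithmic divergence near $u=0$ ensuring exponential decay rather than finite-time vanishing) correctly establishes that positive $H^1(\R)$ solutions are in bijection with the positive roots, and indeed your relation is equivalent to \eqref{eq:condt} under the substitutions $u(0)^{q-2}=2\sqrt{\lambda}\,t$ and $u(0)^{p-2}=\frac{p\lambda}{2}(t^2-1)$. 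Your approach is shorter and more elementary for the counting statement itself, and it even yields an explicit formula for the threshold $\overline{\lambda}_{p,q}$ as the maximum of your $f$. What it does not produce, however, is the explicit solution formula and the parametrization by $t$, which the paper relies on heavily afterwards: the mass computations \eqref{eq:mass1} and \eqref{eq:mut}, the monotonicity analysis of Lemma \ref{lem:mon_mu}, the continuity statement of Remark \ref{rem:cont_sol}, and the energy computation in the proof of Theorem \ref{thm:gs3} all use that machinery. So within the architecture of the paper the heavier proof is not wasted effort, whereas as a standalone proof of this lemma yours is arguably cleaner.
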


\begin{proof}
	First, let $u\in H^1(\R)$ be a positive solution of \eqref{nls-nomass} for some $\lambda>0$. By Lemma \ref{lem:statQual} and \eqref{eq:emec},
	\[
	1=-\frac{u'(t)}{\sqrt{\lambda u(t)^2+\frac2p u(t)^p}},\qquad\forall t>0.
	\]
	Integrating on $(0,\overline{x})$, with $\overline{x}>0$, the change of variable $s=\big(\frac{2}{p\lambda}\big)^{\frac{1}{p-2}}u(t)$ and some simple computation yield
	\[
	 \overline{x}=-\frac{1}{\sqrt{\lambda}}\int_{\big(\frac{2}{p\lambda}\big)^{\frac{1}{p-2}}u(0)}^{\big(\frac{2}{p\lambda}\big)^{\frac{1}{p-2}}u(\overline{x})}\frac{\ds}{\sqrt{1+s^{p-2}}}.
	\]
	Thus, by the change of variable $y=s^{\frac{p-2}{2}}$ and \cite[Eq. 2.275.4, pag. 101]{GR},
	\[
	 \overline{x}=\frac{2}{\sqrt{\lambda}(p-2)}\left[\mathrm{arccoth}\left(\sqrt{\frac{2u(\overline{x})^{p-2}}{p\lambda}+1}\right)-\mathrm{arccoth}\left(\sqrt{\frac{2u(0)^{p-2}}{p\lambda}+1}\right)\right].
	\]
	Hence, suitably rearranging terms and recalling that $u$ is even, one obtains that it has to be of the form
	\begin{equation}
	\label{eq:exp_ul>0}
		u(x)=\left\{\frac{p\lambda}{2}\left[\mathrm{coth}\left(\frac{p-2}2\sqrt{\lambda}(|x|+a)\right)^2-1\right]\right\}^{\frac1{p-2}},\qquad\forall x\in\R,
	\end{equation}
	for some constant $a\in\R^+$ that satisfies
	\begin{equation}
	\label{eq:cond1}
	2\sqrt{\lambda}\,\mathrm{coth}\left(\frac{p-2}2\sqrt{\lambda}\,a\right)=u(0)^{q-2},
	\end{equation}
	where this last condition on the constant $a$ follows from the boundary condition in \eqref{nls-nomass} (and, again, the fact that $u$ is even). On the other hand, it is easy to check that a function defined as in \eqref{eq:exp_ul>0}, with $a\in\R^+$ satisfying \eqref{eq:cond1}, is a positive solution in $H^1(\R)$ of \eqref{nls-nomass}. Hence, in order to discuss existence and multiplicity of solutions of \eqref{nls-nomass} it suffices to investigate
	existence and multiplicity of the solutions $a\in\R^+$ of \eqref{eq:cond1}, on varying $\lambda\in\R^+$.

	To this aim, define the family of functions
	\begin{equation}
		\label{eq:t}
		t_\lambda(a):=\text{\normalfont coth}\left(\frac{p-2}2\sqrt{\lambda}\,a\right)
	\end{equation}
	and set $t=t_\lambda(a)$ in \eqref{eq:cond1} so that, in view of \eqref{eq:exp_ul>0}, one obtains
	\begin{equation}
		\label{eq:condt}
		\frac{t}{(t^2-1)^{\frac{q-2}{p-2}}}=\frac{1}{2}\left(\frac p2\right)^{\frac{q-2}{p-2}}\lambda^{\frac{2q-p-2}{2(p-2)}}\,.
	\end{equation}
	Note that, since $t_\lambda(\cdot)$ is a continuous and strictly decreasing bijection of $\R^+$ onto $(1,+\infty)$, for every $\lambda>0$, we can equivalently study existence and multiplicity of the solutions $t\in(1,+\infty)$ of \eqref{eq:condt}, on varying $\lambda\in\R^+$.
	Now, let
	\begin{equation}
	\label{eq:f e g}
	f(t):=\frac{t}{(t^2-1)^{\frac{q-2}{p-2}}}\qquad\text{and}\qquad g(\lambda):=\frac{1}{2}\left(\frac p2\right)^{\frac{q-2}{p-2}}\lambda^{\frac{2q-p-2}{2(p-2)}}\,,
	\end{equation}
	so that \eqref{eq:condt} becomes $f(t)=g(\lambda)$. Clearly,
	\[
	f'(t)=\frac{\frac{p+2-2q}{p-2}t^2-1}{(t^2-1)^{\frac{p+q-4}{p-2}}}\qquad\text{and}\qquad g'(\lambda)=\frac{2q-p-2}{4(p-2)}\left(\frac p2\right)^{\frac{q-2}{p-2}}\lambda^{\frac{2q-3p+2}{2(p-2)}}.
	\]

	If $\displaystyle q>\frac p2+1$, then $\displaystyle \lim_{t\to 1^+}f(t)=+\infty$, $\displaystyle \lim_{t\to+\infty}f(t)=0$, and $f'(t)<0$ for every $t>1$, whereas $\displaystyle \lim_{\lambda\to0^+}g(\lambda)=0$, $\displaystyle \lim_{\lambda\to+\infty}g(\lambda)=+\infty$, and $g'(\lambda)>0$ for every $\lambda>0$. Hence, for every fixed $\lambda>0$, \eqref{eq:condt} is satisfied for a unique value of $t>1$, which proves point (i).

	If, on the contrary, $ q<\frac p2+1$, then $\displaystyle \lim_{t\to1^+}f(t)=\lim_{t\to+\infty}f(t)=+\infty$ and $f'$ has a unique zero in $(1,+\infty)$, whereas $\displaystyle \lim_{\lambda\to0^+}g(\lambda)=+\infty$, $\displaystyle \lim_{\lambda\to+\infty}g(\lambda)=0$, and $g'(\lambda)<0$ for every $\lambda>0$. Hence, denoting by $\overline\lambda_{p,q}>0$ the unique solution of the equation $\displaystyle g(\lambda)=\min_{t\in(1,+\infty)}f(t)$, we obtain that \eqref{eq:condt} is satisfied for exactly two values of $t$ in $(1,+\infty)$ when $\lambda\in(0,\overline\lambda_{p,q})$, for exactly one value of $t$ in $(1,+\infty)$ when $\lambda=\overline\lambda_{p,q}$, and for no value of $t\in(1,+\infty)$ when $\lambda>\overline\lambda_{p,q}$, which proves point (ii).

	Finally, if $ q=\frac p2+1$, then
	\begin{equation}
	\label{eq:fgdiag}
	f(t)=\frac{t}{\sqrt{t^2-1}}\qquad\text{and}\qquad g(\lambda)=\frac{\sqrt{p}}{2\sqrt{2}}.
	\end{equation}
	In particular, $\displaystyle \lim_{t\to1^+}f(t)=+\infty$, $\displaystyle \lim_{t\to+\infty}f(t)=1$, and $f'(t)<0$ for every $t>1$, whereas $g(\lambda)$ is constant on $\R^+$. Hence, \eqref{eq:condt} is satisfied by a value of $t$ if and only if the quantity $\frac{\sqrt{p}}{2\sqrt{2}}$ is greater than 1, and in this case such value is unique. Since this requires $p>8$, also the proof of point (iii) is complete.
\end{proof}

\begin{lemma}
	\label{prop:l=0}
	Let $p,q>2$ and $\lambda=0$. Then, \eqref{nls-nomass} admits a positive solution in $H^1(\R)$ if and only if $p\in(2,6)$ and $ q\neq\frac p2+1$, and such solution is unique.
\end{lemma}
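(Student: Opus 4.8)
The plan is to follow the same scheme as Lemma \ref{prop:l>0}, exploiting the crucial simplification that setting $\lambda=0$ turns the first--order relation \eqref{eq:emec} into an explicitly integrable, power--law equation. By Lemma \ref{lem:statQual} it suffices to look for positive, even solutions that are strictly decreasing on $\R^+$; for such a $u$, relation \eqref{eq:emec} with $\lambda=0$ reads $u'(x)=-\sqrt{2/p}\,u(x)^{p/2}$ on $\R^+$. First I would separate variables and integrate this ODE, obtaining that any such solution must be of the form
\[
u(x)=\left[\frac{p-2}{\sqrt{2p}}\,(|x|+a)\right]^{-\frac{2}{p-2}},\qquad x\in\R,
\]
for some constant $a>0$ (needed for $u$ to be finite at the origin), and conversely that every such function is even, positive, and solves the differential equation in \eqref{nls-nomass} on $\R\setminus\{0\}$.

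Next I would determine for which parameters this profile lies in $H^1(\R)$. Since $u(x)\sim|x|^{-2/(p-2)}$ as $|x|\to\infty$, membership in $L^2(\R)$ forces $\frac{4}{p-2}>1$, that is $p<6$, whereas $u'\in L^2(\R)$ holds automatically for every $p>2$ (the decay of $u'$ is faster by one power, so the exponent is always integrable). Hence the $H^1$ constraint is exactly $p\in(2,6)$, which already accounts for the necessity of this condition.

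It remains to impose the nonlinear boundary condition in \eqref{nls-nomass}, which by evenness reads $-2u'(0^+)=u(0)^{q-1}$. Inserting $u'(0^+)=-\sqrt{2/p}\,u(0)^{p/2}$ (obtained from \eqref{eq:emec} evaluated at the origin) turns this into the single scalar equation
\[
u(0)^{\,q-\frac p2-1}=2\sqrt{\tfrac{2}{p}}\,.
\]
When $q\neq\frac p2+1$ the exponent is nonzero and the right--hand side is a fixed positive number, so this equation admits exactly one positive root $u(0)$, which in turn fixes $a$ uniquely; thus the solution exists and is unique. When $q=\frac p2+1$ the left--hand side collapses to $1$, so the equation becomes $1=2\sqrt{2/p}$, i.e.\ $p=8$, which is incompatible with the $H^1$ requirement $p<6$; hence no solution exists in this case. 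Combining the two regimes with the constraint $p\in(2,6)$ yields precisely the claimed characterization.

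The computations are entirely elementary once the explicit profile is found, so I do not expect a genuine obstacle; the only points requiring care are the correct bookkeeping of the multiplicative constants in the integration and the clean reduction of the boundary condition to the scalar identity above, from which the dichotomy between $q\neq\frac p2+1$ and $q=\frac p2+1$ (the latter excluded via $p=8\notin(2,6)$) can be read off immediately.
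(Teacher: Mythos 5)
Your proof is correct and follows essentially the same route as the paper: integrate \eqref{eq:emec} with $\lambda=0$ to get the explicit power--law profile $u(x)=\bigl[\tfrac{p-2}{\sqrt{2p}}(|x|+a)\bigr]^{-2/(p-2)}$, observe that $H^1(\R)$ membership forces $p<6$, and reduce the boundary condition to a scalar equation whose solvability hinges on whether the exponent $q-\tfrac p2-1$ vanishes, with the degenerate case $q=\tfrac p2+1$ excluded because it forces $p=8\notin(2,6)$. The only cosmetic difference is that you write the scalar equation in the unknown $u(0)$ while the paper (equation \eqref{eq:condl=0}) writes it in the unknown $a$; these are equivalent since $u(0)$ and $a$ determine each other bijectively.
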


\begin{proof}
	Arguing as in the proof of Lemma \ref{prop:l>0}, one can check that $u\in H^1(\R)$ is a positive solution of \eqref{nls-nomass} with $\lambda=0$ if and only if $p<6$ and $u$ is of the form
	\begin{equation}
		\label{eq:expul=0}
		u(x)=\frac{c_p}{(|x|+a)^{\frac2{p-2}}},\qquad\forall x\in\R,\qquad\text{with}\quad c_p:=\left(\frac{\sqrt{2p}}{p-2}\right)^{\frac{2}{(p-2)}},
	\end{equation}
	for some constant $a\in\R^+$ such that
	\begin{equation}
		\label{eq:condl=0}
	c_p^{q-2}=\frac{4a^{\frac{2q-p-2}{p-2}}}{p-2},
	\end{equation}
	(note that, for $p\geq6$, positive solutions are still of the form \eqref{eq:expul=0}, but do not belong to $L^2(\R)$). Hence, existence and multiplicity of positive solutions of \eqref{nls-nomass} can be reduced again to existence and multiplicity of solutions $a\in\R^+$ to \eqref{eq:condl=0}. If $ q\neq\frac p2+1$, then it is immediate to see that \eqref{eq:condl=0} has a unique positive solution. Conversely, if $ q=\frac p2+1$, then \eqref{eq:condl=0} has no solution whenever $p\in(2,6)$ (it is always fulfilled when $p=8$, but this give rise to a solution that is not in $H^1(\R)$).
\end{proof}

Finally, we can sum up all the previous results to obtain

\begin{proof}[Proof of Theorem \ref{thm:nomass}]
 The first features of the solutions and point (i) follow by Lemma \ref{lem:statQual}; whereas, points (ii) and (iii) follow by Lemmas \ref{prop:l=0} and \ref{prop:l>0}, respectively.
\end{proof}

\section{Proofs of Theorems \ref{thm:exsol}--\ref{thm:nonun}}
\label{sec:mass}

In this section we present the proofs of Theorems \ref{thm:exsol}--\ref{thm:nonun}. Note that throughout the section we focus only on positive and even solutions of \eqref{nlse}, strictly decreasing on $\R^+$, since this is not restrictive by Lemma \ref{lem:statQual}.

The proofs of the mentioned results relies on a detailed analysis of the masses of the stationary states identified in the previous section. In the case $\lambda=0$, letting $u_0$ be the sole positive solution in $H^1(\R)$ of \eqref{nls-nomass} provided by Lemma \ref{prop:l=0}, a straightforward computation shows
\begin{equation}
 \label{eq-mu0}
 \|u_0\|_2^2=\frac{2^{\frac{3(q-p+2)}{2q-p-2}}p^{\frac{q-4}{2q-p-2}}}{6-p}=:\mu_0.
\end{equation}
In the case $\lambda>0$ the computation is far from being immediate and, to obtain some information, we essentially use the fact that, by the argument of the proof of Lemma \ref{prop:l>0}, given $p,q>2$, each solution $u$ of \eqref{nls-nomass}, with $\lambda>0$ fixed, corresponds to a value $t\in(1,+\infty)$ that satisfies \eqref{eq:condt}. In particular, we exploit this relation to obtain an explicit formula for the mass of $u$ in terms of the associated value of $t$.

Preliminarily, recall that, by \eqref{eq:exp_ul>0}, a solution $u$ of \eqref{nls-nomass} with $\lambda>0$ satisfies
\[
\|u\|_{2}^2=2\int_{\R^+}|u(x)|^2\,dx= 2\left(\frac{p\lambda}2\right)^{\frac2{p-2}}\int_a^{+\infty}\left(\mathrm{coth}\left(\frac{p-2}{2}\sqrt{\lambda}\,x\right)^2-1\right)^{\frac2{p-2}}\,dx\,,
\]
By the change of variable $\displaystyle s=\mathrm{coth}\left(\frac{p-2}2\sqrt{\lambda}\,x\right)$, one can check that the previous formula reads
\begin{equation}
\label{eq:mass1}
\|u\|_{2}^2=\frac{2^{\frac{2p-6}{p-2}}p^{\frac{2}{p-2}}}{p-2}\lambda^{\frac{6-p}{2(p-2)}}\int_1^t(s^2-1)^{\frac{4-p}{p-2}}\,ds\,,
\end{equation}
where $t$ is the number in $(1,+\infty)$ associated to $u$ by \eqref{eq:condt} (in view of \eqref{eq:t}), which depends in general by $\lambda$. However, it is worth recalling that, whenever $p>8$ and $ q=\frac p2+1$, the proof of Lemma \ref{prop:l>0} shows that $t$ depends in fact only on $p$ and $q$, and is independent of $\lambda$. This means that, in this regime of $p,q$, the mass of the solution $u$ of \eqref{nls-nomass}, with $\lambda>0$ fixed, is given by
\begin{equation}
	\label{eq:mu_diag}
	\|u\|_{2}^2=M_{p,q}\,\lambda^{\frac{6-p}{2(p-2)}}=:\mu(\lambda),
\end{equation}
with
\[
M_{p,q}:=\frac{2^{\frac{2p-6}{p-2}}p^{\frac{2}{p-2}}}{p-2}\int_1^t(s^2-1)^{\frac{4-p}{p-2}}\,ds\,.
\]
Thus, we can start characterizing the mass of the stationary states by this last, and easy, case.

\begin{lemma}
	\label{lem:mu_diag}
	Let $p>8$ and $ q=\frac p2+1$. Then, for every $\mu>0$, there exists a unique positive solution of \eqref{nls-nomass} in $H_\mu^1(\R)$ ($H_\mu^1(\R)$ being defined by \eqref{eq-vincolo}).
\end{lemma}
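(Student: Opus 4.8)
The plan is to combine the existence-and-uniqueness statement of Lemma~\ref{prop:l>0}(iii) with the explicit mass formula~\eqref{eq:mu_diag}, thereby reducing the whole assertion to an elementary analysis of the scalar map $\lambda\mapsto\mu(\lambda)$.

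First I would recall that, since $p>8$ and $q=\frac p2+1$, Lemma~\ref{prop:l>0}(iii) guarantees that for every $\lambda>0$ problem~\eqref{nls-nomass} possesses a unique positive solution $u\in H^1(\R)$, whose associated value $t\in(1,+\infty)$ in~\eqref{eq:condt} is the unique root of $\frac{t}{\sqrt{t^2-1}}=\frac{\sqrt p}{2\sqrt2}$ coming from~\eqref{eq:fgdiag}, hence a fixed number depending only on $p$ and independent of $\lambda$. This is exactly the feature that makes the present regime tractable, since it yields the closed form $\|u\|_2^2=\mu(\lambda)=M_{p,q}\,\lambda^{\frac{6-p}{2(p-2)}}$ of~\eqref{eq:mu_diag}.

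Next I would verify that $M_{p,q}$ is a well-defined positive constant, i.e.\ that the integral $\int_1^t(s^2-1)^{\frac{4-p}{p-2}}\,ds$ converges. Near $s=1$ the integrand behaves like $(s-1)^{\frac{4-p}{p-2}}$, and since $\frac{4-p}{p-2}>-1$ for every $p>2$, the improper integral is finite; as $1<t<+\infty$, one gets $M_{p,q}\in(0,+\infty)$.

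Finally, because $p>8>6$ the exponent $\frac{6-p}{2(p-2)}$ is strictly negative, so $\mu(\cdot)$ is continuous and strictly decreasing on $(0,+\infty)$ with $\lim_{\lambda\to0^+}\mu(\lambda)=+\infty$ and $\lim_{\lambda\to+\infty}\mu(\lambda)=0$; thus $\mu$ is a bijection of $(0,+\infty)$ onto itself. Fixing any $\mu>0$, there is therefore exactly one $\lambda>0$ with $\mu(\lambda)=\mu$, and the corresponding positive solution from Lemma~\ref{prop:l>0}(iii) is the desired element of $H_\mu^1(\R)$. For uniqueness, I would observe that any positive solution of~\eqref{nls-nomass} with mass $\mu$ must have $\lambda>0$ (the case $\lambda<0$ is excluded by Lemma~\ref{lem:statQual} and the case $\lambda=0$ by Lemma~\ref{prop:l=0}, since here $q=\frac p2+1$), so it corresponds to some $\lambda$ with $\mu(\lambda)=\mu$; injectivity of $\mu(\cdot)$ forces this $\lambda$ to be unique, and Lemma~\ref{prop:l>0}(iii) then forces the solution to be unique as well. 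No genuine obstacle arises; the only point demanding a little care is the convergence of the integral defining $M_{p,q}$, which the structural inequality $\frac{4-p}{p-2}>-1$ secures.
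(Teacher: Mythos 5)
Your proposal is correct and follows essentially the same route as the paper: the paper's proof consists precisely of the observation that, since $t$ in \eqref{eq:condt} is independent of $\lambda$ when $p>8$ and $q=\frac p2+1$, the map $\lambda\mapsto\mu(\lambda)$ of \eqref{eq:mu_diag} is a bijection of $\R^+$ onto itself. Your write-up merely makes explicit the supporting details the paper leaves implicit (finiteness of $M_{p,q}$, negativity of the exponent $\frac{6-p}{2(p-2)}$, and the exclusion of $\lambda\leq0$ via Lemmas \ref{lem:statQual} and \ref{prop:l=0}), all of which are accurate.
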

\begin{proof}
	It is sufficient to note that the map $\lambda\mapsto\mu(\lambda)$ defined in \eqref{eq:mu_diag} is a bijection of $\R^+$ onto itself.
\end{proof}

Let us, now, focus on the case $ q\neq\frac p2+1$. Using the one--to--one correspondence between solutions $u$ of \eqref{nls-nomass}, with $\lambda>0$ fixed, and solutions $t\in(1,+\infty)$ of \eqref{eq:condt}, the relation between $t$ and $\lambda$ given again by \eqref{eq:condt} and the definition of $f(t)$ given by \eqref{eq:f e g}, we can rewrite the right hand side of \eqref{eq:mass1} as a function of $t$ (note that this is possible since the function $g$ defined by \eqref{eq:f e g} is a bijection of $\R^+$ onto itself whenever $ q\neq\frac p2+1$). Precisely,
\[
 \|u\|_2^2=\mu(t)
\]
with $\mu:(1,+\infty)\to\R^+$ given by
\begin{equation}
\label{eq:mut}
\mu(t):=C_{p,q}f(t)^{\frac{6-p}{2q-p-2}}I(t),
\end{equation}
where
\begin{equation}
\label{eq:It}
I(t):=\int_1^t (s^2-1)^{\frac{4-p}{p-2}}\,ds\qquad\text{and}\qquad C_{p,q}:=\frac{2^{\frac{3(q-p+2)}{2q-p-2}}p^{\frac{q-4}{2q-p-2}}}{p-2}.
\end{equation}
Note also that $I(t)$ is well defined since $\frac{4-p}{p-2}>-1$ for every $p>2$, and that, in view of \eqref{eq-mu0}, if $p\in(2,6)$ and $q>2$, (with $\displaystyle q\neq\frac{p}{2}+1$), then
\begin{equation}
\label{eq-mucpq}
 \mu_0=\frac{p-2}{6-p}\,C_{p,q}.
\end{equation}

It is, thus, crucial to establish the qualitative behavior of $\mu(t)$ in $(1,+\infty)$. We begin with the analysis of its asymptotic behaviours.

\begin{lemma}
	\label{lem:mu_as}
	Let $p,q>2$ with $ q\neq\frac p2+1$. It holds that
	\begin{equation}
	\label{eq-as-t1}
	\mu(t)\sim p^{\frac{q-4}{2q-p-2}}2^{\frac{6-p}{2q-p-2}}(t-1)^{\frac{q-4}{2q-p-2}},\qquad\text{as}\quad t\to1^+.
	\end{equation}
	Moreover, there exist $c_{p,q}>0$ (depending on $p,q$) such that
	\begin{equation}
	\label{eq-as-tinf}
	\mu(t)\sim\begin{cases}
		\mu_0, & \text{if }p\in(2,6),\\
		C_{6,q}\log(t), & \text{if }p=6,\\
		c_{p,q}\,t^{\frac{p-6}{p-2}}, & \text{if }p>6,
	\end{cases}
	\qquad\text{as}\quad t\to+\infty,
	\end{equation}
	with $\mu_0$ defined by \eqref{eq-mu0} and $C_{6,q}$ defined by \eqref{eq:It}.
\end{lemma}

\begin{proof}
	By \eqref{eq:mut}, the analysis of the asymptotic behaviour of $\mu(t)$ relies on those of $f(t)$ (defined by \eqref{eq:f e g}) and $I(t)$ (defined by \eqref{eq:It}). As for $f(t)$, it is easy to check that
	\begin{equation}
	\label{eq:as_f}
	f(t)\sim\begin{cases}
		2^{-\frac{q-2}{p-2}}(t-1)^{-\frac{q-2}{p-2}}, & \quad\text{as}\quad t\to1^+,\\
		t^{\frac{p+2-2q}{p-2}}, & \quad\text{as}\quad t\to+\infty.
	\end{cases}
	\end{equation}
	Let us turn to $I(t)$. A straightforward application of de l'H$\hat{\text{o}}$pital shows that
	\begin{equation}
	\label{eq:It1+}
	I(t)\sim 2^{\frac{6-2p}{p-2}}(p-2)(t-1)^{\frac2{p-2}},\qquad\text{as}\quad t\to1^+,
	\end{equation}
	which, together with \eqref{eq:as_f}, yields \eqref{eq-as-t1}.

	Focus, then, on the case $t\to+\infty$. First, discuss powers $p\in(2,6)$. Note that, if $p=4$, then $I(t)=t-1$, that combined with \eqref{eq:as_f} gives $\displaystyle f(t)^{\frac{6-p}{2q-p-2}}I(t)\to1$, as $t\to+\infty$, whence we conclude in view of \eqref{eq-mucpq}. Conversely, if $p\in(2,4)$, then we have
	\[
	\int_1^t(s-1)^{\frac{2(4-p)}{p-2}}\,ds \leq I(t) \leq \int_1^t(s+1)^{\frac{2(4-p)}{p-2}}\,ds,
	\]
	whereas, if $p\in(4,6)$, then it holds
	\begin{equation}
	\label{eq-stimaI2}
	\int_1^t(s+1)^{\frac{2(4-p)}{p-2}}\,ds \leq I(t) \leq \int_1^t(s-1)^{\frac{2(4-p)}{p-2}}\,ds .
	\end{equation}
	Computing explicitly the integrals in the previous estimates and coupling again with \eqref{eq:as_f} entail, for every $p\in(2,6)$,
	\[
	f(t)^{\frac{6-p}{2q-p-2}}I(t)\sim \frac{p-2}{6-p}\qquad\text{as}\quad t\to+\infty,
	\]
	whence, again, we conclude by \eqref{eq-mucpq}.

	Let, then, $p\geq 6$ and write
	\[
	I(t)=\int_1^2(s^2-1)^{\frac{4-p}{p-2}}\,ds+\int_2^t(s^2-1)^{\frac{4-p}{p-2}}\,ds.
	\]
	As the former integral is finite for every $p\geq6$, it is sufficient to focus on the latter. Here, using (again) de l'H$\hat{\text{o}}$pital for $p=6$ and the fact that $\displaystyle \frac{4-p}{p-2}<-\frac12$ for $p>6$, we find that
	\[
	\int_2^t(s^2-1)^{\frac{4-p}{p-2}}\,ds\sim\begin{cases}
		\log(t), & \text{if }p=6,\\
		K_p, & \text{if }p>6,
	\end{cases}
	\qquad\text{as}\quad t\to+\infty,
	\]
	so that
	\[
	I(t)\sim\begin{cases}
		\log(t) & \text{if }p=6\\
		K_p, & \text{if }p>6,
	\end{cases}
	\qquad\text{as}\quad t\to+\infty,
	\]
	for a suitable constant $K_{p}>0$. Combining again with \eqref{eq:as_f}, the proof is complete.
\end{proof}

In the next lemma we establish the regimes of $p,q$ where $\mu(t)$ is strictly increasing.

\begin{lemma}
	\label{lem:mon_mu}
	Let $p>2$. If
	\begin{equation}
	\label{eq-assincreasing}
	q\in(2,4]\text{ and }q<\frac p2+1\,,\qquad\text{or}\qquad q\geq4\text{ and }q>\frac p2+1\,,
	\end{equation}
	the map $t\mapsto\mu(t)$ defined in \eqref{eq:mut} is strictly increasing on $(1,+\infty)$.
\end{lemma}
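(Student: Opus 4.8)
The plan is to reduce the claim to the positivity of a single explicit function and then split according to whether $p\ge 6$ or $p<6$, the latter being the genuinely delicate case.

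\emph{Reduction.} Since $C_{p,q}$, $f(t)$ and $I(t)$ are all positive, I would differentiate $\log\mu$ from \eqref{eq:mut}. Setting $\alpha:=\frac{6-p}{2q-p-2}$, using the derivative of $f$ in \eqref{eq:f e g} together with $I'(t)=(t^2-1)^{\frac{4-p}{p-2}}$ and the identity $\alpha(p+2-2q)=p-6$, and then multiplying $\mu'/\mu$ by the positive quantity $(p-2)\,t\,(t^2-1)\,I(t)$, the sign of $\mu'(t)$ collapses to the sign of
\[
\Psi(t):=t\,(t^2-1)^{\frac{2}{p-2}}+\frac{(p-6)t^2-(p-2)\alpha}{p-2}\,I(t).
\]
Hence it suffices to prove $\Psi>0$ on $(1,+\infty)$; observe that $\Psi(1)=0$. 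A direct case check shows that under \eqref{eq-assincreasing} one always has $\alpha\le 1$ (first branch: $\alpha<0$ if $p<6$ and $0\le\alpha\le1$ if $p\ge6$, using $q\le4$; second branch: $0<\alpha\le1$ if $p<6$ and $\alpha<0$ if $p>6$), so that $2t^2-1-\alpha\ge 2(t^2-1)>0$ throughout.

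\emph{The easy regime $p\ge 6$.} Differentiating gives
\[
\Psi'(t)=(2t^2-1-\alpha)\,(t^2-1)^{\frac{4-p}{p-2}}+\frac{2(p-6)}{p-2}\,t\,I(t),
\]
where I used $\tfrac{p+2}{p-2}+\tfrac{p-6}{p-2}=2$ to simplify the coefficient of $(t^2-1)^{\frac{4-p}{p-2}}$. When $p\ge6$ both summands are nonnegative, the first strictly, so $\Psi'>0$; together with $\Psi(1)=0$ this yields $\Psi>0$ and the monotonicity.

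\emph{The hard regime $2<p<6$ (the main obstacle).} Here the second term of $\Psi'$ is \emph{negative}, and as $t\to+\infty$ the two summands of $\Psi$ cancel at leading order (consistently with $\mu(t)\to\mu_0$ in \eqref{eq-as-tinf}), so $\Psi$ need not be monotone and crude asymptotic estimates are inconclusive. My plan is a sign-at-the-zero argument. First, from \eqref{eq:It1+} one obtains $\Psi(t)\sim 2^{\frac{6-2p}{p-2}}(p-2)(1-\alpha)(t-1)^{\frac{2}{p-2}}$ as $t\to1^+$, which is positive when $\alpha<1$ (the borderline $q=4$, i.e.\ $\alpha=1$, region $G$, being handled separately as a limiting case). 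Second, I would show $\Psi$ has no zero in $(1,+\infty)$: at a putative zero $t_*$ one has $\alpha-\tfrac{p-6}{p-2}t_*^2>0$ (forced by $I(t_*)>0$) and $I(t_*)=\dfrac{t_*(t_*^2-1)^{2/(p-2)}}{\alpha-\frac{p-6}{p-2}t_*^2}$; substituting this value of $I(t_*)$ into $\Psi'(t_*)$ reduces its sign to that of the quadratic
\[
Q(t_*)=\frac{2(6-p)(p+q-6)}{(p-2)(2q-p-2)}\,t_*^2-\alpha(1+\alpha).
\]
If $Q(t_*)>0$, then $\Psi'(t_*)>0$, so $\Psi$ may cross $0$ only upward; this is incompatible with $\Psi>0$ immediately past $t=1$, hence $\Psi$ has no zero and is positive on $(1,+\infty)$, giving strict monotonicity of $\mu$.

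The crux—and the step I expect to cost the most effort—is the positivity of $Q$ at the zeros of $\Psi$. Its leading coefficient changes sign across the line $p+q=6$, so $Q>0$ is not automatic for every $t$; establishing it requires genuinely exploiting that $t_*$ is an actual zero of $\Psi$ and confining $t_*$ through the endpoint data, namely $\mu(1^+)$ together with $\mu(+\infty)=\mu_0>0$ from Lemma~\ref{lem:mu_as}, plus a dedicated treatment of the limiting case $q=4$. This algebraic-plus-confinement analysis is where the real work lies.
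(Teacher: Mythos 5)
Your reduction is correct, and it is in fact the same as the paper's: your $\Psi(t)$ equals $(t^2-1)^{\frac{2}{p-2}}h(t)$, where $h$ is the function in \eqref{eq:h} whose positivity the paper's proof establishes; your identity for $\Psi'$ and the formula $Q$ at a zero are also correct, and your treatment of $p\geq6$ (covering both branches of \eqref{eq-assincreasing}) is complete and even cleaner than the paper's Part~2. The genuine gap is the case $2<p<6$, which you explicitly leave open, and the situation is worse than ``the real work lies there'': the mechanism you propose provably cannot work in part of that regime. Take $p=5$, $q=\frac52$, which satisfies the first branch of \eqref{eq-assincreasing}. Then $\alpha=-\frac12$ and your quadratic is $Q(t)=-\frac12t^2+\frac14$, which is negative for \emph{every} $t\geq1$, in particular on the whole region $t_*^2>\frac{p-2}{p+2-2q}=\frac32$ where zeros of $\Psi$ are admissible. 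Hence at any putative zero one gets $\Psi'(t_*)<0$: every zero is a strict downcrossing, which is perfectly compatible with $\Psi>0$ just past $t=1$, so no contradiction can ever arise, and no ``confinement'' of $t_*$ can restore the argument since $Q<0$ on the entire admissible set (this failure is generic in the first branch whenever $p+q>6$, e.g.\ for all admissible $q$ when $p=5$). To salvage the strategy there you would need an independent proof that $\Psi(t)>0$ for all large $t$, which requires second-order asymptotics because the two terms of $\Psi$ cancel at leading order as $t\to+\infty$; none of this is in your proposal. The deferred borderline $q=4$ (where $\alpha=1$ and your leading-order expansion at $t=1^+$ vanishes) is likewise not done.

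For comparison, the paper never uses a zero-crossing argument in the hard regime: it proves $h>0$ directly, via an integration-by-parts identity for $I(t)$ when $p\in(2,4)$, via the explicit bound $I(t)\leq\frac{p-2}{6-p}(t-1)^{\frac{6-p}{p-2}}$ when $p\in[4,6)$, and, when $p\in(2,6)$, $q\geq4$, via a comparison of the two monotone functions $\varphi_1,\varphi_2$ of \eqref{eq:phi12}. Incidentally, in that last regime your scheme would actually close: the positivity threshold of $Q$ is $\frac{(p-2)(q-p+2)}{(p+q-6)(2q-p-2)}$, and $(p+q-6)(2q-p-2)-(p-2)(q-p+2)=2(q-2)(q-4)\geq0$ precisely when $q\geq4$, so $Q(t)>0$ for all $t>1$ there; but you neither carried out this verification nor addressed the branch where it fails, which is exactly the part of the lemma that required the paper's hands-on estimates.
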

\begin{proof}
	By definition, $\mu(t)$ is of class $C^1$ on $(1,+\infty)$ and
	\[
	\mu'(t)=C_{p,q}\frac{f(t)^{\frac{2(4-q)}{2q-p-2}}}{(t^2-1)^{\frac{p+q-6}{p-2}}}h(t)\,,
	\]
	where $C_{p,q}$ is as in \eqref{eq:It} and
	\begin{equation}
	\label{eq:h}
	h(t):=\frac{6-p}{p-2}\frac{\frac{p-2}{p+2-2q}-t^2}{(t^2-1)^{\frac2{p-2}}}I(t)+t\,.
	\end{equation}
	Since $f(t)$ and $t^2-1$ are strictly positive for every $t\in(1,+\infty)$, in order to conclude it suffices to establish that, if $p,\,q$ satisfy \eqref{eq-assincreasing}, then $h(t)>0$ for every $t>1$. Note preliminarily this is straightforward when $p\geq 6$ and $ q>\frac p2+1$. We divide the discussion on the remaining range of powers in three parts.
	
	\smallskip
	\emph{Part 1: $p\in(2,6)$ and $ 2<q<\frac p2+1$}. Since $ t^2\leq\frac{p-2}{p+2-2q}$ immediately yields $h(t)>0$, it is left to study the case $ t^2>\frac{p-2}{p+2-2q}$. Assume first $p<4$. Integrating by parts we have
	\[
	I(t)= t(t^2-1)^{\frac{4-p}{p-2}}-\frac{2(4-p)}{p-2}\int_1^t s^2(s^2-1)^{\frac{6-2p}{p-2}}\,ds.
	\]
	Since, for every $s>1$,
	\[
	s^2(s^2-1)^{\frac{6-2p}{p-2}}=(s^2-1)^{\frac{4-p}{p-2}}+(s^2-1)^{\frac{6-2p}{p-2}},
	\]
	plugged into the previous identity we obtain
	\[
	I(t)=\frac{p-2}{6-p}t(t^2-1)^{\frac{4-p}{p-2}}-\frac{2(4-p)}{6-p}\int_1^t(s^2-1)^{\frac{6-2p}{p-2}}\,ds.
	\]
	Thus, combined with \eqref{eq:h}, we find
	\[
	h(t)=\frac{2(q-2)t}{(p+2-2q)(t^2-1)}-\frac{2(4-p)}{p-2}\frac{\frac{p-2}{p+2-2q}-t^2}{(t^2-1)^{\frac2{p-2}}}\int_1^t(s^2-1)^{\frac{6-2p}{p-2}}\,ds,
	\]
	which shows that, $h(t)>0$. Assume, then, $p\in[4,6)$. By the second inequality in \eqref{eq-stimaI2} we have $I(t)\leq\frac{p-2}{6-p}(t-1)^{\frac{6-p}{p-2}}$ for every $t>1$, so that
	\begin{multline*}
	h(t)\geq\frac{\frac{p-2}{p+2-2q}-t^2}{(t^2-1)^{\frac2{p-2}}}(t-1)^{\frac{6-p}{p-2}}+t>\frac{1-t^2}{(t^2-1)^{\frac2{p-2}}}(t-1)^{\frac{6-p}{p-2}}+t\\
	=t-(t^2-1)^{\frac{p-4}{p-2}}(t-1)^{\frac{6-p}{p-2}}=t-(t+1)^{\frac{p-4}{p-2}}(t-1)^{\frac{2}{p-2}},
	\end{multline*}
	the second inequality coming from the fact that $ \frac{p-2}{p+2-2q}>1$. Since $\frac{p-4}{p-2}<\frac12$ and $\frac{2}{p-2}<\frac12$, the right--hand side of the previous estimate is strictly positive for every $t>1$ and so, also in this case, $h(t)>0$, which concludes the proof of Part 1.
	
	\smallskip
	\textit{Part 2: $p\geq 6$ and $ 2<q\leq4$.}  When $p=6$ there is nothing to prove, since $h(t)=t>0$ on $(1,+\infty)$. Assume, then, $p>6$. In this case, it is obvious that $h(t)>0$, whenever $ t^2\geq \frac{p-2}{p+2-2q}$. Let us focus, then, on the values of $t>1$ such that $ t^2<\frac{p-2}{p+2-2q}$. Since
	\[
	I(t)<2^{\frac{4-p}{p-2}}\int_1^t(s-1)^{\frac{4-p}{p-2}}\ds=2^{\frac{6-2p}{p-2}}(p-2)(t-1)^{\frac{2}{p-2}},
	\]
	reminding that $ 1<t< \sqrt{\frac{p-2}{p+2-2q}}$, we obtain
	\[
	h(t)>2^{\frac{6-2p}{p-2}}(6-p)\frac{\frac{p-2}{p+2-2q}-t^2}{(t+1)^{\frac2{p-2}}}+t> 2^{\frac{6-2p}{p-2}}(6-p)\frac{\frac{p-2}{p+2-2q}-1}{2^{\frac2{p-2}}}+1=\frac{(p-2)(4-q)}{2(p+2-2q)}\geq0,
	\]
	where we used that the function $ s\mapsto 2^{\frac{6-2p}{p-2}}(6-p)\frac{\frac{p-2}{p+2-2q}-s^2}{(s+1)^{\frac2{p-2}}}+s$ is strictly increasing on the interval $ \left(1,\sqrt{\frac{p-2}{p+2-2q}}\right)$.
	
	\smallskip
	\textit{Part 3: $p\in(2,6)$ and $q\geq4$.} By \eqref{eq:h} and the assumptions on $p,\,q$, in order to prove that $h(t)>0$ for every $t>1$, it is enough to check that
	\begin{equation}
	\label{eq:phi12}
	\varphi_1(t):=\frac{I(t)}{(t^2-1)^{\frac2{p-2}}}<\frac{p-2}{6-p}\frac{t}{t^2+\frac{p-2}{2q-p-2}}=:\varphi_2(t),\qquad\forall t>1.
	\end{equation}
	Observe that $ \varphi_2(1)=\frac{(p-2)(2q-p-2)}{2(6-p)(q-2)}$, whereas \eqref{eq:It1+} gives $\displaystyle \lim_{t\to1^+}\varphi_1(t)=\tfrac{p-2}{4}$, so that it is easy to see that $\displaystyle\varphi_2(1)\geq\lim_{t\to1^+}\varphi_1(t)$ whenever $p\in(2,6)$ and $q\geq4$. Moreover,
	\[
	\varphi_1'(t)=\frac{1}{t^2-1}\left(1-\frac{4tI(t)}{(p-2)(t^2-1)^{\frac2{p-2}}}\right)\qquad\text{and}\qquad\varphi_2'(t)=\frac{p-2}{6-p}\frac{\frac{p-2}{2q-p-2}-t^2}{\left(t^2+\frac{p-2}{2q-p-2}\right)^2}\,.
	\]
	On the one hand, this immediately implies that $\varphi_2$ is strictly increasing on $\left(1,\sqrt{\frac{(p-2)}{(2q-p-2)}}\right)$ and strictly decreasing on $\left(\sqrt{\frac{(p-2)}{(2q-p-2)}},+\infty\right)$. On the other hand, note that $I(t)\sim\frac{p-2}{4}\frac{(t^2-1)^{\frac2{p-2}}}{t}$, as $t\to1^+$, and
	\[
	I'(t)=(t^2-1)^{\frac{4-p}{p-2}}> \frac{(t^2-1)^{\frac{4-p}{p-2}}}{t}-\frac{p-2}{4}\frac{(t^2-1)^{\frac2{p-2}}}{t^2}=\left(\frac{p-2}{4}\frac{(t^2-1)^{\frac2{p-2}}}{t}\right)'
	\]
	for every $t>1$. Hence, $\varphi_1'(t)<0$ for every $t>1$, so that $\varphi_1$ is strictly decreasing on $(1,+\infty)$.
	
	Now, assume by contradiction that \eqref{eq:phi12} is not satisfied. By the monotonicity of $\varphi_1$ and $\varphi_2$, this implies the existence of $\overline{t}>1$ such that
	\begin{gather}
	 \label{eq-phieq}\varphi_1(\overline{t})=\varphi_2(\overline{t}).\\
	 \label{eq-phiineq} \varphi_1'(\overline{t})\geq\varphi_2'(\overline{t}).
	\end{gather}
    Using \eqref{eq:phi12} and \eqref{eq-phieq}, $I(\overline{t})=\varphi_2(\overline{t})(\overline{t}^2-1)^{\frac{2}{p-2}}$, so that \eqref{eq-phiineq} reads
	\[
	\begin{split}
		 \frac{1}{\overline{t}^2-1}\left(1-\frac{4\overline{t}^2}{(6-p)\left(\overline{t}^2+\frac{p-2}{2q-p-2}\right)}\right)\geq\frac{p-2}{6-p}\frac{\frac{p-2}{2q-p-2}-\overline{t}^2}{\left(\overline{t}^2+\frac{p-2}{2q-p-2}\right)^2}.
	\end{split}
	\]
	Thus, expanding computations and multiplying both sides times $-\frac{(6-p)[(2q-p-2)\overline{t}^2+p-2]}{p-2}$, which is negative for the chosen range of $p,\,q$, we have
	\[
	 \frac{(2q-p-2)\overline{t}^2-(6-p)}{\overline{t}^2-1}\leq(2q-p-2)-\frac{2(2q-p-2)(p-2)}{(2q-p-2)\overline{t}^2+(p-2)}.
	\]
	Hence, multiplying times $\overline{t}^2-1$ and suitably rearranging terms, we obtain
	\[
	 \frac{(2q-p-2)(p-2)(\overline{t}^2-1)}{(2q-p-2)\overline{t}^2+(p-2)}\leq 4-q,
	\]
	which is a contradiction as the left hand side positive and the right hand side is nonpositive for the chosen range of $p,\,q$.
\end{proof}

\begin{remark}
	\label{rem:nomon_mu}
	Note that Lemma \ref{lem:mon_mu} does not cover the cases $ p\in(2,6), \frac p2+1 < q < 4$, and $ p>6, 4<q<\frac p2+1$. This is not due to a flaw in the result, but it is rather seated in the structure of the problem. Indeed, in these regimes the map $t\mapsto\mu(t)$ defined in \eqref{eq:mut} is not monotone in general. In the latter case, this is an immediate consequence of Lemma \ref{lem:mu_as}, which entails $\displaystyle \lim_{t\to1^+}\mu(t)=\lim_{t\to+\infty}\mu(t)=+\infty$. In the former case, the study of the monotonicity for general $p,q$ can be rather hard. However, taking e.g. $p=4$ yields $I(t)=t-1$, so that the function $h(t)$ defined by \eqref{eq:h} reads $h(t)=\frac{(q-3)t-1}{(q-3)(t+1)}$, that changes sign on $(1,+\infty)$ for every $q\in(3,4)$.
\end{remark}

Now, we have all the elements to prove Theorems \ref{thm:exsol}--\ref{thm:nonun}.

\begin{proof}[Proof of Theorem \ref{thm:exsol}]
	If $2<p<6$ and $ 2<q<\frac p2+1$, then, by Theorem \ref{thm:nomass}(ii) and Theorem \ref{thm:nomass}(ii)(iii)(c), \eqref{nls-nomass} admits a nontrivial solution in $H^1(\R)$ if and only if $\lambda\in[0,\overline{\lambda}_{p,q}]$. Moreover, by Lemma \ref{lem:mu_as} the mass of such solutions satisfies
	\[
	\lim_{t\to1^+}\mu(t)=0\qquad\text{and}\qquad\lim_{t\to+\infty}\mu(t)=\mu_0\,,
	\]
	where $\mu_0$ is the mass of the solution of \eqref{nls-nomass} with $\lambda=0$ defined by \eqref{eq-mu0}. Since $\mu(t)$ is continuous and, by Lemma \ref{lem:mon_mu}, strictly increasing on $(1,+\infty)$, this shows that \eqref{nlse} admits a nontrivial solution if and only if $\mu\in(0,\mu_{p,q}]$, with $\mu_{p,q}=\mu_0$, and that such solution is unique. Conversely, if $2<p<6$ and $q>4$, then, by Theorem \ref{thm:nomass}(iii)(a), \eqref{nls-nomass} admits a nontrivial solution in $H^1(\R)$ for every $\lambda\geq0$. However, since Lemmas \ref{lem:mu_as} and \ref{lem:mon_mu} provide the same asymptotics and monotonicity for $\mu(t)$, respectively, one obtains the same result, which thus completes the proof of Theorem \ref{thm:exsol}(i) together with the associated uniqueness claim.
	
	If $p\geq6$, then, by Theorem \ref{thm:nomass}(ii) and Theorem \ref{thm:nomass}(iii), we have that \eqref{nls-nomass} admits a nontrivial solution in $H^1(\R)$ only for $\lambda\in(0,\overline{\lambda}_{p,q}]$, when $q\in(2,4)$, and only for $\lambda>0$, when $ q>\frac p2+1$. Since in these cases Lemma \ref{lem:mu_as} yields
	\[
	\lim_{t\to1^+}\mu(t)=0\qquad\text{and}\qquad\lim_{t\to+\infty}\mu(t)=+\infty,
	\]
	again by the continuity of $\mu(t)$ one obtains Theorem \ref{thm:exsol}(ii). In addition, the associated uniqueness claim follows again by the monotonicity established by Lemma \ref{lem:mon_mu}.
	
	Let, now, $p\in(2,6)$ and $ \frac p2+1 < q < 4$. By Theorem \ref{thm:nomass}, \eqref{nls-nomass} admits a nontrivial solution in $H^1(\R)$ for every $\lambda\geq0$. Also, Lemma \ref{lem:mu_as} shows that
	\[
	\lim_{t\to1^+}\mu(t)=+\infty\qquad\text{and}\qquad\lim_{t\to+\infty}\mu(t)=\mu_0.
	\]
	By continuity, this shows that $\mu(t)$ is uniformly bounded away from zero on $(1,+\infty)$. Hence, \eqref{nlse} admits a solution if and if $\mu\geq\mu_{p,q}$, for some $\mu_{p,q}>0$. On the other hand, when $p>6$ and $ 4< q <\frac p2+1$, by Theorem \ref{thm:nomass}(ii) and Theorem \ref{thm:nomass}(iii)(a), \eqref{nls-nomass} admits nontrivial solution only for $\lambda\in[0,\overline{\lambda}_{p,q}]$. However, since the behavior of $\mu(t)$ is completely analogous, with the only difference that $\displaystyle \lim_{t\to+\infty}\mu(t)=+\infty$, Theorem \ref{thm:exsol}(iii) follows.
	
	When $p\in(2,6)$ and $q=4$, \eqref{nls-nomass} admits a nontrivial solution in $H^1(\R)$ for every $\lambda\geq0$ again by Theorem \ref{thm:nomass}(ii) and Theorem \ref{thm:nomass}(iii)(a). Moreover, Lemma \ref{lem:mu_as} gives
	\[
	\lim_{t\to1^+}\mu(t)=2\qquad\text{and}\qquad\lim_{t\to+\infty}\mu(t)=\mu_0\,,
	\]
	and Lemma \ref{lem:mon_mu} guarantees that $\mu(t)$ is strictly increasing on $(1,+\infty)$. Hence, \eqref{nlse} admits a solution for every $\mu\in(2,\mu_0]$, which proves Theorem \ref{thm:exsol}(iv), and such solution is unique. Theorem \ref{thm:exsol}(v) and the associated uniqueness claim can be proved in the very same way, simply replacing $\mu_0$ with $+\infty$.

	Finally, since the content of Theorem \ref{thm:exsol}(vi) is proved by Lemma \ref{lem:mu_diag} and Theorem \ref{thm:nomass}(iii), we conclude.
\end{proof}

\begin{proof}[Proof of Theorem \ref{thm:nonun}]
	It is an immediate consequence of Remark \ref{rem:nomon_mu}.
\end{proof}	

\begin{remark}
	\label{rem:cont_sol}
	Since we need it later, let us focus on the map that associates each $t\in (1,+\infty)$ with the unique positive solution $u_t\in H^1(\R)$ of \eqref{nls-nomass} with $\lambda>0$ obtained through \eqref{eq:exp_ul>0}--\eqref{eq:condt}, and $+\infty$ with the solution $u_{\infty}\in H^1(\R)$ of \eqref{nls-nomass} with $\lambda=0$ given by \eqref{eq:expul=0} (note that this last abuse of notation is consistent since, setting $t=+\infty$ in \eqref{eq:condt}, one obtains $\lambda=0$). One can check that this map is continuous from any interval $I\subseteq(1,+\infty]$ on which it is defined to $H^1(\R)$. Indeed, by \eqref{eq:exp_ul>0}--\eqref{eq:condt} and \eqref{eq:expul=0}, one can see that $u_t\to u_{\overline{t}}$ pointwise on $\R$ as $t\to\overline{t}\in I$. Moreover, since $(u_t)_{t\in I}$ is equibounded in $H^1(\R)$ and every $u_t$ is radially decreasing on $\R$ (by Lemma \ref{lem:statQual}), we have $u_t\to u_{\overline{t}}$ in $L^r(\R)$ for every $r\in(2,+\infty]$, whereas the convergence of $u_t$ to $u_{\overline{t}}$ in $L^2(\R)$ is a direct consequence of the continuity of the map $\mu(t)$ defined by \eqref{eq:mut}. Given that \eqref{eq:condt} also ensures that each value of $t$ is associated with a single $\lambda(t)$ and that the map $t\mapsto \lambda(t)$ is continuous, passing to the limit as $t\to\overline{t}$ in \eqref{nls-nomass} and exploiting the convergences proved so far one can prove that $u_t\to u_{\overline{t}}$ in $H^1(\R)$.
\end{remark}

\section{Proof of Theorem \ref{thm:propE}}
\label{sec:E}

In this section we establish some preliminary features of the ground state energy level $\EE_{p,q}$, defined by \eqref{Elevel}, which play an important role in the rest of the paper. Precisely, we prove Theorem \ref{thm:propE}.

Since we frequently use them throughout, we recall here the following Gagliardo--Nirenberg inequalities:
\begin{equation}
	\label{eq:GNp}
	\|u\|_p^p\leq K_p\|u\|_2^{\frac{p}{2}+1}\|u'\|_2^{\frac p2-1},\qquad\forall u\in H^1(\R),\qquad p>2,
\end{equation}
for suitable constants $K_p>0$, and
\begin{equation}
	\label{eq:Gninf}
	\|u\|_\infty^2\leq\|u\|_2\|u'\|_2,\qquad\forall u\in H^1(\R).
\end{equation}
Furthermore, observe that for every $p,q>2$ and $\mu_1,\mu_2>0$, if one takes $u\in H_{\mu_1}^1(\R)$ and sets
\[
v(x):=\left(\tfrac{\mu_2}{\mu_1}\right)^\alpha u\left(x\left(\tfrac{\mu_2}{\mu_1}\right)^{2\alpha-1}\right),\qquad\forall x\in\R,
\]
for any given $\alpha>0$, then one obtains that $v\in H_{\mu_2}^1(\R)$ and
\begin{equation}
\label{eq:scalEst}
\EE_{p,q}(\mu_2)\leq E_{p,q}(v)=\frac12\left(\frac{\mu_2}{\mu_1}\right)^{4\alpha-1}\|u'\|_{2}^2+\frac1p\left(\frac{\mu_2}{\mu_1}\right)^{\alpha(p-2)+1}\|u\|_{p}^p-\frac1q\left(\frac{\mu_2}{\mu_1}\right)^{\alpha q}|u(0)|^q.
\end{equation}

\begin{remark}
	\label{rem:E<0_dec}
	Observe that, for every $p,q>2$, the map $\mu\mapsto\EE_{p,q}(\mu)$ is non--positive and non--increasing on $(0,+\infty)$. To see that $\EE_{p,q}(\mu)\leq0$ for every $p,q>2$ and $\mu>0$ one can argue as follows.  Let $(u_n)_n\subset H_\mu^1(\R)$ be such that $u_n\equiv c_n$ on $[-n^2,n^2]$, $u_n\equiv0$ on $\R\setminus[-n^2-1,n^2+1]$, and $u_n$ is linear both on $[n^2,n^2+1]$ and on $[-n^2-1,-n^2]$. Here, $c_n>0$ is chosen to guarantee that $\|u_n\|_{2}^2=\mu$. Since a direct computation easily shows that $c_n\sim n^{-1}\sqrt{\frac{\mu}{2}}$, as $n\to+\infty$, it follows that $\displaystyle\EE_{p,q}(\mu)\leq\lim_{n\to+\infty}E_{p,q}(u_n)=0$. On the other hand, to see the monotonicity of $\EE_{p,q}$, let $0<\mu_1<\mu_2$ and $(v_n)_n\subset H_{\mu_1}^1(\R)$ be such that $E_{p,q}(v_n)\to\EE_{p,q}(\mu_1)$ as $n\to+\infty$. Without loss of generality, assume also that $(v_n)_n\subset C_0^\infty(\R)$. Set, now, $w_n(x):=v_n(x)+u_n(x-y_n)$, where $(u_n)_n\subset H_{\mu_2-\mu_1}^1(\R)$ is defined as in the first part of the remark, but with mass $\mu_2-\mu_1$, and $y_n\in\R$ is such that the supports of $v_n$ and of $u_n(\cdot-y_n)$ are disjoint for every $n$. Then, $(w_n)_n\in H_{\mu_2}^1(\R)$ and
	\begin{align*}
	\EE_{p,q}(\mu_2)&\leq\lim_{n\to+\infty}E_{p,q}(w_n)=\lim_{n\to+\infty}E_{p,q}(v_n)+\lim_{n\to+\infty}E_{p,q}(u_n)\\
	&=\EE_{p,q}(\mu_1)+\EE_{p,q}(\mu_2-\mu_1)\leq\EE_{p,q}(\mu_1),
	\end{align*}
	where the last inequality is due to the non--positivity of $\EE_{p,q}(\cdot)$.
\end{remark}

The next two lemmas discuss the lower boundedness of $E_{p,q}$ in $H_\mu^1(\R)$ depending on the different values of $p,q$.

\begin{lemma}
	\label{lem:E=-inf}
	If $p>2$ and $q>\max\left\{4,\frac p2+1\right\}$, then $\EE_{p,q}(\mu)=-\infty$ for every $\mu>0$. Moreover, if $p\in(2,6)$ and $q=4$, then
	\[
	\EE_{p,4}(\mu)=\begin{cases}
	0 & \text{if }\mu\leq 2\\
	-\infty & \text{if }\mu>2\,. 
	\end{cases}
	\]
\end{lemma}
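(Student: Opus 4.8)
The plan is to read off the behaviour of the energy \eqref{E} from the competition between the positive kinetic and $L^p$ terms and the negative point term $-\frac1q|v(0)|^q$, and to locate the threshold $\mu=2$ in the $q=4$ case through the endpoint Gagliardo--Nirenberg inequality \eqref{eq:Gninf}. Throughout I would rely on Remark \ref{rem:E<0_dec}, which already gives $\EE_{p,q}\le0$: thus for the lower threshold it only remains to bound the energy from below, while for $\EE_{p,q}=-\infty$ it suffices to exhibit mass--preserving families along which $E_{p,q}\to-\infty$.

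For the statement $\EE_{p,q}(\mu)=-\infty$ I would split according to $p\le6$ and $p>6$. When $p\le6$ (so that $\max\{4,\frac p2+1\}=4$ and the hypothesis reads $q>4$), I take any fixed $u\in H_\mu^1(\R)$ with $u(0)\ne0$ and use the mass--preserving scaling $u_\sigma(x)=\sigma^{1/2}u(\sigma x)$; a direct computation gives $E_{p,q}(u_\sigma)=\frac12\sigma^2\|u'\|_2^2+\frac1p\sigma^{\frac p2-1}\|u\|_p^p-\frac1q\sigma^{\frac q2}|u(0)|^q$, and since $\frac p2-1\le2<\frac q2$, the point term dominates and $E_{p,q}(u_\sigma)\to-\infty$ as $\sigma\to+\infty$. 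When $p>6$ the exponent $\frac p2-1$ of the $L^p$ term exceeds $2$, and the pure scaling is defeated for $\frac p2+1<q\le p-2$; here I would instead use a two--scale competitor, concentrating only a fraction $m$ of the mass in a triangular spike of height $h$ and width $\delta\sim m/h^2$ centred at the origin, and storing the remaining mass $\mu-m$ in a wide, low plateau supported far from the origin, whose kinetic and $L^p$ contributions vanish as its width diverges. For such a competitor $u(0)=h$ and the spike contributes $\tfrac{2}{3m}h^4+\tfrac{3m}{p(p+1)}h^{p-2}-\tfrac1q h^q$ up to constants; optimising the positive part in $m$ yields $m\sim h^{\frac{6-p}2}\to0$ (so that the split $m<\mu$ is feasible for large $h$) and a balanced cost of order $h^{\frac p2+1}$, which is beaten by $-\frac1q h^q$ exactly when $q>\frac p2+1$.

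For the case $p\in(2,6)$, $q=4$ the threshold is governed by \eqref{eq:Gninf}. If $\mu\le2$, discarding the nonnegative $L^p$ term and using $|u(0)|^4\le\|u\|_\infty^4\le\|u\|_2^2\|u'\|_2^2=\mu\|u'\|_2^2$ gives $E_{p,4}(u)\ge\frac12\big(1-\tfrac\mu2\big)\|u'\|_2^2\ge0$ for every $u\in H_\mu^1(\R)$, whence $\EE_{p,4}(\mu)=0$ by Remark \ref{rem:E<0_dec}. If $\mu>2$, I would test with a dilation of $u_0(x)=A\,e^{-b|x|}$, which saturates \eqref{eq:Gninf} and attains its maximum at the origin, so that $|u_0(0)|^4=\mu\|u_0'\|_2^2$; applying the scaling $u_\sigma$ above gives $E_{p,4}(u_{0,\sigma})=\sigma^2\big(\tfrac12-\tfrac\mu4\big)\|u_0'\|_2^2+\frac1p\sigma^{\frac p2-1}\|u_0\|_p^p$, and since $\frac12-\frac\mu4<0$ and $\frac p2-1<2$ the leading $\sigma^2$ term forces $E_{p,4}(u_{0,\sigma})\to-\infty$.

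The main obstacle is the window $p>6$, $\frac p2+1<q\le p-2$, where the naive $L^2$--scaling is overwhelmed by the $L^p$ term and one is forced into the two--scale spike--plus--plateau construction with the precise mass split $m\sim h^{(6-p)/2}$; the crux is to verify that the plateau can be chosen with arbitrarily small energy while the spike realises the balanced cost of order $h^{\frac p2+1}$. The remaining ingredients---the pure scaling for $p\le6$ and the endpoint Gagliardo--Nirenberg bound pinning the value $\mu=2$ when $q=4$---are comparatively routine.
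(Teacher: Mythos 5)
Your proposal is correct. For the case $q>\max\left\{4,\frac p2+1\right\}$ it takes a genuinely different route from the paper: the paper tests the energy on a single family of scaled exponentials $v=\mu^{\frac1{4-q}}u\big(\mu^{\frac{q-2}{4-q}}\,\cdot\big)$ with $u=\delta e^{-\delta^2|x|}$, observes that $q>4$ and $q>\frac p2+1$ force $E_{p,q}(v)\to-\infty$ as $\mu\to0^+$, and then propagates $\EE_{p,q}=-\infty$ to all masses via the monotonicity of Remark \ref{rem:E<0_dec}; you instead work at fixed mass, using pure $L^2$--scaling when $p\leq6$ and, in the window $p>6$, $\frac p2+1<q\leq p-2$ where scaling is beaten by the $L^p$ term, the spike--plus--plateau competitor with the optimized split $m\sim h^{\frac{6-p}{2}}$ and balanced positive cost $\sim h^{\frac p2+1}$, which $-\frac1q h^q$ defeats precisely when $q>\frac p2+1$ (your spike energetics check out: the triangle of height $h$ and mass $m$ costs $\frac{2h^4}{3m}+\frac{3mh^{p-2}}{p(p+1)}$). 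The two arguments are equivalent in substance --- the paper's monotonicity remark is itself proved by parking mass in a far--away plateau, so both amount to a vanishing--mass spike at the origin plus mass storage at infinity --- but the paper's packaging avoids your case distinction and the optimization in $m$, while yours makes the mechanism, and the sharp role of $\frac p2+1$ as the spike--balance exponent, completely explicit. For $p\in(2,6)$, $q=4$ your proof coincides with the paper's: the same endpoint bound from \eqref{eq:Gninf} for $\mu\leq2$ and the same mass--preserving dilation $\sqrt\sigma\,u(\sigma\,\cdot)$ for $\mu>2$; the only difference is that the paper invokes \cite[Theorem 1.2]{BD21} for the existence of $u$ with $\frac12\|u'\|_2^2-\frac14|u(0)|^4<0$ when $\mu>2$, whereas you exhibit it explicitly as the exponential saturating \eqref{eq:Gninf}, which makes this step self--contained.
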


\begin{proof}
	Fix $p>2$, $ q>\max\left\{4,\frac p2+1\right\}$ and $\mu>0$. Setting $u(x):=\delta e^{-\delta^2|x|}$, for every $x\in \R$ and for a suitable $\delta>0$ to be chosen, we have $u\in H_1^1(\R)$. Then, letting
	\[
	v(x):=\mu^{\frac1{4-q}}u\left(\mu^{\frac{q-2}{4-q}}x\right),
	\]
	we have that $v\in H_\mu^1(\R)$ and
	\begin{equation}
	\label{eq-enumudelta}
	E_{p,q}(v)=-\mu^{\frac q{4-q}}\left(\frac{\delta^q}{q}-\frac{\delta^4}{2}\right)+\frac{2\delta^{p-2}}{p^2}\mu^{\frac{p+2-q}{4-q}}\,.
	\end{equation}
	Since $q>4$, taking $\delta$ sufficiently large yields $\displaystyle E_{p,q}(v)\leq C\left(-\mu^{\frac{q}{4-q}}+\mu^{\frac{p+2-q}{4-q}}\right)$, for a suitable constant $C>0$, and thus, by $ q>\frac p2+1$ we obtain
	\[
	\EE_{p,q}(\mu)\leq E_{p,q}(v)\to-\infty\qquad\text{as}\quad\mu\to0^+.
	\]
	This, together with Remark \ref{rem:E<0_dec}, proves that $\EE_{p,q}(\mu)=-\infty$ for every $\mu>0$.
	
	Consider, now, the case $p\in(2,6)$, $q=4$. Recall that, by \cite[Theorem 1.2]{BD21}, for every $\mu\leq 2$ and $u\in H_\mu^1(\R)$ we have
	\[
	E_{p,4}(u)>\frac12\|u'\|_2^2-\frac14|u(0)|^4\geq0.
	\]
	Hence, combining with Remark \ref{rem:E<0_dec} yields $\EE_{p,q}(\mu)=0$ for every $\mu\leq 2$. Conversely, if $\mu>2$, then, again by \cite[Theorem 1.2]{BD21}, there exists $u\in H_\mu^1(\R)$ such that
	\[
	\frac12\|u'\|_2^2-\frac14|u(0)|^4<0.
	\]
	Thus, letting $v_\alpha(x):=\sqrt{\alpha}u(\alpha\,x)$, we have $v_\alpha\in H_\mu^1(\R)$ for every $\alpha>0$ and
	\[
	\EE_{p,4}(\mu)\leq E_{p,4}(v_\alpha)=\alpha^2\left(\frac12\|u'\|_2^2-\frac14|u(0)|^4\right)+\frac{\alpha^{\frac p2-1}}p\|u\|_p^p\to-\infty\qquad\text{as}\quad\alpha\to+\infty,
	\]
	since $\displaystyle \frac p2-1<2$ whenever $p\in(2,6)$.
\end{proof}

\begin{remark}
	\label{rem:noGS}
	Incidentally, observe that the previous proof also shows that $\EE_{p,4}(\mu)$ cannot be attained when $\mu\leq2$, even though it is finite here.
\end{remark}

\begin{lemma}
	\label{lem:Ebound}
	If $p>2$ and $ q<\max\left\{4,\frac p2+1\right\}$, then $\EE_{p,q}(\mu)>-\infty$ for every $\mu>0$.
\end{lemma}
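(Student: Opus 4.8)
The plan is to bound from below the only negative contribution to $E_{p,q}$, namely $-\tfrac1q|v(0)|^q$, in terms of the two nonnegative terms $\tfrac12\|v'\|_2^2$ and $\tfrac1p\|v\|_p^p$. The hypothesis $q<\max\{4,\tfrac p2+1\}$ is equivalent to requiring that either $q<4$ or $q<\tfrac p2+1$; moreover, if $q\ge4$ then necessarily $p>6$ and $q<\tfrac p2+1$. I would treat these two regimes by separate estimates, since neither one alone covers the whole range: when $\tfrac p2+1\le q<4$ (possible only for $p\le6$) the trace estimate below fails, while when $q\ge4$ the kinetic term alone does not control $|v(0)|^q$.

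In the regime $q<4$ it suffices to discard the $L^p$ term. Using the Gagliardo--Nirenberg inequality \eqref{eq:Gninf} together with $\|v\|_2^2=\mu$, for every $v\in H_\mu^1(\R)$ one has $|v(0)|^q\le\|v\|_\infty^q\le\mu^{q/4}\|v'\|_2^{q/2}$, whence
\[
E_{p,q}(v)\ge\tfrac12\|v'\|_2^2-\tfrac1q\mu^{q/4}\|v'\|_2^{q/2}.
\]
Setting $y:=\|v'\|_2$, the right--hand side is a continuous function of $y\ge0$ of the form $\tfrac12 y^2-cy^{q/2}$ with $q/2<2$, hence bounded below on $[0,+\infty)$ by a constant depending only on $p,q,\mu$. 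This uniform lower bound yields $\EE_{p,q}(\mu)>-\infty$.

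For the regime $q\ge4$ (so that $p>6$ and $q<\tfrac p2+1$) I would bring in the defocusing $L^p$ term through a pointwise trace inequality. Writing $|v(0)|^{\frac p2+1}=-\int_0^{+\infty}(|v|^{\frac p2+1})'\,dx$ and applying Cauchy--Schwarz gives
\[
|v(0)|^{\frac p2+1}\le\Big(\tfrac p2+1\Big)\int_0^{+\infty}|v|^{\frac p2}|v'|\,dx\le\Big(\tfrac p2+1\Big)\|v\|_p^{\frac p2}\|v'\|_2,
\]
and therefore $|v(0)|^q\le C\,\|v\|_p^{\frac{pq}{p+2}}\|v'\|_2^{\frac{2q}{p+2}}$ for a constant $C=C_{p,q}>0$. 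Denoting $a:=\|v\|_p^p$ and $b:=\|v'\|_2^2$, this reads $|v(0)|^q\le C\,a^{\theta}b^{\theta}$ with $\theta:=\tfrac{q}{p+2}$, and the exponents sum to $2\theta=\tfrac{2q}{p+2}<1$ precisely because $q<\tfrac p2+1$. By the weighted Young inequality one can then absorb $\tfrac1q C\,a^\theta b^\theta$ into $\varepsilon a+\varepsilon b$ up to an additive constant $C_\varepsilon$; choosing $\varepsilon\le\min\{\tfrac1p,\tfrac12\}$ yields $E_{p,q}(v)\ge\tfrac1p a+\tfrac12 b-\varepsilon a-\varepsilon b-C_\varepsilon\ge-C_\varepsilon$, again uniformly on $H_\mu^1(\R)$.

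The main obstacle is this second regime: one must produce an interpolation estimate for $|v(0)|^q$ that involves the $L^p$ norm (rather than the $L^2$ norm, as in \eqref{eq:Gninf}), and then check that the sublinearity threshold ensuring the Young absorption, $\tfrac{2q}{p+2}<1$, coincides exactly with the borderline $q=\tfrac p2+1$ that recurs throughout the paper. Everything else reduces to elementary one--variable estimates, and the two cases together exhaust the hypothesis $q<\max\{4,\tfrac p2+1\}$.
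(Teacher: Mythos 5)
Your proof is correct, and while your treatment of the regime $q<4$ coincides with the paper's (both rest on the bound $|v(0)|^q\le\mu^{q/4}\|v'\|_2^{q/2}$ from \eqref{eq:Gninf} and the sublinearity $q/2<2$), your argument for the regime $p>6$, $4\le q<\frac p2+1$ takes a genuinely different and arguably cleaner route. The paper argues by contradiction: assuming $E_{p,q}(u_n)\to-\infty$ along a sequence in $H^1_\mu(\R)$, it localizes on the interval $(-1,1)$, controls $|u_n(0)|^q$ by $\|u_n\|_{L^q(-1,1)}^q$ plus absorbable fractions of $\|u_n'\|_2^2$ and $\|u_n\|_p^p$ via interpolation of $L^{2(q-1)}$ between $L^q$ and $L^p$ (the condition $q<2(q-1)<p$ being exactly $q>2$ and $q<\frac p2+1$), and then derives the contradiction $\|u_n\|_p\to+\infty$ together with $\|u_n\|_p^{p-q}\le C$. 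You instead prove a direct, quantitative lower bound: the global trace inequality $|v(0)|^{\frac p2+1}\le\bigl(\tfrac p2+1\bigr)\|v\|_p^{p/2}\|v'\|_2$ gives $|v(0)|^q\le C\,a^\theta b^\theta$ with $a=\|v\|_p^p$, $b=\|v'\|_2^2$, $\theta=\frac{q}{p+2}$, and the weighted Young absorption works precisely because $2\theta<1\iff q<\frac p2+1$ — the same borderline, surfacing in a more transparent way. Your approach buys two things: first, no contradiction or compactness-style bookkeeping is needed; second, in this regime your lower bound is uniform over all of $H^1(\R)$, independent of $\mu$, which immediately re-proves the assertion $\lim_{\mu\to+\infty}\EE_{p,q}(\mu)>-\infty$ for $p>6$, $4\le q<\frac p2+1$ that the paper obtains separately in the proof of Theorem \ref{thm:gs2} by adapting Lemma \ref{lem:limE} (an argument requiring existence of ground states and the stationary-state estimate \eqref{eq-u0lim}). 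The paper's local argument, on the other hand, is the one its authors reuse almost verbatim to prove boundedness of minimizing sequences in Theorem \ref{thm:propE} and Lemma \ref{lem:critex}, a role your uniform bound could serve equally well.
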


\begin{proof}
When $q<4$, finiteness of $\EE_{p,q}(\mu)$ every $\mu>0$ is a direct consequence of \eqref{eq:Gninf}. It is then left to discuss the case $p>6$ and $ 4\leq q <\frac p2+1$. Assume by contradiction that there exist $\mu>0$ and $(u_n)_n\in H^1_\mu(\R)$ such that $E_{p,q}(u_n)\to-\infty$. In particular, for every $n$,
\begin{equation}
\label{eq:assurdo}
\frac{1}{2}\|u_n^\prime\|_2^2+\frac{1}{p}\|u_n\|_p^p< \frac{1}{q}|u_n(0)|^q\,.
\end{equation}
Now, since, (again) for every $n$,
\[
\begin{split}
\left|2|u_n(0)|^q-\int_{-1}^{1}|u_n(x)|^qdx\right|&\,=\left|\int_{-1}^1 (|u_n(0)|^q-|u_n(x)|^q)dx\right| \\
&\,\leq 2q\int_{-1}^1 |u_n'(t)||u_n(t)|^{q-1}dt\le 2q\|u_n'\|_{L^2(-1,1)}\|u_n\|_{L^{2(q-1)}(-1,1)}^{q-1}\,,
\end{split}
\]
combining with Young inequality we obtain
\begin{align}
\label{proof.en.bound.1}
\frac{1}{q}|u_n(0)|^q&\,\le \frac{1}{2q}\|u_n\|_{L^q(-1,1)}^q+\|u_n'\|_{L^2(-1,1)}\|u_n\|_{L^{2(q-1)}(-1,1)}^{q-1}\nonumber\\
&\,\leq\frac{1}{2q}\|u_n\|_{L^q(-1,1)}^q+\frac{1}{6}\|u_n'\|_2^2 + \frac32\|u_n\|_{L^{2(q-1)}(-1,1)}^{2(q-1)}.
\end{align}
Since $4\leq q<\frac{p}{2}+1$, then $q<2(q-1)<p$ and by interpolation
\[
\frac32\|u_n\|_{L^{2(q-1)}(-1,1)}^{2(q-1)}\le C_1\|u_n\|_{L^q(-1,1)}^{2\theta(q-1)}\|u_n\|_{L^p(-1,1)}^{2(1-\theta)(q-1)}, 
\]
for some $C_1>0$, where 
\begin{equation}
\label{eq:theta}
\frac{1}{2(q-1)}=\frac{\theta}{q}+\frac{1-\theta}{p}\qquad\text{and}\qquad\theta\in(0,1).
\end{equation}
Since $2(1-\theta)(q-1)<p$, again by Young inequality,
\[ C_1\|u_n\|_{L^q(-1,1)}^{2\theta(q-1)}\|u_n\|_{L^p(-1,1)}^{2(1-\theta)(q-1)}\le C_2\|u_n\|_{L^q(-1,1)}^{2\theta\alpha(q-1)} + \frac{1}{3p}\|u_n\|_p^p, 
\]
with $C_2>0$ and
\[
\alpha=\frac{p}{p-2(q-1)(1-\theta)}\,,
\]
so that, by \eqref{eq:theta},
\[
\alpha(q-1)=\frac{p}{\frac{p}{q-1}-2(1-\theta)}=\frac{1}{\frac{1}{q-1}-2\frac{1-\theta}{p}}=\frac{q}{2\theta}
\]
whence $2\theta\alpha(q-1)=q$. Thus, plugging into \eqref{proof.en.bound.1}, we get
\begin{equation}\label{proof.en.bound.2}
\frac{1}{q}|u_n(0)|^q\le C_3\|u_n\|_{L^q(-1,1)}^q + \frac{1}{3p}\|u_n\|_p^p + \frac{1}{6}\|u_n'\|_2^2\,,
\end{equation}
with $C_3>0$, and combining with \eqref{eq:assurdo} yields
\begin{equation}
\label{lastineq}
|u_n(0)|^q\le C_4\|u_n\|_{L^q(-1,1)}^q\le C_5 \|u_n\|_p^q,\qquad\text{with}\quad C_4,\,C_5>0,
\end{equation}
where we also used H\"older inequality combined with the condition $q<p$. Now, on the one hand, since $E_{p,q}(u_n)\to -\infty$, then $|u_n(0)|^q\to +\infty$, so that $\|u_n\|_p\to +\infty$. On the other hand, since \eqref{eq:assurdo} implies $\|u_n\|_p^p\leq \frac{p|u_n(0)|^q}{q}$, relying again on \eqref{lastineq}, we obtain $\|u_n\|_p^p\le C_6 \|u_n\|_p^q$ for a suitable $C_6>0$. Hence, $\|u_n\|_p$ is uniformly bounded, which contradicts the previous claim.
\end{proof}

We can now complete the proof of Theorem \ref{thm:propE}.

\begin{proof}[Proof of Theorem \ref{thm:propE}]
	By Remark \ref{rem:E<0_dec} and Lemmas \ref{lem:E=-inf}--\ref{lem:Ebound}, it is left to prove the continuity of the map $\mu\mapsto\EE_{p,q}(\mu)$ on $(0,+\infty)$ when $p>2$ and $ q<\max\left\{4,\frac p2+1\right\}$. Let, then, $\mu_n\to\overline\mu$ as $n\to+\infty$, for some $\overline\mu>0$, and, for fixed $\varepsilon>0$, let $(u_n)_n\subset H_{\mu_n}^1(\R)$ be such that, for every $n$,
	\[
	\EE_{p,q}(\mu_n)\leq E_{p,q}(u_n)\leq\EE_{p,q}(\mu_n)+\varepsilon\,.
	\]
	Using \eqref{eq:GNp}--\eqref{eq:Gninf} when $q\in(2,4)$ and arguing as in the proof of Lemma \ref{lem:Ebound} when $\displaystyle 4\leq q <\frac p2+1$, one can check that $\EE_{p,q}(\mu_n)$ is uniformly bounded for large $n$ and that $(u_n)_n$ is bounded in $H^1(\R)$. Therefore, applying \eqref{eq:scalEst} with $u=u_n$, $\mu_1=\mu_n$, $\mu_2=\overline\mu$, and taking $n\to+\infty$ we obtain
	\[
	\EE_{p,q}(\overline\mu)\leq\liminf_{n\to+\infty}E_{p,q}(u_n)\leq\liminf_{n\to+\infty}\EE_{p,q}(\mu_n)+\varepsilon.
	\]
	Since $\varepsilon>0$ is arbitrary, this yields
	\begin{equation}
		\label{eq:cont1}
		\EE_{p,q}(\overline\mu)\leq\liminf_{n\to+\infty}\EE_{p,q}(\mu_n)\,.
	\end{equation}
	Similarly, relying again on \eqref{eq:scalEst} with  $\mu_1=\overline\mu$, $\mu_2=\mu_n$, for every $u\in H_{\overline\mu}^1(\R)$ we have
	\[
	\limsup_{n\to+\infty}\EE_{p,q}(\mu_n)\leq E_{p,q}(u)\,.
	\]
	Taking the infimum over $u\in H_{\overline\mu}^1(\R)$ and coupling with \eqref{eq:cont1}, we conclude.
\end{proof}

We end this section with a couple of results that are useful to address the existence of ground states discussed in Section \ref{sec:gs}. The former establishes a sufficient condition for the strict monotonicity of $\EE_{p,q}$, the latter provides a general existence criterion for ground states.

\begin{lemma}
	\label{lem:Estretdec}
	Let $\mu>0$ and $u\in H_\mu^1(\R)$ be such that $E_{p,q}(u)=\EE_{p,q}(\mu)$. If $u$ solves \eqref{nlse} for a suitable $\lambda>0$, then $\EE_{p,q}(\mu)>\EE_{p,q}(\mu_1)$, for every $\mu_1>\mu$.
\end{lemma}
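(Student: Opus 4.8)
The plan is to exploit the mass-rescaling family from \eqref{eq:scalEst}. Given any $\mu_1>\mu$, set $\sigma:=\mu_1/\mu>1$ and, for an arbitrary fixed $\alpha>0$, consider the curve
$v_\sigma(x):=\sigma^\alpha u\!\left(x\,\sigma^{2\alpha-1}\right)$, which satisfies $v_\sigma\in H^1_{\mu_1}(\R)$ and $v_1=u$. Writing $A:=\|u'\|_2^2$, $B:=\|u\|_p^p$ and $C:=|u(0)|^q$, formula \eqref{eq:scalEst} reads
$E_{p,q}(v_\sigma)=\tfrac12\sigma^{4\alpha-1}A+\tfrac1p\sigma^{\alpha(p-2)+1}B-\tfrac1q\sigma^{\alpha q}C$, and I would study the sign of its derivative at $\sigma=1$, with the aim of proving it is strictly negative. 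Once that is done, $E_{p,q}(v_\sigma)<E_{p,q}(u)=\EE_{p,q}(\mu)$ for $\sigma>1$ close to $1$, so that $\EE_{p,q}(\mu_1)\le E_{p,q}(v_\sigma)<\EE_{p,q}(\mu)$ for every $\mu_1$ slightly larger than $\mu$; the statement for all $\mu_1>\mu$ then follows by coupling this local strict decrease with the monotonicity of $\EE_{p,q}$ from Remark \ref{rem:E<0_dec}.

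To compute $\tfrac{d}{d\sigma}E_{p,q}(v_\sigma)\big|_{\sigma=1}$ I would invoke two identities stemming from the hypotheses that $u$ solves \eqref{nlse} with $\lambda>0$ and minimizes the energy at mass $\mu$. Testing the weak form of \eqref{nlse} with $u$ itself (integrating $u''u$ by parts on each half-line and using the jump condition) yields the Nehari identity $A+B-C=-\lambda\mu$. On the other hand, the $L^2$--preserving rescaling $u_\tau(x):=\tau^{1/2}u(\tau x)$ keeps the mass equal to $\mu$ for every $\tau>0$; since $u$ is a constrained critical point, the admissible variation $\tau\mapsto E_{p,q}(u_\tau)$ has vanishing derivative at $\tau=1$, which produces the Pohozaev identity $2A+\bigl(1-\tfrac2p\bigr)B-C=0$.

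The decisive observation is that these two identities render the derivative negative \emph{for every} choice of $\alpha$. Indeed, collecting in $\tfrac{d}{d\sigma}E_{p,q}(v_\sigma)\big|_{\sigma=1}$ the terms proportional to $\alpha$, their coefficient is exactly $2A+\bigl(1-\tfrac2p\bigr)B-C$, which vanishes by Pohozaev; hence the derivative does not depend on $\alpha$ and equals $-\tfrac12A+\tfrac1p B=-\tfrac12\bigl(A-\tfrac2p B\bigr)$. Eliminating $C$ between the Nehari and Pohozaev identities gives $A-\tfrac2p B=\lambda\mu$, so that
$\tfrac{d}{d\sigma}E_{p,q}(v_\sigma)\big|_{\sigma=1}=-\tfrac12\lambda\mu<0$, using $\lambda>0$ and $\mu>0$. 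This is precisely the strict negativity sought above.

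The only nonroutine step is recognizing that the Pohozaev identity annihilates the $\alpha$--dependence of the derivative, so that no fine-tuning of the scaling exponent is required and the sign is dictated solely by $\lambda\mu>0$. Everything else is standard: justifying the two identities rigorously (in particular the vanishing of $\tfrac{d}{d\tau}E_{p,q}(u_\tau)\big|_{\tau=1}$ through the mass-preserving admissible variation, whose generator $\tfrac12u+xu'$ is $L^2$--orthogonal to $u$), and passing from the local strict decrease near $\mu$ to the global inequality $\EE_{p,q}(\mu)>\EE_{p,q}(\mu_1)$ via Remark \ref{rem:E<0_dec}.
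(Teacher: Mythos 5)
Your proof is correct, but it follows a genuinely different route from the paper's. The paper perturbs additively: it takes $v_t=u+t\varphi$ with a fixed $\varphi>0$, computes $\frac{d}{dt}E_{p,q}(v_t)\big|_{t=0^+}=-\lambda\int_\R u\varphi\,dx<0$ directly from the weak form of \eqref{nlse} (this requires the positivity of $u$, available from Lemma \ref{lem:statQual}), notes that $\|v_t\|_2^2\downarrow\mu$ as $t\to0^+$, and concludes exactly as you do by combining the local strict decrease with the monotonicity of Remark \ref{rem:E<0_dec}. Your argument instead moves along the mass-rescaling family of \eqref{eq:scalEst} and computes the derivative via the Nehari identity $A+B-C=-\lambda\mu$ (from the equation) and the Pohozaev identity $2A+\bigl(1-\tfrac2p\bigr)B-C=0$ (from minimality under the $L^2$-preserving dilation), obtaining $-\tfrac12\lambda\mu$ independently of $\alpha$; both identities are correctly derived and correctly combined. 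Two observations. First, your route needs no positivity of $u$ and actually computes the exact one-sided derivative of the ground-state energy level, so it anticipates formula \eqref{eq:derE}, $\EE_{p,q}'(\mu)=-\lambda(\mu)/2$, which the paper establishes later (by difference quotients along the pure amplitude rescaling) in the proof of Theorem \ref{thm:gs1}. Second, the Pohozaev step is dispensable: choosing $\alpha=\tfrac12$ makes the spatial dilation disappear, $v_\sigma=\sqrt{\sigma}\,u$, and then the derivative at $\sigma=1$ equals $\tfrac12(A+B-C)=-\tfrac12\lambda\mu$ by Nehari alone — so if you want the leanest version of your argument, fix $\alpha=\tfrac12$ from the start; the $\alpha$-independence you prove is then a pleasant consistency check rather than a needed ingredient.
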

\begin{proof}
	Note that, since $u$ is a ground state of $\EE_{p,q}(\mu)$, then by Lemma \ref{lem:statQual} (up to a change of sign) $u$ is a positive solution of \eqref{nlse} for some $\lambda\geq0$. Assume, in addition, that $\lambda\neq0$.

	Let, then, $v_t=u+t\varphi$ for every $t>0$, for a fixed $\varphi\in H^1(\mathbb R)$ such that $\varphi>0$. Clearly, $\displaystyle\mu_t:=\|v_t\|_2^2\downarrow\mu$ as $t\to0^+$ and, setting $f(t):=E_{p,q}(v_t)$ for every $t>0$, one has
	\[
	f'(t)=\int_\R (u' + t \varphi')\varphi' \dx + \int_\R (u+t\varphi)^{p-1}\varphi \dx - (u(0)+t\varphi(0))^{q-1}\varphi(0)\,,
	\]
	so that, by \eqref{nlse},
	\[
	\lim_{t\to0^+}f'(t)= \int_\R u^\prime \varphi^\prime dx + \int_\R u^{p-1}\varphi dx - u(0)^{q-1}\varphi(0)=-\lambda \int_\R u \varphi dx <0.
	\]
	Thus, there exists $\varepsilon>0$ such that
	\[
	\EE_{p,q}(\mu)=E_{p,q}(u)>E_{p,q}(v_t)\ge \EE_{p,q}(\mu_t),\qquad\forall t\in(0,\varepsilon).
	\]
	The result follows combining with Remark \ref{rem:E<0_dec}.
\end{proof}

\begin{lemma}
	\label{lem:critex}
	Let $q\neq\frac{p}{2}+1$ or $p<6$. If $\overline\mu>0$ is such that $-\infty<\EE_{p,q}(\overline\mu)<\EE_{p,q}(\mu)$ for every $\mu<\overline\mu$, then there exists a ground state of $E_{p,q}$ at mass $\overline{\mu}$.
\end{lemma}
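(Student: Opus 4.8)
The plan is to run a direct minimization, exploiting the fact that the focusing term in $E_{p,q}$ is concentrated at the single point $x=0$. First I would fix a minimizing sequence $(u_n)_n\subset H^1_{\overline\mu}(\R)$, with $E_{p,q}(u_n)\to\EE_{p,q}(\overline\mu)$, and prove that it is bounded in $H^1(\R)$. Since $\|u_n\|_2^2=\overline\mu$ is fixed, this amounts to controlling $\|u_n'\|_2$, and the only obstruction is the negative term $-\frac1q|u_n(0)|^q$. When $q<4$ this is immediate: by \eqref{eq:Gninf} one has $|u_n(0)|^q\le\overline\mu^{q/4}\|u_n'\|_2^{q/2}$ with $q/2<2$, so the point term is absorbed into $\frac12\|u_n'\|_2^2$ via Young's inequality. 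When $4\le q<\frac p2+1$ (which forces $p>6$) the sharp $L^\infty$ bound is no longer enough, and I would instead reproduce the interpolation estimate \eqref{proof.en.bound.2} from the proof of Lemma \ref{lem:Ebound}, controlling $\frac1q|u_n(0)|^q$ by $\frac16\|u_n'\|_2^2+\frac1{3p}\|u_n\|_p^p+C\|u_n\|_{L^q(-1,1)}^q$; using $q<p$ and Young's inequality to absorb the local $L^q$ term into $\|u_n\|_p^p$, the boundedness of $E_{p,q}(u_n)$ then yields a uniform bound on both $\|u_n'\|_2$ and $\|u_n\|_p$.

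Given boundedness, I would extract a subsequence with $u_n\deb u$ in $H^1(\R)$. The decisive structural remark is that the compact embedding $H^1(-1,1)\imm\imm C([-1,1])$ gives $u_n(0)\to u(0)$ along a further subsequence, so the point term is \emph{continuous} along the sequence: $|u_n(0)|^q\to|u(0)|^q$. Combining this with the weak lower semicontinuity of $v\mapsto\frac12\|v'\|_2^2+\frac1p\|v\|_p^p$, I obtain the crucial inequality
\[
E_{p,q}(u)\le\liminf_{n\to+\infty}E_{p,q}(u_n)=\EE_{p,q}(\overline\mu),
\]
which holds regardless of how much mass $u$ retains in the limit, precisely because the focusing contribution localizes at the origin.

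It then remains to show that no mass is lost, i.e. that $m:=\|u\|_2^2$ equals $\overline\mu$. Since $m\le\overline\mu$ by weak lower semicontinuity of the $L^2$ norm, I would rule out $m<\overline\mu$ by contradiction. If $0<m<\overline\mu$, then $u\in H^1_m(\R)$ gives $E_{p,q}(u)\ge\EE_{p,q}(m)$, whence $\EE_{p,q}(m)\le E_{p,q}(u)\le\EE_{p,q}(\overline\mu)$, contradicting the strict inequality $\EE_{p,q}(\overline\mu)<\EE_{p,q}(m)$ assumed in the statement. If $m=0$, then $u\equiv0$ and $E_{p,q}(u)=0$, so the displayed inequality forces $\EE_{p,q}(\overline\mu)\ge0$; but the hypothesis together with the non-positivity of $\EE_{p,q}$ (Remark \ref{rem:E<0_dec}) gives $\EE_{p,q}(\overline\mu)<\EE_{p,q}(\mu)\le0$ for any $\mu\in(0,\overline\mu)$, again a contradiction. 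Hence $m=\overline\mu$, so $u\in H^1_{\overline\mu}(\R)$ and the displayed inequality becomes $E_{p,q}(u)=\EE_{p,q}(\overline\mu)$, i.e. $u$ is a ground state at mass $\overline\mu$.

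I expect the main obstacle to be the boundedness step in the range $4\le q<\frac p2+1$, where the interplay between the defocusing $L^p$ term and the focusing point term cannot be resolved by a one-line Gagliardo--Nirenberg bound and must go through the interpolation/Young chain borrowed from Lemma \ref{lem:Ebound}; the role of the standing assumption $q\neq\frac p2+1$ or $p<6$ is exactly to keep us away from the degenerate boundary $p\ge6,\,q=\frac p2+1$, where these estimates (and the finiteness of $\EE_{p,q}$) break down. Once boundedness is secured, the continuity of the point term under weak convergence makes the exclusion of vanishing and dichotomy essentially automatic, so that no genuine concentration--compactness splitting is needed.
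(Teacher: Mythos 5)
Your proposal is correct, and it follows the same overall scheme as the paper — direct minimization, $H^1$-boundedness split into the cases $q<4$ (via \eqref{eq:Gninf}) and $4\le q<\frac p2+1$ (via the interpolation chain \eqref{proof.en.bound.2} of Lemma \ref{lem:Ebound}), weak limit, continuity of the point term, and exclusion of mass loss through the strict-monotonicity hypothesis — but differs in the compactness mechanism. The paper first replaces the minimizing sequence by its symmetric-decreasing rearrangement, so that each $u_n$ is even and non-increasing on $\R^+$; boundedness in $H^1(\R)$ then upgrades weak convergence to \emph{strong} convergence in $L^r(\R)$ for every $r\in(2,\infty]$, which handles the $L^p$ term and $u_n(0)$ simultaneously. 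You avoid rearrangement altogether: you recover $u_n(0)\to u(0)$ from the compact embedding $H^1(-1,1)\hookrightarrow\hookrightarrow C([-1,1])$, and you treat the defocusing $L^p$ term by weak lower semicontinuity only, which suffices precisely because that term has a favourable sign. Your route is somewhat more elementary (no rearrangement theory, no radial compactness lemma) and would survive in settings where rearrangement is unavailable; the paper's rearrangement buys strong $L^p$ convergence, which is not actually needed here. Your direct treatment of the case $m=0$ (contradiction with the non-positivity of $\EE_{p,q}$ from Remark \ref{rem:E<0_dec}) replaces the paper's preliminary step showing $u(0)\neq0$; both are equally valid.

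One point you should make explicit to close the argument: your boundedness step tacitly assumes that the hypotheses force $q<\max\left\{4,\frac p2+1\right\}$, since your two cases do not cover $q\geq\max\left\{4,\frac p2+1\right\}$ — in particular the region $p\in(2,6)$, $q=4$. The paper disposes of this in one line by invoking Theorem \ref{thm:propE}: if $q>\max\left\{4,\frac p2+1\right\}$ then $\EE_{p,q}\equiv-\infty$, contradicting the finiteness hypothesis; if $p\in(2,6)$ and $q=4$ then $\EE_{p,4}$ takes only the values $0$ and $-\infty$, so the strict-decrease hypothesis fails; and the remaining boundary cases $p\geq 6$, $q=\frac p2+1$ are ruled out by the standing assumption of the lemma. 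With that observation added, your argument is complete.
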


\begin{proof}
	Let $u_n\in H^1_{\overline{\mu}}(\mathbb R)$ be such that $E_{p,q}(u_n)\to \mathcal{E}_{p,q}(\overline{\mu})$. Observe that, by the standard theory of rearrangements (see e.g. \cite[Section 3]{AST_CVPDE}), we can assume, without loss of generality, $u_n$ to be even and non--increasing on $\R^+$. Moreover, by the assumptions on $\EE_{p,q}(\overline\mu)$, Theorem \ref{thm:propE} ensures that $ q<\max\left\{4,\frac p2+1\right\}$. Hence, arguing as in the proof of Theorem \ref{thm:propE} we obtain that $(u_n)_n$ is bounded in $H^1(\R)$, so that, up to subsequences, $u_n\rightharpoonup u$ in $H^1(\mathbb R)$ for some $u\in H^1(\R)$. Since $u_n$ is radially decreasing, this implies $u_n\to u$ in $L^r(\mathbb R)$ for any $r\in(2,\infty]$. In particular, $u_n(0)\to u(0)$, that together with
	\[
	0\geq\EE_{p,q}(\mu)>\mathcal{E}_{p,q}(\overline{\mu})=E_{p,q}(u_n) + o(1)\ge -\frac{1}{q}|u_n(0)|^q + o(1),\qquad\text{as}\quad n\to+\infty,
	\]
	(the first inequality coming from Remark \ref{rem:E<0_dec} and the second by assumption), yields $u(0)\neq0$.
	
	Set, then, $m:=\|u\|_2^2$, so that by lower--semicontinuity $0<m\le \overline{\mu}$. We have
	\begin{multline*} \mathcal{E}_{p,q}(\overline{\mu})=\lim_{n\to+\infty}E_{p,q}(u_n)=\lim_{n\to+\infty}\left(\frac{1}{2}\|u_n'\|_2^2 + \frac{1}{p}\|u_n\|_p^p-\frac{1}{q}|u_n(0)|^q\right)\\
	\geq \liminf_{n\to+\infty} \frac{1}{2}\|u_n'\|_2^2 + \frac{1}{p}\|u\|_p^p - \frac{1}{q}|u(0)|^q  = E_{p,q}(u) + \liminf_{n\to+\infty} \frac{1}{2}\left(\|u_n^\prime\|_2^2 - \|u^\prime\|_2^2\right)\ge \mathcal{E}_{p,q}(m), 
	\end{multline*}
	the last inequality coming again by lower semicontinuity. Since, by assumption, $\EE_{p,q}(\overline\mu)<\EE_{p,q}(\mu)$ for every $\mu<\overline{\mu}$, it follows that $m=\overline{\mu}$. Hence, $u\in H_{\overline\mu}^1(\R)$ and all the above inequalities are in fact equalities, so that $E_{p,q}(u)=\EE_{p,q}(\overline\mu)$, i.e. $u$ is a ground state at mass $\overline{\mu}$.
\end{proof}

\section{Ground states: proof of Theorems \ref{thm:gs1}--\ref{thm:gs2}--\ref{thm:gs3}}
\label{sec:gs}
This section provides the proofs of the main results of the paper concerning ground states of $E_{p,q}$. We begin with Theorem \ref{thm:gs1}, namely the case
\begin{equation}
\label{eq:pq1}
p>2\qquad\text{and}\qquad 2<q<\min\left\{4,\frac{p}{2}+1\right\}\,.
\end{equation}
Before proving Theorem \ref{thm:gs1} we state the next lemma, showing that for the nonlinearity powers as in \eqref{eq:pq1} the ground state energy level is bounded uniformly in $\mu$.
\begin{lemma}
	\label{lem:limE}
	Let $p,q$ satisfy \eqref{eq:pq1}. Then
	\[
	\lim_{\mu\to+\infty}\EE_{p,q}(\mu)>-\infty\,.
	\]
\end{lemma}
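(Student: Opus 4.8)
I want to show that $\EE_{p,q}(\mu)$ stays bounded below as $\mu\to+\infty$, in the regime $p>2$ and $2<q<\min\{4,\tfrac p2+1\}$. Since Remark \ref{rem:E<0_dec} already gives that $\EE_{p,q}$ is non--positive and non--increasing, the limit exists in $[-\infty,0]$, and the whole point is to rule out the value $-\infty$. The natural strategy is to produce a lower bound on $E_{p,q}(u)$, valid for all $u\in H^1(\R)$, that does \emph{not} degenerate as the mass grows. The term to control is the focusing point contribution $-\tfrac1q|u(0)|^q$, and the only tool available is the pointwise estimate \eqref{eq:Gninf}, namely $|u(0)|^2\le\|u\|_\infty^2\le\|u\|_2\|u'\|_2$. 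The key observation is that $q<\tfrac p2+1$ should allow me to dominate $|u(0)|^q$ by a combination of the kinetic and defocusing potential terms, independently of the mass.

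\textbf{Key steps.} First I would insert \eqref{eq:Gninf} to write $|u(0)|^q\le(\|u\|_2\|u'\|_2)^{q/2}$. The difficulty is that $\|u\|_2^2=\mu$ may be large, so bounding $|u(0)|^q$ purely in terms of $\|u'\|_2$ will bring in a $\mu$--dependent factor. The remedy is to \emph{also} exploit the positive defocusing term $\tfrac1p\|u\|_p^p$. I would interpolate: the appearance of $|u(0)|$ is a local phenomenon near the origin, so I expect to control $|u(0)|^q$ by the $L^p$ and $L^2$ mass \emph{of $u$ near the origin} together with $\|u'\|_2$, in the spirit of the estimates in the proof of Lemma \ref{lem:Ebound}. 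Concretely, combining \eqref{eq:Gninf} with the fundamental--theorem--of--calculus bound $\bigl|2|u(0)|^q-\int_{-1}^1|u|^q\bigr|\le 2q\|u'\|_{L^2(-1,1)}\|u\|_{L^{2(q-1)}(-1,1)}^{q-1}$ and Young's inequality, one obtains a bound of the form $|u(0)|^q\le \eta\,\|u'\|_2^2+\eta\,\|u\|_p^p+C_\eta$, where the local $L^p$ and $L^q$ contributions are absorbed into the global $\tfrac1p\|u\|_p^p$ using $q<p$ and the interpolation exponents computed exactly as in Lemma \ref{lem:Ebound}. Choosing $\eta$ small enough that the kinetic and $L^p$ terms are not fully eaten then yields $E_{p,q}(u)\ge -C$ for a constant $C=C(p,q)$ independent of $\mu$, which is precisely the claim.

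\textbf{Main obstacle.} The delicate point is ensuring that the constant produced at the end genuinely does \emph{not} depend on $\mu$. The estimate $|u(0)|^2\le\|u\|_2\|u'\|_2$ is mass--sensitive, so I must avoid routing the whole bound through it; instead the localization to $(-1,1)$ is essential, because it replaces the global $\|u\|_2$ by a locally--controlled quantity that can be absorbed into $\|u\|_p^p$ via the interpolation inequality \eqref{eq:theta}, exploiting the strict inequalities $q<4$ and $q<\tfrac p2+1$ (equivalently $2(q-1)<p$). The condition $q<\tfrac p2+1$ is exactly what makes the exponent $2(1-\theta)(q-1)$ strictly less than $p$, so Young's inequality can absorb the $L^p$ factor with a small coefficient; and $q<4$ guarantees the analogous balancing for the lower--order terms. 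Once these absorptions are arranged so that the residual constant is purely a function of the fixed exponents $p,q$, passing to the infimum over $H_\mu^1(\R)$ and then letting $\mu\to+\infty$ finishes the argument.
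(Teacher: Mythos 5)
Your proof is correct, but it takes a genuinely different route from the paper's. The paper argues by contradiction through its variational and ODE machinery: if $\EE_{p,q}(\mu)\to-\infty$, monotonicity yields masses $\mu_n\to+\infty$ at which the existence criterion of Lemma \ref{lem:critex} applies, so ground states $u_n\in H^1_{\mu_n}(\R)$ exist with $E_{p,q}(u_n)\to-\infty$ and hence $u_n(0)\to+\infty$; but each $u_n$ solves \eqref{nlse} with a Lagrange multiplier $\lambda_n\geq0$ (Lemma \ref{lem:statQual}), and the jump condition together with the pointwise identity \eqref{eq:emec} forces $\tfrac14 u_n(0)^{2(q-1)}=\lambda_n u_n(0)^2+\tfrac2p u_n(0)^p\geq\tfrac2p u_n(0)^p$, i.e. $u_n(0)^{p+2-2q}\leq p/8$ (this is exactly where $q<\tfrac p2+1$ enters), contradicting $u_n(0)\to+\infty$. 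You instead prove the uniform statement that $E_{p,q}(u)\geq -C(p,q)$ on the whole of $H^1(\R)$, independently of the mass, by pushing the localization--interpolation--Young scheme of Lemma \ref{lem:Ebound} one step further: after absorbing into small multiples of $\|u'\|_2^2$ and $\|u\|_p^p$, the leftover local term $C\|u\|_{L^q(-1,1)}^q\leq C'\|u\|_p^q$ is itself absorbed by Young's inequality (using $q<p$) at the price of a constant depending only on $p,q$; since none of the constants sees $\mu$, taking the infimum over $H^1_\mu(\R)$ gives $\EE_{p,q}(\mu)\geq -C$ for every $\mu$, which suffices. Both arguments are sound; what each buys is different. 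Yours is purely inequality-based and self-contained: it requires neither the existence criterion of Lemma \ref{lem:critex}, nor the solution theory of Section \ref{sec:stat}, nor the sign of the multiplier, and it in fact uses only $2<q<\tfrac p2+1$ --- the hypothesis $q<4$ plays no role in your absorption scheme (your remark attributing a balancing role to it is inaccurate but harmless, as is the superfluous mention of \eqref{eq:Gninf} in your key steps, which your own discussion of the main obstacle correctly explains must be avoided); this greater generality is consistent with the paper's reuse of the lemma for $4\leq q<\tfrac p2+1$ in the proof of Theorem \ref{thm:gs2}. The paper's route, by contrast, is shorter given the machinery already in place and yields the quantitative $L^\infty$ bound \eqref{eq-u0lim} on ground states, which is what underlies the observation in the introduction that ground states remain uniformly bounded as $\mu\to+\infty$.
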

\begin{proof}
	Assume by contradiction that $\displaystyle\lim_{\mu\to+\infty}\EE_{p,q}(\mu)=-\infty$ (existence of such limit is guaranteed by the monotonicity of $\EE_{p,q}$). Then, there exists a sequence $(\mu_n)_n$ such that $\mu_n\to+\infty$ and such that $\EE_{p,q}(\mu_n)<\EE_{p,q}(\mu)$ for every $\mu<\mu_n$ and for every $n$. Lemma \ref{lem:critex} thus implies that there exists $(u_n)_n$ such that $u_n\in H_{\mu_n}^1(\R)$ and $E_{p,q}(u_n)=\EE_{p,q}(\mu_n)$ for every $n$. Since $\EE_{p,q}(\mu_n)\to-\infty$, it follows that $u_n(0)\to+\infty$. However, since $u_n$ is a ground state of $E_{p,q}$ at mass $\mu_n$, then it satisfies \eqref{nlse} for some $\lambda_n\geq0$ (see Lemma \ref{lem:statQual}). Combining the boundary condition in \eqref{nlse} and \eqref{eq:emec} with the radial symmetry and positivity of $u_n$ one obtains
	\[
	\frac{u_n(0)^{2(q-1)}}{4}=\lambda_n u_n(0)^2+\frac2p u_n(0)^p.
	\]
	Thus, since $\lambda_n$ is non-negative, 
	\begin{equation}
	\label{eq-u0lim}
	u_n(0)^{p+2-2q}\leq\frac p8,
	\end{equation}
	which then contradicts $u_n(0)\to+\infty$ and concludes the proof.
\end{proof}

\begin{proof}[Proof of Theorem \ref{thm:gs1}]
	As a preliminary step, observe that $\EE_{p,q}(\mu)<0$ for every $\mu>0$. Indeed, taking e.g. $u(x)=\delta e^{-\delta^2|x|}$, for every $x\in\R$ and for a suitable $\delta>0$ to be chosen, and setting
	\[
	v(x)=\mu^{\frac1{4-q}}u\left(\mu^{\frac{q-2}{4-q}}x\right),
	\]
	it holds that $v\in H_\mu^1(\R)$ and, since $ q<\min\left\{4,\frac p2+1\right\}$ implies $0<\frac{q}{4-q}<\frac{p+2-q}{4-q}$, that $\EE_{p,q}(\mu)\leq E_{p,q}(v)<0$ as soon as $\delta$ and $\mu$ are sufficiently close to zero. This proves that $\EE_{p,q}(\mu)$ is strictly negative in a right neighborhood of $0$, and thus for every $\mu>0$ by Remark \ref{rem:E<0_dec}.
	
	Now, since $\EE_{p,q}(0)=0$ and $\EE_{p,q}(\mu)<0$ for every $\mu>0$, continuity and again Remark \ref{rem:E<0_dec} imply that either $\EE_{p,q}(\mu)$ is strictly decreasing on $[0,+\infty)$, or there exists at least one value $\widehat\mu>0$ such that $\EE_{p,q}(\widehat{\mu})<\EE_{p,q}(\mu)$ for every $\mu<\widehat{\mu}$ and $\EE_{p,q}(\widehat{\mu})=\EE_{p,q}(\mu)$ for every $\mu$ in a right neighborhood of $\widehat{\mu}$. In the latter case, by Lemma \ref{lem:critex}, there exists $u\in H_{\widehat{\mu}}^1(\R)$ such that $E_{p,q}(u)=\EE_{p,q}(\widehat{\mu})$. Moreover, by Lemma \ref{lem:Estretdec}, since $\EE_{p,q}$ is locally constant on a right neighbourhood of $\widehat{\mu}$, such $u$ must solve \eqref{nlse} with $\lambda=0$. However, by Lemma \ref{prop:l=0}, when $p\geq6$ this is impossible, whereas when $p\in(2,6)$ it can happen only if $\widehat{\mu}=\mu_0$, where $\mu_0>0$ is the mass of the unique positive solution in $H^1(\R)$ of \eqref{nls-nomass} with $\lambda=0$ defined by \eqref{eq-mu0}.
	
	As a consequence, in the case
	\[
	p\geq 6\qquad\text{and}\qquad 2<q <4,
	\]
	the ground state energy level $\EE_{p,q}$ is strictly decreasing on $(0,+\infty)$, so that ground states exist for every $\mu>0$ by Lemma \ref{lem:critex}. Moreover, since Lemma \ref{lem:mon_mu} ensures that, for this choice of $p,q$, there exists a unique positive solution of \eqref{nlse} for every $\mu>0$, it follows that (up to a change of sign) the ground state of $E_{p,q}$ is unique for every $\mu>0$. In view of Lemma \ref{lem:limE}, this completes the proof of Theorem \ref{thm:gs1}(ii).
	
	Conversely, when 
	\[
	p\in(2,6)\qquad\text{and}\qquad 2<q<\frac p2+1,
	\]
	the previous argument implies that there exists at most one value of $\mu>0$ such that $\EE_{p,q}$ is locally constant on a right neighborhood of $\mu$ and that, if such a value of the mass exists, then it coincides with $\mu_0$ defined by \eqref{eq-mu0}. Note also that, by Theorem \ref{thm:exsol} (in view of Lemmas \ref{lem:mu_as}--\ref{lem:mon_mu}), in this regime of $p,\,q$ \eqref{nlse} admits a positive solution if and only if $\mu\in(0,\mu_0]$. In particular, this implies that there exist no ground state of $E_{p,q}$ with mass larger than $\mu_0$. By Lemma \ref{lem:critex}, it entails that $\EE_{p,q}$ cannot be strictly decreasing for large masses, i.e. $\displaystyle \EE_{p,q}(\mu)=\lim_{\nu\to+\infty}\EE_{p,q}(\nu)$ for every $\mu$ large enough. However, this means that there exists at least one value of $\mu>0$ such that $\EE_{p,q}(\mu)$ is locally constant on a right neighborhood of $\mu$, so that we conclude that $\EE_{p,q}$ is strictly decreasing on $(0,\mu_0]$ and it is constant on $(\mu_0,+\infty)$. By Lemma \ref{lem:critex}, this yields existence of ground states for every $\mu\leq\mu_0$, and such ground states are unique (up to a change of sign) again by Theorem \ref{thm:exsol}. Note that, by Lemmas \ref{lem:mu_as}--\ref{lem:mon_mu}, the threshold value $\mu_0$ coincides with the value $\mu_{p,q}$ in Theorem \ref{thm:exsol}(i), which completes the proof of Theorem \ref{thm:gs1}(i). 
	
	It is thus left to show that, whenever $p>2$ and $ 2<q < \min\left\{4,\frac p2+1\right\}$, there exists $\overline\mu_{p,q}>0$ so that $\EE_{p,q}$ is concave on $(0,\overline{\mu}_{p,q}]$ and convex on $[\overline{\mu}_{p,q},+\infty)$. To this end, note first that, since we already proved that positive ground states of $E_{p,q}$ at mass $\mu$ are unique whenever they exist, the map $\mu\mapsto\lambda(\mu)$ given by
	\begin{equation}
		\label{eq:l mu}
		\lambda(\mu):=\frac{u_\mu(0)^q-\|u_\mu'\|_2^2-\|u_\mu\|_p^p}{\mu}\,,
	\end{equation}
	where $u_\mu\in H_\mu^1(\R)$ is such that $E_{p,q}(u_\mu)=\EE_{p,q}(\mu)$, is well defined on $(0,\mu_0]$ when $p\in(2,6)$ and on $(0,+\infty)$ when $p\geq 6$. By definition, $u_\mu$ solves \eqref{nlse} with $\lambda=\lambda(\mu)$. Now, we claim that, whenever $\lambda(\mu)$ is well--defined, it satisfies
	\begin{equation}
	\label{eq:derE}
	\EE_{p,q}'(\mu)=-\frac{\lambda(\mu)}2\,.
	\end{equation}
	Indeed, for $\varepsilon>0$ we have
	\begin{multline*} 
	\frac{\mathcal{E}_{p,q}(\mu+\varepsilon)-\mathcal{E}_{p,q}(\mu)}{\varepsilon}\le \frac{E_{p,q}\left(\sqrt{\frac{\mu+\varepsilon}{\mu}}u_\mu\right)-E_{p,q}(u_\mu)}{\varepsilon} \\[.2cm]
	= \frac{\frac{1}{2}\left(\frac{\mu+\varepsilon}{\mu}-1\right)\|u_\mu'\|_2^2 + \frac{1}{p}\left(\left(\frac{\mu+\varepsilon}{\mu}\right)^{p/2}-1\right)\|u_\mu\|_p^p - \frac{1}{q}\left(\left(\frac{\mu+\varepsilon}{\mu}\right)^{q/2}-1\right)u_\mu(0)^q}{\varepsilon}\\[.2cm]
	=\frac{\frac{\|u_\mu'\|_2^2+\|u_\mu\|_p^p-u_\mu(0)^q}{2\mu}\varepsilon+o(\varepsilon)}{\varepsilon} =-\frac{\lambda(\mu)}{2}+o(1),\qquad\text{as}\quad\varepsilon\to0^+,
	\end{multline*}
	so that
	\[
	 \limsup_{\varepsilon\to0^+}\frac{\mathcal{E}_{p,q}(\mu+\varepsilon)-\mathcal{E}_{p,q}(\mu)}{\varepsilon}\le-\frac{\lambda(\mu)}{2}\,.
	\]
	Similarly,
	\begin{multline*} 
	\frac{\mathcal{E}_{p,q}(\mu+\varepsilon)-\mathcal{E}_{p,q}(\mu)}{\varepsilon}\ge \frac{E_{p,q}(u_{\mu+\varepsilon}) - E\left(\sqrt{\frac{\mu}{\mu+\varepsilon}}u_{\mu+\varepsilon}\right)}{\varepsilon}\\
	= \frac{\frac{1}{2}\left(1-\frac{\mu}{\mu+\varepsilon}\right)\|u_{\mu+\varepsilon}'\|_2^2 + \frac{1}{p}\left(1-\left(\frac{\mu}{\mu+\varepsilon}\right)^{p/2}\right)\|u_{\mu+\varepsilon}\|_p^p - \frac{1}{q}\left(1-\left(\frac{\mu}{\mu+\varepsilon}\right)^{q/2}\right)u_{\mu+\varepsilon}(0)^q}{\varepsilon}\\
	=\frac{\frac{\|u_{\mu+\varepsilon}'\|_2^2+\|u_{\mu+\varepsilon}\|_p^p-u_{\mu+\varepsilon}(0)^q}{2(\mu+\varepsilon)}\varepsilon+o(\varepsilon)}{\varepsilon} =-\frac{\lambda(\mu)}{2}+o(1)\qquad\text{as}\quad\varepsilon\to0^+\,,
	\end{multline*}
	where we exploited the strong continuity in $H^1(\R)$ of $u_{\mu+\varepsilon}$ as $\varepsilon\to0$ given by Remark \ref{rem:cont_sol}. Passing to the liminf as $\varepsilon\to0^+$ and coupling the previous estimates yield
	\[ \lim_{\varepsilon\to0^+}\frac{\mathcal{E}_{p,q}(\mu+\varepsilon)-\mathcal{E}_{p,q}(\mu)}{\varepsilon}=-\frac{\lambda(\mu)}{2}\,.
	\]
	Since the analogous computations can be done for $\varepsilon\to0^-$, we obtain \eqref{eq:derE}.
	
	In view of \eqref{eq:derE}, the concavity/convexity properties of $\EE_{p,q}$ can be discussed through the monotonicity properties of $\lambda(\mu)$. To this aim, recall that, since $u_\mu$ solves \eqref{nlse} with $\lambda=\lambda(\mu)$, it corresponds to the unique value $t\in(1,+\infty]$ given by \eqref{eq:condt} (recall that $t=+\infty$ represents, when present, the solution with $\lambda=0$, i.e. $\mu=\mu_0$). Thus $\lambda(\mu)$ can be equivalently seen as a function $\lambda(t)$ for $t\in(1,+\infty]$ (it is sufficient to invert $g$ in \eqref{eq:condt}) and, by the proof of Lemma \ref{prop:l>0}, when $ q<\frac p2+1$ the map $t\mapsto\lambda(t)$ has been shown to be increasing on (1,$\overline{t})$ and decreasing on $(\overline{t},+\infty)$, for some $\overline{t}>1$. Moreover, Lemma \ref{lem:mon_mu} ensures that the mass $\mu$ of $u_\mu$ can be rewritten as a function $\mu(t)$ that is strictly increasing on its domain for $p,\,q$ satisfying \eqref{eq:pq1}. Hence, the inverse function $t(\mu)$ is well-defined and strictly increasing on its domain, in turn implying that there exists $\overline{\mu}_{p,q}>0$ such that the map $\lambda(\mu)=\lambda(t(\mu))$ is increasing on $(0,\overline{\mu}_{p,q})$ and decreasing for $\mu>\overline\mu_{p,q}$.
\end{proof}

\begin{remark}
	Note that, when $p\in(2,6)$, the final part of the argument for the proof of the concavity/convexity features of $\EE_{p,q}$ in Theorem \ref{thm:gs1} only applies, in fact, to masses in the interval $[0,\mu_{p,q}]$. However, as $\EE_{p,q}$ is constant for $\mu\geq\mu_{p,q}$, this is enough to conclude.
\end{remark}

Let us focus, now, to Theorem \ref{thm:gs2}, where
\begin{equation}
	\label{eq:pq2}
	p\in(2,6)\quad\text{and}\quad\frac p2+1<q<4,\qquad\text{or}\qquad p\geq6\quad\text{and}\quad4\leq q <\frac p2+1.
\end{equation}

\begin{proof}[Proof of Theorem \ref{thm:gs2}]
	Preliminarily, set
	\[
	\widetilde{\mu}_{p,q}:=\sup\left\{\mu>0\,:\,\EE_{p,q}(\mu)=0\right\}\,.
	\]
	Observe first that, for $p,\,q$ as in \eqref{eq:pq2}, we have $\widetilde{\mu}_{p,q}>0$. Indeed, if this were not the case, there would exists a sequence $(\mu_n)_n$ such that $\mu_n\downarrow0$ and $\EE_{p,q}(\mu_n)<\EE_{p,q}(\mu)\leq0$ for every $\mu<\mu_n$ and for every $n$. By Lemma \ref{lem:critex}, this would yield the existence of a sequence of ground states $(u_n)_n$ at mass $\mu_n$, so that $(u_n)_n$ would be a sequence of solutions of \eqref{nlse} with mass $\mu_n\to0$. Since this is impossible by Theorem \ref{thm:exsol}(iii)-(v), it follows that $\widetilde{\mu}_{p,q}>0$.
	
	Let us then show that $\widetilde{\mu}_{p,q}<+\infty$. To this end, take $u(x)=\delta e^{-\delta^2|x|}$ and $v(x)=\mu^\alpha u(\mu^{2\alpha-1}x)$, for suitable $\alpha$, $\delta$ to be chosen, so that $v\in H_\mu^1(\R)$ and 
	\[
	E_{p,q}(v)=\frac{\delta^4}{2}\mu^{4\alpha-1}+\frac2{p^2}\delta^{p-2}\mu^{\alpha(p-2)+1}-\frac{\delta^q}{q}\mu^{\alpha q}\,.
	\]
	It is then easy to see that, for $p,q$ as in \eqref{eq:pq2}, there always exist a choice of $\alpha$ and $\delta$ for which $E_{p,q}(v)<0$ for a fixed $\mu$ large enough. Indeed, it is enough to take
	\[
	\left\{
	\begin{array}{ll}
	\displaystyle \alpha=\frac1{4-q},\quad\delta>0\quad\text{small enough},  & \quad\text{if}\quad p\in(2,6),\:\frac p2+1 < q < 4,\\[.4cm]
	\displaystyle-\frac1{q-4}< \alpha <- \frac1{p-2-q},\quad\delta=1,  & \quad\text{if}\quad p>6,\:4< q <\frac p2+1,\\[.4cm]
	\displaystyle\alpha<-\frac1{p-6},\quad\delta=1,  & \quad\text{if}\quad p>6,\:q=4.
	\end{array}
	\right.
	\]
	Moreover, this also gives $\displaystyle\lim_{\mu\to+\infty}\EE_{p,q}(\mu)=-\infty$ when $p\in(2,6)$ and $ \frac p2+1<q<4$. Conversely, when $p>6$ and $ 4\leq q <\frac p2+1$, arguing exactly as in the proof of Lemma \ref{lem:limE} (note that to make the crucial estimate \eqref{eq-u0lim} significant it sufficies that $q<\frac{p}{2}+1$) one obtains $\displaystyle\lim_{\mu\to+\infty}\EE_{p,q}(\mu)>-\infty$. 
	
	Now, we prove that $\EE_{p,q}$ is strictly decreasing on $[\widetilde{\mu}_{p,q},+\infty)$. When $p>6$, this is immediate because, if $\EE_{p,q}$ were locally constant on a right neighbourhood of any $\mu>\widetilde{\mu}_{p,q}$, then by Lemmas \ref{lem:Estretdec}--\ref{lem:critex} there would exist a ground state of $E_{p,q}$ solving \eqref{nlse} with $\lambda=0$, but this is impossible by Theorem \ref{thm:nomass}(ii). Conversely, if $p\in(2,6)$ and $ \frac p2+1<q < 4$, then, taking $\widetilde{\mu}_{p,q}<\mu_1<\mu_2$ and using \eqref{eq:scalEst} with $\alpha=\frac1{4-q}$ yield
	\begin{equation}
	\label{eq:Emu12}
	\EE_{p,q}(\mu_2)<\left(\frac{\mu_2}{\mu_1}\right)^{\frac q{4-q}}E_{p,q}(u),\qquad\forall u\in H_{\mu_1}^1(\R)\,,
	\end{equation}
	and passing to the infimum over $u\in H_{\mu_1}^1(\R)$ (and using that $\EE_{p,q}(\mu_1)<0$) gives $\EE_{p,q}(\mu_2)<\EE_{p,q}(\mu_1)$ as claimed.
	
	Hence, all the properties of $\EE_{p,q}$ listed in Theorem \ref{thm:gs2} are proved. Moreover, by Lemma \ref{lem:critex}, the strict monotonicity of $\EE_{p,q}$ on $[\widetilde{\mu}_{p,q},+\infty)$ implies that ground states exist for every $\mu>\widetilde{\mu}_{p,q}$. On the contrary, no ground state exists when $\mu<\widetilde{\mu}_{p,q}$, because if $u\in H_\mu^1(\R)$ were such a ground state, then it would be $E_{p,q}(u)=0$ that, together with \eqref{eq:Emu12}, would imply  $\EE_{p,q}(\overline{\mu})<0$ for some $\overline{\mu}\in(\mu,\widetilde{\mu}_{p,q})$, contradicting the definition of $\widetilde{\mu}_{p,q}$.
	
	Moreover, for $q\neq4$, ground states exist also when $\mu=\widetilde{\mu}_{p,q}$. This is a direct consequence of Remark \ref{rem:cont_sol} and Theorem \ref{thm:exsol}. Indeed, let $\mu_n\downarrow\widetilde{\mu}_{p,q}$ and $u_n\in H_{\mu_n}^1(\R)$ be such that $E_{p,q}(u_n)=\EE_{p,q}(\mu_n)$ for every $n$. Then, $u_n$ solves \eqref{nlse} and, by Remark \ref{rem:cont_sol}, converges strongly in $H^1(\R)$ to a solution $u\in H_{\widetilde{\mu}_{p,q}}^1(\R)$ of \eqref{nlse} such that $\displaystyle E_{p,q}(u)=\lim_{n\to+\infty}E_{p,q}(u_n)=\lim_{n\to+\infty}\EE_{p,q}(\mu_n)=0$, i.e. $u$ is a ground state of $E_{p,q}$ in $H_{\widetilde{\mu}_{p,q}}^1(\R)$. Note that here the actual value of $\widetilde{\mu}_{p,q}$ does not play any role. Since Theorem \ref{thm:exsol}(iii) guarantees that \eqref{nlse} admits a solution if and only if $\mu\geq \mu_{p,q}>0$, we only know that $\widetilde{\mu}_{p,q}\geq \mu_{p,q}$. 
	
	We are thus left to discuss the case $q=4$ and $\mu=\widetilde{\mu}_{p,4}$. Note that, by Theorem \ref{thm:exsol}(v), in order to prove that in this case ground states do not exists at the critical value of the mass it is enough to show that $\widetilde{\mu}_{p,4}=2$. Actually, in view of the previous results, it is sufficient to establish $\widetilde{\mu}_{p,4}\leq2$, since $\widetilde{\mu}_{p,4}<2$ is already ruled out again by Theorem \ref{thm:exsol}(v). We do this by showing that $\EE_{p,4}(\mu)<0$ for every $\mu>2$. Indeed, by \cite[Theorem 1.2]{BD21}, for every $\mu>2$ there exists $u\in H_\mu^1(\R)$ such that
	\begin{equation}
	\label{eq:Du<0}
	\frac12\|u'\|_2^2-\frac14|u(0)|^4<0.
	\end{equation}
	Given $\sigma>0$, set then $u_\sigma(x):=\sqrt{\sigma}u(\sigma\,x)$ for every $x\in \R$, so that $u_\sigma\in H_\mu^1(\R)$ and
	\[
	E_{p,4}(u_\sigma)=\sigma^2\left(\frac12\|u'\|_2^2-\frac14|u(0)|^4\right)+\frac{\sigma^{\frac p2-1}}p\|u\|_p^p\,,
	\]	
	that by \eqref{eq:Du<0} and $p>6$ yields $E_{p,4}(u_\sigma)<0$ as soon as $\sigma$ is sufficiently close to zero.
	
	To conclude, observe that the uniqueness (up to a change of sign) of ground states of $E_{p,4}$ at mass $\mu>2$ follows by the uniqueness results for solutions of \eqref{nlse} given again by Theorem \ref{thm:exsol}.
\end{proof}

\begin{proof}[Proof of Theorem \ref{thm:gs3}]
	Let $p>2$, $ q=\frac p2+1$, and $\mu>0$. By Theorem \ref{thm:exsol}(vi) \eqref{nlse} has no solution if $p\leq8$. Let then $p>8$. In this setting, again by Theorem \ref{thm:exsol}(vi), \eqref{nlse} admits a unique positive solution. Since by Theorem \ref{thm:propE} it is always true that $\EE_{p,q}(\mu)\leq0$, to prove that $\EE_{p,q}(\mu)$ is never attained it is enough to show that the energy of such solution is always strictly positive. To do this, recall that, by the proof of Lemma \ref{prop:l>0}, each solution in $H^1(\R)$ of \eqref{nls-nomass} corresponds to one and only one value $t\in(1+\infty)$ via \eqref{eq:condt}. Denoting by $u_t$ the solution corresponding to $t\in(1,+\infty)$, we now exploit the explicit formula \eqref{eq:exp_ul>0} to compute $E_{p,q}(u_t)$. Observe first that a direct computation yields
	\[
	\|u_t'\|_2^2= \frac{2^\frac{2p-6}{p-2}\lambda^\frac{p+2}{2(p-2)}p^\frac{2}{p-2}}{p-2}\int_1^t s^2(s^2-1)^\frac{4-p}{p-2}ds\,. 
	\]
	Since
	\[
	\int_1^t s^2(s^2-1)^\frac{4-p}{p-2}ds=\int_1^t(s^2-1)^\frac{2}{p-2}ds + \int_1^t(s^2-1)^\frac{4-p}{p-2}ds,
	\]
	and an integration by parts yields
	\[
	\int_1^t (s^2-1)^\frac{2}{p-2}ds=t(t^2-1)^\frac{2}{p-2} - \frac{4}{p-2}\int_1^t s^2(s^2-1)^\frac{4-p}{p-2}ds,
	\]
	one has
	\begin{equation}
	\label{int}
	\int_1^t s^2(s^2-1)^\frac{4-p}{p-2}ds=\frac{p-2}{p+2}\left[t(t^2-1)^\frac{2}{p-2} + I(t)\right],
	\end{equation}
	where $I(t)$ is the function defined in \eqref{eq:It}. Hence, by \eqref{int},
	\[ \frac{1}{2}\|u_t'\|_2^2=\frac{2^\frac{p-4}{p-2}p^\frac{2}{p-2}\lambda^\frac{p+2}{2(p-2)}}{p+2}\left(t(t^2-1)^\frac{2}{p-2}+I(t)\right).
	\]
	Similarly, using again \eqref{eq:exp_ul>0} and \eqref{int} we obtain
	\[ \frac{1}{p}\|u_t\|_p^p=\frac{2^\frac{p-4}{p-2}\lambda^\frac{p+2}{2(p-2)}p^\frac{2}{p-2}}{p-2}\int_1^t(s^2-1)^\frac{2}{p-2}ds=\frac{2^\frac{p-4}{p-2}\lambda^\frac{p+2}{2(p-2)}p^\frac{2}{p-2}}{p+2}\left(t(t^2-1)^\frac{2}{p-2}-\frac{4}{p-2}I(t)\right),
	\]
	and  
	\[ \frac{1}{q}|u_t(0)|^q=\frac{1}{q}\left(\frac{p\lambda}{2}\right)^\frac{q}{p-2}(t^2-1)^\frac{q}{p-2}\,,
	\] 
	so that
	\[
	\label{en.for.1}
	E_{p,q}(u_t)=\lambda^\frac{q}{p-2}(p(t^2-1))^\frac{2}{p-2}
	\left(\frac{2^\frac{p-4}{p-2}\lambda^\frac{p+2-2q}{2(p-2)}}{p+2}\left(2t +\frac{p-6}{p-2}\frac{I(t)}{(t^2-1)^\frac{2}{p-2}}\right)-\frac{p^\frac{q-2}{p-2}}{q2^\frac{q}{p-2}}(t^2-1)^\frac{q-2}{p-2}\right).
	\]
	Taking $q=\frac p2+1$ and recalling \eqref{eq:fgdiag}, the previous formula becomes
	\[
	E_{p,q}(u_t)=\frac{2^\frac{p-4}{p-2}p^\frac{2}{p-2}\lambda^\frac{p+2}{2(p-2)}(t^2-1)^{\frac{2}{p-2}}}{(p+2)(p-2)}I(t)>0
	\]
	and we conclude.
\end{proof}

\section*{Statements and Declarations}

\noindent\textbf{Conflict of interest.}  The authors declare that they have  no conflict of interest.
\bigskip 

\noindent\textbf{Data availability statement.} Data sharing not applicable to this article as no dataset was generated or analysed during the current study.

\bigskip

\noindent\textbf{Acknowledgements.} D.B, S.D. and L.T. acknowledge that this study was carried out within the project E53D23005450006 ``Nonlinear dispersive equations in presence of singularities'' -- funded by European Union -- Next Generation EU within the PRIN 2022 program (D.D. 104 - 02/02/2022 Ministero dell'Universit\`a e della Ricerca). This manuscript reflects only the author's views and opinions and the Ministry cannot be considered responsible for them.

\end{document}